\documentclass[12pt]{amsart}
\usepackage{amssymb}
\usepackage{epsfig}
\usepackage{graphicx}

\theoremstyle{plain}
\newtheorem{prop}{Proposition}
\newtheorem{theo}[prop]{Theorem}
\newtheorem{coro}[prop]{Corollary}

\newtheorem{lemm}[prop]{Lemma}

\theoremstyle{definition}

\newtheorem*{conj}{Conjecture}

\newtheorem{rema}[prop]{Remark}

\newcommand{\Cone}{\mathrm{Cone }}

\newcommand{\Hom}{\mathrm{Hom}}

\newcommand{\Pic}{\mathrm{Pic}}
\newcommand{\Bl}{\mathrm{Bl}}

\newcommand{\Gr}{\mathrm{Gr}}
\newcommand{\End}{\mathrm{End}}

\newcommand{\GL}{\mathrm{GL}}

\newcommand{\ra}{\rightarrow}
\newcommand{\lra}{\longrightarrow}

\newcommand{\bP}{{\mathbb P}}

\newcommand{\bC}{{\mathbb C}}

\newcommand{\bG}{{\mathbb G}}

\newcommand{\bQ}{{\mathbb Q}}
\newcommand{\bR}{{\mathbb R}}

\newcommand{\bZ}{{\mathbb Z}}
\newcommand{\cC}{{\mathcal C}}

\newcommand{\cM}{{\mathcal M}}
\newcommand{\cN}{{\mathcal N}}
\newcommand{\cO}{{\mathcal O}}
\newcommand{\cP}{{\mathcal P}}

\newcommand{\ocM}{\overline{\mathcal M}}
\newcommand{\cbP}{\check{\mathbb P}}

\newcommand{\fl}{{\mathfrak{l}}}

\newcommand{\fS}{{\mathfrak{S}}}

\author{Brendan Hassett and Yuri Tschinkel}
\title[Flops and cubics]{Flops on holomorphic symplectic fourfolds
and determinantal cubic hypersurfaces}

\begin{document}
\date{\today}

\maketitle

\section{Introduction}
Let $\Sigma$ be a K3 surface.  
Any birational map $\Sigma \dashrightarrow \Sigma$
extends to an automorphism;  this follows from
the uniqueness of minimal models for surfaces
of non-negative Kodaira dimension.  
By the Torelli Theorem,
the group of automorphisms of $\Sigma$ isomorphic to the group of automorphisms of $H^2(\Sigma,\bZ)$
compatible with the intersection pairing $\left<,\right>$ and 
the Hodge structure on $H^2(\Sigma,\bC)$, and preserving the cone of 
nef (numerically eventually free)
divisors. The nef cone admits an intrinsic combinatorial description 
(see, for example, \cite{LP}), once we specify 
a polarization $g$:  A divisor
$h$ on $\Sigma$ is nef if and only if $\left<h,D\right> \ge 0$ for
each divisor class $D$ with $\left<D,D\right> \ge -2$ and $\left<g,D\right>>0$.  
This characterization of the automorphism group has many interesting applications 
to arithmetic and geometric questions.

In this paper, we study certain aspects of the
birational geometry of higher-dimensional analogs
of K3 surfaces, i.e., irreducible holomorphic symplectic varieties $F$.
These share many geometric properties with K3 surfaces.  For example,
the group $H^2(F,\bZ)$ carries a canonical integral quadratic form 
$\left(,\right)$, the {\em Beauville-Bogomolov} form (see, for
example, \cite{Hu99}).  Its definition uses the symplectic form
on $F$ but it can be characterized by the fact that the 
self-intersection form on $H^2(F,\bZ)$ is proportional to a power
of the Beauville-Bogomolov form \cite[1.11]{Hu99} \cite{Fujiki}
$$D^{\mathrm{dim}(F)}=c_F \left(F,F\right)^{\mathrm{dim}(F)/2}.$$
Moreover, these varieties satisfy local Torelli theorems \cite{Bea85}
and surjectivity of the period map \cite{Hu99}.  
In contrast to the surface case, 
$F$ may have numerous minimal
models and may admit birational self-maps which are not regular.
Furthermore, we lack a full Torelli theorem that would allow us
to read off automorphisms and birational self-maps from the cohomology.  

Perhaps the best-known examples of irreducible holomorphic symplectic
varieties are punctual Hilbert schemes of K3 surfaces and their
deformations \cite{Bea85}.  Here we focus on the case of length-two subschemes,
which are isomorphic to the symmetric square of the K3 surface blown-up
along the diagonal.  These also arise as varieties of lines on cubic
fourfolds \cite{BeauDonagi}.  
By \cite{HT07}, given a polarization $g$ on $F$, a divisor
$h$ on $F$ is nef if $\left(h,\rho\right) \ge 0$ for each divisor class $\rho$
satisfying  
\begin{enumerate}
\item{$\left(g,\rho\right)>0$; and}
\item{$\left(\rho,\rho\right) \ge -2$, or
$\left(\rho,\rho\right)=-10$ and
$\left(\rho,H^2(F,\bZ)\right)=2\bZ$. (These are called {\em $(-10)$-classes}.)}
\end{enumerate}
We have conjectured that these conditions are also necessary \cite{HT01}.  
The main challenge in proving this is to show that 
the divisors $\rho$ described above obstruct line bundles from being ample.
For example, we expect (extremal) $(-10)$-classes $\rho$ to be Poincar\'e
dual to multiples of lines contained in planes $P\subset F$.  
The presence of such planes has implications for the birational
geometry of $F$, as we can take the {\em Mukai flop} 
or elementary transformation along $P$ \cite[0.7]{Mu84}
$$\begin{array}{rcccl}
   & & \mathrm{Bl}_PF\simeq \mathrm{Bl}_{P'}F' & & \\
  &\swarrow & & \searrow & \\
F & & & & F'.
\end{array}
$$
Indeed, since $P$ is Lagrangian, 
$\cN_{P/F} \simeq \Omega^1_P$ so the exceptional divisor 
$E\subset \mathrm{Bl}_PF$ is isomorphic to 
$\bP(\Omega^1_P)$.  This admits two $\bP^1$-bundle structures over
$\bP^2$, so we can blow down $E$ to obtain a nonsingular variety $F'$
birational to $F$.  This is also an irreducible holomorphic symplectic
variety, deformation equivalent to $F$ \cite[3.4]{Hu97}.

One especially
interesting case is when there are no square-$(0)$ or $(-2)$-classes 
(i.e., divisors $\rho$ with
$\left(\rho,\rho\right)=0,-2$) but multiple $(-10)$-classes.  Here
the nef cones of birational models of $F$
should be completely controlled by $(-10)$-classes.
Not only are the integral extremal rays of $F$ Poincar\'e
dual to $(-10)$-classes,
but this remains true for Mukai flops of $F$.
In this situation, we expect
$F$ to admit {\em infinite} sequences of Mukai flops.
However, Morrison \cite{Mor} and Kawamata \cite{Kaw} have conjectured the following:
\begin{conj}[Finiteness of models]
Let $F$ be a (simply-connected) Calabi-Yau manifold.  Then 
there are finitely-many minimal models of $F$ up to isomorphism.  
\end{conj}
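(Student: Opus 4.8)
The statement is the Morrison--Kawamata finiteness conjecture, which is open in general; the plan is to reduce it to the associated \emph{cone conjecture} and to isolate the point at which the argument breaks down. First I would use that $F$, being Calabi--Yau, has trivial canonical class, so that any two minimal models are isomorphic in codimension one and are joined by a finite sequence of flops. Pullback then identifies each $\Pic(F_i)_{\bR}$ with $\Pic(F)_{\bR}$, and the images of the nef cones $\mathrm{Nef}(F_i)$ tile the interior of the movable cone $\overline{\mathrm{Mov}}(F)$, with internal walls corresponding to flopping contractions and boundary faces to fibrations or divisorial contractions. Two minimal models are isomorphic exactly when their chambers lie in a common orbit of the birational automorphism group $\mathrm{Bir}(F)$ acting on $\Pic(F)_{\bR}$. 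Hence the number of isomorphism classes of minimal models is finite if and only if this action admits a rational polyhedral fundamental domain on $\overline{\mathrm{Mov}}(F)$, which is precisely the Kawamata--Morrison cone conjecture.

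Next I would try to establish the cone conjecture in the present holomorphic symplectic setting, where the lattice structure is explicit. The Beauville--Bogomolov form makes $\Pic(F)$ an integral lattice of hyperbolic signature, so the positive cone is a model of hyperbolic space; by \cite{HT07} the walls of the nef chambers are cut out by the $(-2)$- and $(-10)$-classes, while the square-zero classes sit on the boundary. The strategy is to identify $\mathrm{Bir}(F)$, up to finite index, with the subgroup of the orthogonal group $O(\Pic(F))$ generated by reflections in these distinguished classes together with the monodromy group, and then to invoke the classical theory of arithmetic groups acting on hyperbolic space, for which a rational polyhedral fundamental domain on the convex hull of the rational rays of the positive cone is available.

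The main obstacle is exactly the gap flagged in the introduction: the absence of a full Torelli theorem for irreducible holomorphic symplectic varieties. Without it one cannot identify $\mathrm{Bir}(F)$ with an arithmetically defined, finite-index subgroup of $O(\Pic(F))$, so one can neither guarantee that the group acting on $\overline{\mathrm{Mov}}(F)$ is large enough to admit a rational polyhedral fundamental domain, nor that every wall predicted by a $(-2)$- or $(-10)$-class is actually realized geometrically by a flop or an automorphism. A subsidiary difficulty is the non-compactness of the quotient: rational square-zero classes on the boundary, corresponding to Lagrangian fibrations, create cusps, and it is near these cusps that the arithmetic finiteness arguments are most delicate. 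I expect that a full proof will require either a Torelli-type input pinning down $\mathrm{Bir}(F)$, or a direct global bound on the number of minimal models, and that the former is the decisive missing ingredient.
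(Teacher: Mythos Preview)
The statement you were asked to prove is presented in the paper as an open \emph{conjecture} (attributed to Morrison and Kawamata), not as a theorem; the paper contains no proof of it in general. Your proposal correctly identifies this and sensibly outlines the standard reduction to the cone conjecture together with the Torelli-type obstruction, so as a discussion of the state of the problem it is accurate.

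What the paper actually does is quite different in scope: rather than attacking the conjecture abstractly, it \emph{verifies} it in one concrete family. For the variety of lines $F$ on a generic cubic fourfold containing a cubic scroll, the paper constructs explicit birational involutions $\iota,\iota^{\vee}$ (Theorem~\ref{theo:cubicscroll}), computes the nef and moving cones directly in the rank-two lattice $J_{12}$ (Propositions~\ref{prop:nef1} and \ref{prop:nef2}), and shows that the infinite sequence of Mukai flops collapses to exactly two isomorphism classes of minimal models (Theorem~\ref{theo:main}). So where you invoke a would-be Torelli theorem to identify $\mathrm{Bir}(F)$ with an arithmetic subgroup of $O(\Pic(F))$, the paper instead produces the needed birational automorphisms by hand from the determinantal geometry of cubic threefolds with six nodes, and reads off the chamber structure and the $\bZ$-action of $\langle R_1R_2\rangle$ explicitly. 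There is therefore no ``paper's own proof'' of the general conjecture to compare against; the relevant comparison is between your abstract strategy and the paper's hands-on verification in this example.
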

How can this be reconciled with the existence of infinite sequences of flops?
The only possibility is that after a {\em finite} sequence of flops of $F$, we
arrive at a variety isomorphic to $F$.  This gives rise to birational maps 
$F\dashrightarrow F$ that are not automorphisms.  
These in turn act on $H^2(F,\bZ)$,
preserving the cone of moving divisors but not the nef cone.  

More specifically, consider a general cubic fourfold $X$ containing a cubic scroll. 
The Picard lattice ${\rm Pic}(F)$ has rank two, the associated quadratic form represents
$-10$ but not $-2$ or $0$. For such fourfolds we
\begin{itemize}
\item compute the ample and moving cone in $\Pic(F)$;
\item prove that $F$ does not admit biregular automorphisms;
\item exhibit a birational automorphism of infinite order explaining 
the chamber decomposition of the moving cone.
\end{itemize}
Our principal results are Theorems~\ref{theo:cubicscroll} and \ref{theo:main}.
The first exhibits explicit birational involutions on $F$ and factors
their indeterminacy.  The second describes the action of the birational
automorphism group on $H^2(F,\bZ)$.  

Miles Reid \cite[6.8]{Reid} has offered examples of elliptically-fibered threefolds
with an infinite number of distinct minimal models.  Morrison \cite{Mor} and Kawamata \cite{Kaw}
have proven finiteness results (up to isomorphism!) for
Calabi-Yau fiber spaces $F \ra S$ where $0<\mathrm{dim}(S)\le \mathrm{dim}(X)\le 3$.  
The case of Calabi-Yau manifolds of dimension $\ge 3$ remains open.  

The paper is organized as follows:  The first half is devoted to classical
results on cubic hypersurfaces.  
In Section~\ref{sect:cubic-3} we analyze cubic threefolds $Y$
with six ordinary double points 
in general position and their varieties of lines $F(Y)$.
Section~\ref{sect:deter} establishes a dictionary between determinantal
cubic surfaces and threefolds.
Section~\ref{sect:apply} develops this to explain the geometric 
properties of $Y$, e.g., a 
transparent description of the components of $F(Y)$  and how they
are glued together.  Finally, Section~\ref{sect:CTAD} completes the
correspondence between cubic threefolds with six double points
and determinantal cubic threefolds.  The second half focuses on
applications to the birational geometry of certain irreducible
holomorphic symplectic varieties.  
Section~\ref{sect:example} uses this information to construct birational
involutions on the variety of lines $F$ on a cubic fourfold containing $Y$.
In Section~\ref{sect:appl-cones} we explain the connection to our
conjecture on nef cones.  We close with an application to Zariski-density 
of rational points on $F$.  

\

Throughout, the base field is algebraically closed of characteristic zero.

\

{\bf Acknowledgments:} We are grateful to J\'anos Koll\'ar and Nick Shepherd-Barron
for useful conversations, and to Igor Dolgachev and Claire Voisin
for helpful comments.  
The first author was partially supported by National Science Foundation Grants
0554491 and 0134259.
The second author was partially supported by National Science Foundation
Grants 0554280 and 0602333.

\section{Cubic threefolds with six double points}
\label{sect:cubic-3}
We assume that $Y\subset \bP^4$ is a cubic hypersurface with
ordinary double points at $p_1,\ldots,p_6$, which are in linear general
position.  

\begin{lemm} \label{lemm:noplane}
The cubic hypersurface
$Y$ contains no planes and the variety of lines $F(Y)$ 
has the expected dimension two.  
\end{lemm}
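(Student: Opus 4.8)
The plan is to argue by contradiction for the first assertion, and then deduce the dimension statement. Suppose $Y$ contains a plane $P \cong \bP^2$. A general hyperplane section of $Y$ through $P$ is a cubic surface $S$ containing $P$; since $P$ is a plane, $S = P \cup Q$ where $Q$ is a residual quadric surface. This forces $Y$ itself to be reducible or, more precisely, to contain a pencil of such reducible cubic surfaces, which imposes strong constraints. Rather than pursue that, I would use the double points directly: each $p_i$ is an ordinary double point of $Y$, so the tangent cone at $p_i$ is a nondegenerate quadric threefold cone. If $p_i \in P$, then $P$ must be tangent to (contained in the tangent cone of) $Y$ at $p_i$, and the intersection of $P$ with the tangent cone is a conic in $P$ that is singular (a pair of lines through $p_i$), which already cuts down the freedom. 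The cleanest route is a dimension count: the linear span of $p_1,\dots,p_6$ is all of $\bP^4$ (linear general position), so a plane $P$ can contain at most $3$ of the $p_i$, and generically contains none; but a cubic with six general double points is highly special, and I expect one can show that every plane in such a $Y$ would have to pass through several of the nodes, leading to an incompatibility.

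Concretely, the key steps I would carry out are: (1) Recall that the cubic threefolds $Y$ with six nodes in general position form (up to projective equivalence) a family that can be put in a normal form — e.g., $Y$ is the secant/chordal variety construction or is cut out by the $2\times 2$ minors type equation; use an explicit equation if available from the determinantal description promised in Section~\ref{sect:deter}. (2) With an explicit equation, directly check that no plane lies on $Y$: substitute a parametrized plane and show the resulting identity among cubic forms in two variables cannot hold given the six-nodes-in-general-position hypothesis. (3) Alternatively and more intrinsically: if $P\subset Y$, the Gauss map / the class of lines in $P$ would give a $\bP^2$ inside $F(Y)$; but then $F(Y)$ has a two-dimensional component consisting of lines in a single plane, and the standard incidence-correspondence computation of $\dim F(Y)$ near a line $\ell$ with $N_{\ell/Y} = \cO(1)\oplus\cO(-1)$ versus $\cO\oplus\cO$ shows $F(Y)$ is everywhere of dimension $\le 2$, pure of dimension $2$, so a plane's worth of lines is not excluded on dimension grounds alone — hence one genuinely needs the geometry of the nodes to rule out the plane.

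For the dimension-two statement: the variety of lines on \emph{any} cubic threefold has every component of dimension $\ge 2$ (expected dimension, from the fact that the space of lines in $\bP^4$ has dimension $6$ and lying on a cubic imposes $4$ conditions). For the upper bound, suppose some component $Z \subset F(Y)$ has $\dim Z \ge 3$. The lines parametrized by $Z$ sweep out a subvariety of $Y$ of dimension $\ge 3$, hence all of $Y$ (which is irreducible of dimension $3$), with a positive-dimensional family of lines through a general point $y\in Y$; but for a general point of a smooth cubic threefold the lines through it form a finite set (six lines, classically), and having the six nodes does not change the generic behavior away from the nodes. One subtlety is that lines through the nodes $p_i$ do form positive-dimensional families (the lines in the tangent cone quadric), but these are $1$-dimensional families sitting over each of the six points, contributing components of dimension $\le 1 + (\text{something bounded})$; I would check these contribute only to the expected two-dimensional locus, not more. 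The main obstacle I anticipate is precisely this bookkeeping: showing that the exceptional behavior at the six nodes — where the local structure of $F(Y)$ degenerates — still yields components of dimension exactly two and no plane slips through; this is where an explicit equation or the determinantal model of Section~\ref{sect:deter} will be essential rather than optional.
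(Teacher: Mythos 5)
Your proposal does not actually prove the first (and decisive) assertion. For the no-planes statement you list possible strategies --- an explicit determinantal normal form, or an expectation that ``every plane in such a $Y$ would have to pass through several of the nodes'' --- but none is carried out, and the fallback of invoking the determinantal description is problematic: in the paper that description is developed afterwards, and its application to the lines on $Y$ (Proposition~\ref{prop:getlines}, hence the analysis of $F(Y)$ and the twisted quartics used to prove that every six-nodal cubic is determinantal) explicitly cites this lemma, so leaning on it here risks circularity. The missing idea is elementary and is exactly what the paper does: if a cubic contains the plane $\{x_0=x_1=0\}$, its equation is $x_0Q_0+x_1Q_1$, and the Jacobian criterion shows the singular locus contains the subscheme $\{x_0=x_1=Q_0=Q_1=0\}$, i.e.\ (at least) four coplanar singular points. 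Since the singularities of $Y$ are precisely six ordinary double points in linear general position, no four of them are coplanar, a contradiction. Your remark that a plane can contain at most three of the $p_i$ points in the right direction, but without the computation above it proves nothing.

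The dimension bound also has a gap as written. You reduce to ``through a general point of $Y$ there are only finitely many lines,'' citing the classical count of six lines through a general point of a \emph{smooth} cubic threefold and asserting that ``having the six nodes does not change the generic behavior''; that transfer is precisely what needs an argument (and ruling out a common component of the quadric and cubic cutting out the directions of lines through a general point ultimately requires the no-planes statement you have not established). The paper avoids this: since the nodes are ordinary double points, lines through each node form at most a one-parameter family, so a component of $F(Y)$ of dimension $\ge 3$ would have its generic line inside a smooth hyperplane section; the incidence variety $Z=\{(\ell,H):\ell\subset Y,\ \ell\subset H\}$ would then have dimension $\ge 5$ over the four-dimensional space of hyperplanes, forcing a positive-dimensional family of lines on a smooth cubic surface, contradicting the finiteness of the $27$ lines. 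Your lower bound ($\dim\ge 2$ from the expected-dimension count) is fine, but as it stands the proposal neither proves the no-planes claim nor closes the finiteness step on which your upper bound depends.
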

\begin{proof}
Let $Y'$ denote a cubic threefold containing the plane
$$\Pi=\{x_0=x_1=0 \}.$$
Suppose $G$ is a homogeneous cubic equation for $Y'$ then
$$G=x_0Q_0+x_1Q_1$$
for quadratic forms $Q_0$ and $Q_1$.  The singular locus of $Y'$ contains
the subscheme defined by
$$x_0=x_1=Q_0=Q_1=0$$
which consists of four coplanar points.  Thus the singularities of
$Y'$ are not in linear general position.

Suppose that $F(Y)$ has dimension $>2$.  As the singularities of $Y$ are
ordinary double points, there is at most a one-parameter family of lines
through each singularity.  Thus the generic line of $F(Y)$ is contained in 
a smooth hyperplane section of $Y$.  Consider the incidence correspondence
$$Z=\{(\ell,H): \ell \subset Y, \ell \subset H \} \subset 
\mathrm{Gr}(2,\Lambda^{\perp})  \times \mathrm{Gr}(4,\Lambda^{\perp})$$
in the partial flag variety of $\Lambda^{\perp}$.  Since $Z$ has dimension five
the fibers of projection onto the second factor have dimension one, which
is impossible as smooth cubic surfaces have a finite number of lines.
\end{proof}

\begin{prop} \label{prop:project}
Let $Y$ be a cubic hypersurface with six ordinary double points 
$p_1,\ldots,p_6$ in linear general position.  
Projection from the point $p_6$
$$Y \dashrightarrow \bP^3$$
factors 
$$\begin{array}{rcl}
\tilde{Y}:=\Bl_{p_6}Y & \stackrel{\gamma}{\ra} & \bP^3 \\
 {\scriptstyle \delta}\downarrow  \ & & \\
Y  & &
\end{array}
$$
where $\delta$ is the blow up of a complete intersection $C_6$ of a 
smooth quadric and a cubic in $\bP^3$, consisting of two twisted
cubic curves meeting in five nodes.  
\end{prop}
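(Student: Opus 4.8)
The plan is to analyze the linear projection $\pi_{p_6}\colon Y \dashrightarrow \bP^3$ explicitly in coordinates adapted to the node $p_6$, and then identify the indeterminacy of the inverse map with the curve $C_6$. First I would choose coordinates so that $p_6 = [0:0:0:0:1]$; since $Y$ has an ordinary double point there, its equation has the shape $F = x_4 Q(x_0,\dots,x_3) + G(x_0,\dots,x_3)$, where $Q$ is a nondegenerate quadratic form (nondegeneracy is exactly the condition that the singularity be an ordinary double point, not worse) and $G$ is a cubic form, both in the remaining variables $x_0,\dots,x_3$. The projection from $p_6$ sends $[x_0:\cdots:x_4]$ to $[x_0:x_1:x_2:x_3]$, with coordinates on $\bP^3$ being these four. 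A point $q \in \bP^3 \setminus \{0\}$ has a unique preimage on $Y$ unless the line $\overline{p_6 q}$ lies in $Y$; substituting the parametrization of that line into $F$ shows the line lies in $Y$ precisely when $Q(q) = G(q) = 0$. Hence the locus where $\pi_{p_6}^{-1}$ is ill-defined, call it $C_6$, is the complete intersection $\{Q = G = 0\} \subset \bP^3$ of a quadric and a cubic surface.

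Next I would upgrade this set-theoretic description to the scheme-theoretic factorization. Blowing up $p_6$ resolves the indeterminacy of $\pi_{p_6}$ on the source side: on $\tilde Y = \Bl_{p_6}Y$ the induced map $\gamma\colon \tilde Y \to \bP^3$ is a morphism, because the only obstruction was the cone of lines through $p_6$, which the blow-up separates. To see that $\gamma$ is the blow-up of $\bP^3$ along $C_6$, I would exhibit $\tilde Y$ as the graph: over $\bP^3 \setminus C_6$, $\gamma$ is an isomorphism (unique preimage), while over a point of $C_6$ the fiber is the proper transform of the line $\overline{p_6 q}\subset Y$, i.e.\ a $\bP^1$. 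Thus $\gamma$ is birational, an isomorphism away from $C_6$, with one-dimensional fibers over $C_6$; since both $\tilde Y$ and $\bP^3$ are smooth and $C_6$ is a smooth-codimension-two center (at least away from its singular points — see below), the universal property of blowing up, or equivalently a direct local computation of $\gamma$ in the chart where $x_0 \ne 0$, identifies $\tilde Y$ with $\Bl_{C_6}\bP^3$. One can also verify numerically that $\tilde Y$ has the right Picard rank and canonical class for this to be a blow-up along a curve.

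It remains to establish the geometry of $C_6$ itself: that $Q$ is smooth, and that $C_6$ is reducible as the union of two twisted cubics meeting in five nodes. For smoothness of $Q$: the quadric $\{Q=0\}\subset\bP^3$ is exactly the projectivized tangent cone to $Y$ at $p_6$, and if it were singular the point $p_6$ would be worse than an ordinary double point, contradicting the hypothesis — here I would also invoke Lemma~\ref{lemm:noplane} to rule out degenerate behaviour forced by planes. So $\{Q=0\}\cong\bP^1\times\bP^1$ and $C_6$ is a divisor of type $(3,3)$ on it. For the splitting into two twisted cubics: the five \emph{other} nodes $p_1,\dots,p_5$ of $Y$ project to five points on $\bP^3$, and because $p_1,\dots,p_6$ are in linear general position these five image points lie on $\{Q=0\}$ and impose independent conditions; each node $p_i$ contributes a node of $C_6$ because near $p_i$ the cubic $Y$ (hence its projection) acquires a double point. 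A $(3,3)$-curve on $\bP^1\times\bP^1$ with arithmetic genus $4$ and exactly five nodes has geometric genus $-1$, forcing it to be reducible; the only way a $(3,3)$ divisor degenerates into two components meeting in five points is as $(1,2)+(2,1)$, and a curve of bidegree $(1,2)$ (or $(2,1)$) on the quadric is a twisted cubic in $\bP^3$. Two such twisted cubics of complementary bidegree meet in $1\cdot 1 + 2\cdot 2 = 5$ points, matching the five nodes. I expect the main obstacle to be this last step — pinning down that $C_6$ is \emph{exactly} $(1,2)+(2,1)$ rather than merely reducible, and checking that the five nodes are genuine nodes (not worse singularities) and are the only singularities of $C_6$; this is where the linear-general-position hypothesis on $p_1,\dots,p_6$ must be used most carefully, together with the dimension count from Lemma~\ref{lemm:noplane} guaranteeing $F(Y)$ is a surface and hence that no unexpected one-parameter family of lines collapses the picture.
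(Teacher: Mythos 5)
Your set-up matches the paper's: writing $Y=\{x_4Q+G=0\}$, identifying $C_6=\{Q=G=0\}$ inside the smooth quadric $\{Q=0\}$ (the projectivized tangent cone), and using the genus count ($p_a=4$ versus five nodes) to force reducibility. But the decisive step — that the components are exactly two twisted cubics meeting in the five nodes — is the part you leave open, and the numerical substitute you offer does not close it. First, you never verify that the points $n_i=\gamma(p_i)$ are nodes of $C_6$ and are its \emph{only} singularities; this is what the Jacobian criterion gives (a point of $C_6$ is singular iff $\nabla G$ is proportional to $\nabla Q$ there, which happens exactly at the images of the singular points $p_1,\ldots,p_5$ of $Y$, and ordinariness of the $p_i$ makes the $n_i$ nodes). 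Second, and more seriously, the claim ``the only way a $(3,3)$ divisor degenerates into two components meeting in five points is as $(1,2)+(2,1)$'' assumes precisely what must be proved: that there are two components, both smooth, with all five nodes on their mutual intersection. With only ``reducible, arithmetic genus $4$, five nodes'' in hand, other configurations survive: a ruling line of bidegree $(1,0)$ plus a two-nodal rational curve of bidegree $(2,3)$ (giving $3+2=5$ nodes), or a smooth $(2,2)$ plus one line from each ruling (giving $2+2+1=5$ nodes). These are eliminated only by the geometric argument the paper gives, which is where linear general position actually enters (as you suspected): if $C_6$ contained a line, that line would pass through three of the $n_i$; since each $p_i$ lies on the line $\overline{p_6 n_i}\subset Y$, the plane spanned by $p_6$ and that line would contain $p_6$ and three further double points of $Y$, contradicting general position. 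A conic component would likewise force $p_6$ and four further double points into a hyperplane, and a plane cubic cannot lie on the smooth quadric. Only after lines, conics, and plane cubics are excluded does the degree-six curve split as two twisted cubics, which then necessarily meet in the five nodes.

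Two smaller corrections: $\tilde Y$ is \emph{not} smooth — it retains the ordinary double points over $p_1,\ldots,p_5$ — so your universal-property identification of $\tilde Y$ with the blow-up of $C_6$ cannot be run as stated near the nodes of $C_6$ (the paper is also brief on this, and it is not the content used later); and the reason the $n_i$ lie on $C_6$ is not ``independent conditions'' but Bezout: the line joining two double points of a cubic hypersurface meets it with multiplicity at least four, hence lies in $Y$, so $\ell(p_i,p_6)$ is one of the contracted lines and its image $n_i$ lies on $C_6$.
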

\begin{proof}
The morphism $\gamma$ blows down all the lines in $Y$ incident to $p_6$;
since $p_6$ is an ordinary double point, these are parametrized by a 
complete intersection $C_6$ of a smooth quadric (the projectivized
tangent cone of $Y$ at $p_6$) and a cubic in $\bP^3$.
Furthermore, an easy computation using the Jacobian criterion shows that
$C_6$ is smooth except at the points $n_i=\gamma(p_i), i=1,\ldots,5$.
Note that $n_i$ corresponds to the line $\ell(p_i,p_6)$ joining $p_i$
to $p_6$.  

We claim that
$C_6$ has two irreducible components $E_6$ and $E^{\vee}_6$, each smooth and rational
of degree three, and $n_1,\ldots,n_5$ are nodes of $C_6$.  
Since the normalization of
$C$ has genus $-1$ it is necessarily reducible.
Consider the alternatives for the combinatorics of 
components:  If $C$ were to contain a component of degree one 
then this would meet the rest of $C$ in three nodes, say $n_1,n_2,n_3$.  
Then the ordinary double points $\{p_6,p_1,p_2,p_3 \} \in Y$
would all lie in a plane, contradicting our general position hypothesis.
If $C$ were to contain a component of degree two then this would meet
the rest of $C$ in four coplanar nodes, say $n_1,n_2,n_3,n_4$.  
Then $\{p_6,p_1,p_2,p_3,p_4 \} \in Y$ would span a three-dimensional
space, again contradicting our hypothesis.  
If $C$ contained a component
of degree three and arithmetic genus one (i.e., a nodal plane cubic) then
the quadric $Q$ would be degenerate.  
\end{proof}

\begin{rema}
This analysis implies that
$$n_1,n_2,n_3,n_4,n_5 \in Q \simeq \bP^1 \times \bP^1$$
satisfy the following genericity conditions:
\begin{itemize}
\item{the $n_i$ are distinct;}
\item{no two of the $n_i$ lie on a ruling of $Q$;}
\item{no four of the $n_i$ lie on a hyperplane
section of $Q \subset \bP^3$.}
\end{itemize}
Hence $S=\Bl_{n_1,\ldots,n_5}Q$
is isomorphic to a nonsingular cubic surface.
While we will not prove this, $S$ is isomorphic to the cubic surface
constructed from $Y$ in (\ref{eqn:cubictosegre})
of Section~\ref{sect:CTAD}.
In particular, $S$ does not depend on which double point $p_i \in Y$
we choose for our projection.  
\end{rema}

\begin{coro} \label{coro:linessing}
The singular locus of $F(Y)^{sing} \subset 
F(Y)$ is equal to the lines meeting the
singular points $p_1,\ldots,p_6 \in Y$.  The irreducible components of
$F(Y)^{sing}$ consist of twelve smooth rational curves 
$$E_1,E_1^{\vee},\ldots,E_6,E_6^{\vee},$$
where $E_j \cup E_j^{\vee}$ parametrizes the lines through $p_j$.  
The singularities of $F(Y)^{sing}$ are the 15 lines $\ell(p_i,p_j)$
joining singularities of $Y$, and 
$$\ell(p_i,p_j)=E_i \cap E_j = E_i \cap E_j^{\vee}= E_i^{\vee}\cap E_j=E_i^{\vee}\cap E_j^{\vee}.$$
\end{coro}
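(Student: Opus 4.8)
The plan is to analyze the singular locus of $F(Y)$ by exploiting the factorization of the projection from $p_6$ in Proposition~\ref{prop:project}, together with its symmetry under the choice of which double point we project from. First I would recall the general principle (due to Clemens--Griffiths and Altman--Kleiman) that for a cubic threefold with an ordinary double point $p$, a line $\ell \subset Y$ is a singular point of $F(Y)$ exactly when either $\ell$ passes through a singular point of $Y$, or $\ell$ has abnormally many infinitesimal deformations; for a cubic threefold whose only singularities are ODPs in general position, the latter does not occur (one checks via the normal bundle sequence that away from the singular points $F(Y)$ is smooth, since the obstruction space $H^1(\ell,\cN_{\ell/Y})$ vanishes for such lines). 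Thus $F(Y)^{sing}$ is precisely the set of lines meeting $\{p_1,\dots,p_6\}$.

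Next I would identify the components. Projecting from $p_6$, the curve $C_6 \subset \bP^3$ of lines through $p_6$ was shown in Proposition~\ref{prop:project} to split as $E_6 \cup E_6^\vee$, two smooth rational cubic curves meeting at the five nodes $n_1,\dots,n_5$, where $n_i$ corresponds to the line $\ell(p_i,p_6)$. Via $\delta^{-1}$ this realizes the lines through $p_6$ as $E_6 \cup E_6^\vee$ inside $F(Y)$, two smooth rational curves; repeating the projection from each $p_j$ gives the twelve curves $E_1,E_1^\vee,\dots,E_6,E_6^\vee$, with $E_j \cup E_j^\vee$ the locus of lines through $p_j$. Since every singular point of $F(Y)$ lies on some line through some $p_j$, these twelve curves exhaust $F(Y)^{sing}$, and each is smooth and rational by construction.

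Then I would locate the singularities of $F(Y)^{sing}$ itself. A point of $F(Y)^{sing}$ is singular on $F(Y)^{sing}$ when it lies on two of the twelve curves, i.e.\ the corresponding line passes through two of the $p_j$; such a line is exactly one of the fifteen joins $\ell(p_i,p_j)$, which exists and is unique by general position. It remains to check the incidence identity $\ell(p_i,p_j) = E_i \cap E_j = E_i \cap E_j^\vee = E_i^\vee \cap E_j = E_i^\vee \cap E_j^\vee$: the line $\ell(p_i,p_j)$ passes through $p_i$, hence lies on $E_i \cup E_i^\vee$, and likewise on $E_j \cup E_j^\vee$, so it lies in the intersection of these two reducible curves; the content is that it in fact lies on \emph{all four} components. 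This I would deduce from the node description of $C_6$ (equally, $C_i$ and $C_j$): at $n_k \in C_6$ the curve $C_6$ genuinely branches, with the two branches lying on $E_6$ and $E_6^\vee$ respectively, so the node $n_k = \ell(p_k,p_6)$ belongs to both $E_6$ and $E_6^\vee$; translating back and running the argument symmetrically in $i$ and $j$ yields membership in each of the four curves.

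The main obstacle I anticipate is the last point: rigorously showing that each join line $\ell(p_i,p_j)$ lies on all four components rather than, say, on $E_i, E_i^\vee$ and only one of $E_j, E_j^\vee$. This is really a statement about how the two rational components of $C_6$ are glued at their nodes, and the cleanest route is to use the explicit determinantal/cubic-surface model of Section~\ref{sect:CTAD} (or at least the local branch structure at a node of the complete intersection of a smooth quadric and a cubic), rather than a direct coordinate computation in $\bP^4$; the symmetry in the six points then does the rest.
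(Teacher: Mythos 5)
Your proposal is correct and follows essentially the same route as the paper: the general facts that $F(Y)$ is smooth at lines avoiding the singularities of $Y$ and singular at lines through an ordinary double point (Altman--Kleiman, Clemens--Griffiths), combined with the description of the curves of lines through each $p_j$ coming from Proposition~\ref{prop:project}. The final point you flag as an obstacle is not really one: Proposition~\ref{prop:project} already asserts that the two smooth twisted cubics through $p_j$ meet exactly at the five nodes corresponding to the lines $\ell(p_i,p_j)$, so projecting from $p_i$ and from $p_j$ in turn immediately places $\ell(p_i,p_j)$ on all four curves $E_i, E_i^{\vee}, E_j, E_j^{\vee}$, without any appeal to the machinery of Section~\ref{sect:CTAD}.
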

\begin{proof}
It is a general fact \cite[\S 1]{AK} that for any cubic hypersurface 
$Y'$, the variety of lines $F(Y')$ is smooth at lines avoiding the 
singularities of $Y'$.  Moreover, $F(Y')$ is singular at lines
passing through an ordinary double point of $Y'$ \cite[7.8]{CG}.  
The structure of the singular locus then follows from Proposition~\ref{prop:project}.  
\end{proof}

\begin{coro} \label{coro:unique}
The pair $(Y,p_6)$ is uniquely determined up to isomorphism by the
isomorphism class of the nodal curve $C_6$.  
\end{coro}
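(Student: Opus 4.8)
The plan is to reverse the factorization of Proposition~\ref{prop:project}: from the curve $C_6\subset\bP^3$ one reconstructs the pair $(Y,p_6)\subset(\bP^4,\mathrm{pt})$ in a canonical way, and one then checks that the embedding $C_6\subset\bP^3$ itself is intrinsic to the abstract curve, so that the whole construction depends only on the isomorphism class of $C_6$.

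First I would fix coordinates on $\bP^4$ so that $p_6=[0:0:0:0:1]$. Since $p_6$ is an ordinary double point, $Y$ has an equation of the form $x_4Q(x_0,\dots,x_3)+C(x_0,\dots,x_3)$ with $Q$ a nondegenerate quadratic form and $C$ a cubic form, and projection from $p_6$ is $[x_0:\cdots:x_4]\mapsto[x_0:\cdots:x_3]$, whose birational inverse $\bP^3\dra Y$ is $y\mapsto[Q(y)y_0:Q(y)y_1:Q(y)y_2:Q(y)y_3:-C(y)]$. This inverse is given by the five-dimensional linear system $\langle y_0Q,\dots,y_3Q,\,C\rangle$ of cubics through $C_6=\{Q=C=0\}$, and it contracts the quadric $\{Q=0\}$ to the point $p_6$. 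The next step is to recover all of this data intrinsically from $C_6\subset\bP^3$. Using the Koszul resolution
$$0\to\cO_{\bP^3}(-5)\to\cO_{\bP^3}(-3)\oplus\cO_{\bP^3}(-2)\to\cI_{C_6}\to 0$$
of the complete intersection $C_6$, twisting and taking cohomology yields $h^0(\cI_{C_6}(1))=0$, $h^0(\cI_{C_6}(2))=1$, and $h^0(\cI_{C_6}(3))=5$ with $H^0(\cI_{C_6}(3))=Q\cdot H^0(\cO_{\bP^3}(1))\oplus\bC\cdot C$. Hence $Q$ is the unique quadric through $C_6$, the point $p_6$ is the point to which $Q$ is contracted by the cubic system (an ordinary double point because $Q$ is smooth), and $C$ is determined modulo $Q\cdot(\text{linear form})$; since replacing $C$ by $C+\ell Q$ amounts to the change of coordinate $x_4\mapsto x_4+\ell$ on $\bP^4$, the hypersurface $Y=\{x_4Q+C=0\}$ together with $p_6$ is determined, up to $\PGL_5$, by the embedded curve $C_6\subset\bP^3$ up to $\PGL_4$. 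So the construction $C_6\subset\bP^3\ \longmapsto\ (Y,p_6)$ is canonical and carries isomorphisms of embedded curves to isomorphisms of pairs.

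It then remains to promote ``embedded curve'' to ``abstract curve.'' By adjunction for a complete intersection of type $(2,3)$ in $\bP^3$ one has $\omega_{C_6}\cong\cO_{C_6}(1)$, and the Koszul computation also gives that $C_6$ is nondegenerate and that $H^0(\cO_{\bP^3}(1))\to H^0(\cO_{C_6}(1))$ is an isomorphism, so $h^0(\omega_{C_6})=4$. Thus the given inclusion is precisely the canonical embedding $C_6\hookrightarrow\bP(H^0(\omega_{C_6})^{\vee})\cong\bP^3$, which is functorial in the abstract curve. Combining this with the previous paragraph, any isomorphism between two such nodal curves $C_6\cong C_6'$ extends to a projective equivalence of their canonical models and hence to an isomorphism $(Y,p_6)\cong(Y',p_6')$, which is the assertion.

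The step requiring the most care is the identification $H^0(\cI_{C_6}(3))=Q\cdot H^0(\cO_{\bP^3}(1))\oplus\bC\cdot C$ together with the bookkeeping that the residual ambiguity in $C$ is exactly the one absorbed by $\PGL_5$; the remaining ingredients are routine consequences of the Koszul complex of a complete intersection and of the adjunction formula. (One may also note that the genericity conditions on $C_6$ recorded after Proposition~\ref{prop:project} are what ensure the reconstructed cubic again has six ordinary double points in linear general position, although for the uniqueness statement alone only the reversibility of the construction is needed.)
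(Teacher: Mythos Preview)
Your proof is correct and follows the same route as the paper's: you show that $C_6\hookrightarrow\bP^3$ is its canonical embedding (so determined by the abstract curve) and that $(Y,p_6)$ is recovered as the image of $\bP^3$ under the complete linear system of cubics through $C_6$. The paper states exactly these two facts and leaves the verifications to the reader; you have supplied them via the Koszul resolution and adjunction, together with the explicit bookkeeping that the ambiguity $C\mapsto C+\ell Q$ is absorbed by a coordinate change on $\bP^4$.
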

\begin{proof}
The curve $C_6$ is a stable curve of genus four and 
$C_6 \hookrightarrow \bP^3$ is its canonical embedding.  We can
characterize $Y$ as
the image of $\bP^3$ under the linear series of cubics passing through
$C_6$.  
\end{proof}

\section{Determinantal cubic surfaces and threefolds}
\label{sect:deter}

We review determinantal representations of smooth cubic surfaces.
The story begins with Grassmann \cite{Grass}  
who showed that cubic surfaces arise as the common points of three
nets of planes in $\bP^3$, i.e., the locus where a $3\times 3$ matrix
of linear forms on $\bP^3$ has nontrivial kernel.  Schr\"oter \cite{Schr}
showed that a generic surface admits such a realization 
and Clebsch \cite{Clebs}
tied these representations to the structure of the lines on the
cubic surface.  Dickson \cite{Dickson} addressed the problem of
expressing arbitrary smooth cubic surfaces in determinantal form.
See \cite[6.4]{Beau99} \cite{BK} for modern accounts and \cite{Dolg}
for further historical discussion.

\begin{prop} \label{prop:Grassmann}
Let $S\subset \bP^3$ be a smooth cubic surface.  Then there exists a $3\times 3$
matrix $M=(m_{ij})$ with entries linear forms on $\bP^3$ such that
$$S=\{\det(M)=0 \}.$$
Up to the left/right action of $\GL_3 \times \GL_3$, there are $72$ such representations, corresponding to sextuples of disjoint lines on $S$.
\end{prop}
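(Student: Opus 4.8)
The plan is to produce a determinantal representation of $S$ from a choice of six pairwise disjoint lines, then count how many such choices there are and verify that two sextuples give $\GL_3\times\GL_3$-equivalent matrices exactly when they coincide. First I would fix a sextuple $\ell_1,\dots,\ell_6$ of mutually skew lines on $S$; these are the exceptional curves of one of the $72$ blow-down structures $S\to\bP^2$ (equivalently, one of the $36$ double-sixes together with a choice of half). The key geometric input is that the twisted cubics on $S$ residual to such a sextuple (or, more classically, the pencils of conics, images of lines in $\bP^2$) organize into three pencils of curves cut out by planes in $\bP^3$. Concretely, embed $S\hookrightarrow\bP^3$ by the anticanonical system $|{-}K_S|=|3\ell_0-\ell_1-\cdots-\ell_6|$, where $\ell_0$ is the class of a line in $\bP^2$; the three linear systems of planes through $S$, after fixing the sextuple, restrict to three $2$-dimensional subsystems whose members are the cubic rational curves $C$ with $C\cdot\ell_i$ prescribed. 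Mapping these three systems to $\bP^3$ and pairing against the coordinate functions $x_0,\dots,x_3$ of the ambient $\bP^3$ gives a $3\times 3$ matrix $M=(m_{ij})$ of linear forms, and one checks that $\det M$ vanishes precisely on $S$ by a degree count ($\det M$ is a cubic vanishing on $S$, and $S$ is irreducible of degree $3$, so they agree up to scalar) together with the fact that $M$ drops rank along $S$ because a point of $S$ lies on a member of each of the three pencils.

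The next step is to show this sets up a bijection (up to the group action) between determinantal representations and sextuples of disjoint lines. Given $M$ with $\det M=0$ defining $S$, the cokernel sheaf $\mathrm{coker}(M\colon\cO_{\bP^3}(-1)^3\to\cO_{\bP^3}^3)$ is supported on $S$ and is a line bundle $L$ there; the six lines are recovered as the locus where $\mathrm{rank}(M)\le 1$, or equivalently from the jumping behaviour of $L$ — more precisely, $L$ differs from $\cO_S(1)$ by a divisor class of the form $\ell_{i_1}+\dots$, and the standard analysis (see \cite[6.4]{Beau99}, \cite{BK}) identifies $L$ with $\cO_S(2\ell_0)$-type twists, pinning down a specific sextuple. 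Left/right multiplication by $\GL_3\times\GL_3$ changes $M$ by isomorphisms of the source and target, hence does not change $\mathrm{coker}$ up to isomorphism, so the sextuple is an invariant of the $\GL_3\times\GL_3$-orbit; conversely two matrices with isomorphic cokernel are related by such multiplication, because an isomorphism of cokernels lifts to an isomorphism of the (locally free) presentations. Finally, counting: the number of ways to choose six pairwise disjoint lines on a smooth cubic surface is $72$ — this is classical, equal to the $27$ lines organized so that a maximal set of mutually skew lines has size $6$, and the count $72$ is the number of such "sixes" (equivalently $36$ double-sixes, each contributing its two halves). Assembling these gives exactly $72$ inequivalent determinantal representations.

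I expect the main obstacle to be the cokernel-to-sextuple dictionary: verifying that the cokernel $L$ is an honest line bundle on $S$ (not just a rank-one sheaf with torsion), computing its class in $\Pic(S)$, and checking that distinct sextuples yield non-isomorphic $L$ so that the correspondence is genuinely a bijection rather than merely a surjection. The rank-one-ness requires that $M$ not drop rank to $0$ anywhere, i.e.\ that the $2\times 2$ minors have no common zero on $\bP^3$; this is where the smoothness of $S$ enters, and it needs the genericity that the determinantal representation is "good." The degree and rank-drop computations in the forward direction are routine linear algebra on $\bP^3$; the line-counting is standard and can be cited. I would organize the write-up so that the construction occupies the bulk, with the equivalence and the number $72$ following from the cokernel functor being a well-defined injection with image the full set of disjoint sextuples.
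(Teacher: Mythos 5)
The paper does not actually prove this proposition: it is presented as a classical result, with the construction attributed to Grassmann, Schr\"oter, Clebsch and Dickson and the modern treatment cited as \cite[6.4]{Beau99}, \cite{BK}, \cite{Dolg}. So there is no in-paper argument to compare against; what you propose is essentially the standard modern proof from those references, namely the dictionary between $\GL_3\times\GL_3$-orbits of determinantal representations and the cokernel line bundle, matched with the $72$ sixers (two halves of each of the $36$ double-sixes). That overall strategy is sound, and your uniqueness step (an isomorphism of cokernels lifts to the locally free presentations, using $\mathrm{Ext}^1(\cO_{\bP^3}(-1)^3,\cO_{\bP^3}(-2)^3)=0$ and minimality) is the right mechanism.

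Three points need tightening. First, your explicit forward construction (``pairing the three systems against the coordinate functions $x_0,\dots,x_3$'') is not well formed as stated -- it would produce a $3\times 4$ array, not a $3\times 3$ matrix -- and it is anyway redundant: once you have the cokernel dictionary, the cleanest route is to take $L=\beta^*\cO_{\bP^2}(1)$ for the blow-down $\beta$ determined by the sextuple, check that $L$ is ACM with $h^0(L)=3$, $h^0(L(-1))=h^1(L)=0$, and read off $M$ as the middle map of the minimal resolution $0\to\cO_{\bP^3}(-2)^3\to\cO_{\bP^3}(-1)^3\to i_*L\to 0$; that $\det M$ is a nonzero cubic cutting out $S$ follows from the resolution, not just a degree count (you must rule out $\det M\equiv 0$). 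Second, your identification of the cokernel class as an ``$\cO_S(2\ell_0)$-type twist'' is off: the relevant class is the twisted-cubic class $C=\ell_0$ (so $C\cdot H=3$, $C^2=1$), i.e.\ the pullback of $\cO_{\bP^2}(1)$; distinct sextuples give distinct such classes because their intersection numbers with the $27$ lines differ, which is what makes the correspondence injective. Third, the rank condition you need is that $M$ has rank $\ge 2$ at every point of $S$ (rank $\le 1$, not rank $0$, is the danger, since any point of $\bP^3$ where rank drops to $\le 1$ automatically lies on $S$); this is exactly where smoothness enters, because the differential of $\det M$ is a combination of the $2\times 2$ cofactors, so a rank-$\le 1$ point would be a singular point of $S$. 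With these repairs your outline reproduces the cited classical/modern proof.
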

This was extended by B. Segre \cite{BSegre} (cf. \cite[6.5]{Beau99})
to smooth cubic surfaces defined over arbitrary fields:
\begin{prop}
Let $S$ be a smooth cubic surface defined over an arbitrary field $k$.
Then the following conditions are equivalent:
\begin{itemize}
\item{There exists a $3\times 3$ matrix of linear forms over $k$ such that
$S=\{\det(M)=0 \}$.}
\item{$S$ contains a rational point and a sextuple
of disjoint lines defined over $k$.}
\item{$S$ admits a birational morphism to $\bP^2$ defined over $k$.}
\end{itemize}
\end{prop}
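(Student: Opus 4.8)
The plan is to treat condition (3) as a hub and to prove $(1)\Leftrightarrow(3)$ and $(2)\Leftrightarrow(3)$; the two ``easy'' implications are the classical kernel/cokernel geometry of Grassmann and Clebsch made Galois-equivariant, while the arithmetic content sits in $(2)\Rightarrow(3)$ and $(3)\Rightarrow(1)$.

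\emph{The easy implications.} For $(1)\Rightarrow(3)$: restrict $M\colon\cO_{\bP^3}(-1)^3\to\cO_{\bP^3}^3$ to $S$ and let $L$ be its cokernel, pushed forward along $i\colon S\hookrightarrow\bP^3$. Since $S$ is smooth, $M$ has rank exactly $2$ along $S$ — a point of rank $\le1$ would lie in the singular locus of $S$, as the adjugate matrix vanishes there — so $L$ is a line bundle. From $0\to\cO_{\bP^3}(-1)^3\to\cO_{\bP^3}^3\to i_*L\to0$ and $\chi(\cO_S)=1$ one reads off $L\cdot(-K_S)=3$, $L^2=1$, $h^0(S,L)=3$, with $i_*L$ globally generated; hence $|L|$ defines a morphism $S\to\bP^2$ of degree $L^2=1$, that is, a birational morphism, defined over $k$ because $M$, $L$ and $H^0(S,L)$ are. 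For $(3)\Rightarrow(2)$: a birational morphism $\pi\colon S\to\bP^2$ over $k$ contracts, over $\bar k$, six pairwise disjoint lines whose union is a Galois-stable reduced curve — a sextuple of disjoint lines defined over $k$ — and pulling back a $k$-point of $\bP^2$ lying off the six blown-up points (or, if such a point is itself one of them, using the exceptional line above it, which is then Galois-invariant, hence $\cong\bP^1_k$) produces a rational point of $S$.

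\emph{The substantial implications.} For $(2)\Rightarrow(3)$: contracting the disjoint sextuple over $k$ yields a del Pezzo surface $S'$ of degree $9$, i.e.\ a $k$-form of $\bP^2$ — a Severi--Brauer surface. The given $k$-point of $S$ descends to a $k$-point of $S'$, directly if it avoids the six lines, and otherwise via the line $\ell_i$ through it, which disjointness forces to be Galois-invariant so that its image in $S'$ is $k$-rational; a Severi--Brauer surface with a rational point is $\bP^2_k$ by Ch\^atelet's theorem, giving the birational morphism $S\to\bP^2_k$. (The rational-point hypothesis is genuinely needed: blowing up a general degree-$6$ point on a nontrivial Severi--Brauer surface gives a cubic surface carrying a Galois-stable disjoint sextuple but having no rational point and no determinantal representation.) For $(3)\Rightarrow(1)$: set $L=\pi^*\cO_{\bP^2}(1)$, so $L^2=1$, $L\cdot(-K_S)=3$, $h^0(S,L)=3$, and embed $S\subset\bP^3$ anticanonically. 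The vanishings $H^0(S,L(-1))=H^0(S,L(-2))=0$ and $H^1(S,L(t))=0$ for all $t$, together with surjectivity of $H^0(S,L)\otimes H^0(S,-K_S)\to H^0(S,L(1))$ (projective normality of line bundles on a del Pezzo surface), force the minimal graded free resolution of $i_*L$ over $\bP^3$ into the shape $0\to\cO_{\bP^3}(-1)^3\to\cO_{\bP^3}^3\to i_*L\to0$; the $3\times3$ matrix $M$ of linear forms realizing the first map is assembled from the $k$-vector space $H^0(S,L)$ and its $k$-linear linear syzygies, hence is defined over $k$, and $\det M$ cuts out the support $S$ of $i_*L$ with the correct degree $3$, so $S=\{\det M=0\}$ after rescaling.

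\emph{Main obstacle.} The crux is $(3)\Rightarrow(1)$: pinning down the exact form $0\to\cO_{\bP^3}(-1)^3\to\cO_{\bP^3}^3\to i_*L\to0$ of the minimal free resolution (Castelnuovo--Mumford-type vanishing plus projective normality on a degree-$3$ del Pezzo surface), and — decisively for the descent — checking that every term and map of this resolution is canonically, hence Galois-equivariantly, determined, so that $M$ can be chosen over $k$. A secondary subtlety, in $(2)\Rightarrow(3)$, is handling a rational point of $S$ that happens to lie on one of the six contracted lines. Over $\bar k$ the whole correspondence, together with the count in Proposition~\ref{prop:Grassmann}, is the classical picture of Grassmann and Clebsch: the cokernel and kernel maps of $M$ present $S$ as a blow-up of $\bP^2$ in two ways, and the twelve contracted lines form a double-six, accounting for $72=2\cdot36$.
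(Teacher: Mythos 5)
The paper does not actually prove this statement: it is quoted as a classical result of B.~Segre, with \cite[6.5]{Beau99} as the modern reference, so the only meaningful comparison is with that standard argument --- and your proposal is essentially it, and it is sound. The cycle $(1)\Rightarrow(3)$ via the cokernel line bundle (rank exactly $2$ along $S$ by the adjugate/smoothness argument, $L^2=1$, $L\cdot(-K_S)=3$, $h^0(L)=3$, global generation), $(3)\Rightarrow(2)$ by taking the exceptional sextuple and a preimage of a $k$-point of $\bP^2$ (note $\#\bP^2(k)\ge 7$ even for $k=\bF_2$), $(2)\Rightarrow(3)$ via contraction to a Severi--Brauer surface and Ch\^atelet, and $(3)\Rightarrow(1)$ via the length-one graded free resolution of $i_*L$ with twists pinned down by minimality and $\deg\det=3$, is exactly the Beauville-style correspondence between linear determinantal representations and ACM line bundles of twisted-cubic type, made Galois-equivariant. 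Two small points deserve care, neither of which is a genuine gap but both of which you assert rather than prove. First, since $k$ is arbitrary you cannot appeal to Kodaira-type vanishing for $H^1(S,L(t))=0$ and for the multiplication map; the characteristic-free route is to check that $L$ is $0$-regular with respect to $-K_S$ (indeed $H^1(L+K_S)\cong H^1(S,\pi^*\cO_{\bP^2}(-1))=0$ and $H^2(L+2K_S)\cong H^0(-K_S-L)^\vee=0$ since no conic passes through the six points), after which Mumford's Castelnuovo lemma gives both $H^1(L(t))=0$ for $t\ge -1$ and surjectivity of $H^0(L(t))\otimes H^0(-K_S)\to H^0(L(t+1))$ for all $t\ge 0$; the remaining $H^1$'s for $t\le -2$ follow from Serre duality and a short Riemann--Roch check. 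Likewise the contraction of the sextuple over $k$ should be justified by descent from $k^{\mathrm{sep}}$ (legitimate over imperfect fields because the Fano scheme of the $27$ lines is \'etale, and $\omega^{-1}$ gives canonical descent data for the target). Second, your case analysis in $(2)\Rightarrow(3)$ (and partly in $(3)\Rightarrow(2)$) is unnecessary: once the contraction $S\to S'$ is a $k$-morphism, the image of a $k$-point is a $k$-point regardless of whether it lies on one of the six lines. With these routine verifications filled in, the argument is complete and agrees with the cited source; your remark that the rational-point hypothesis is genuinely needed (blow-up of a degree-six point on a nontrivial Severi--Brauer surface) is also correct, since a $k$-point of such a cubic surface would map to a $k$-point of the Severi--Brauer surface under the blow-down morphism.
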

\noindent We emphasize that each individual line in the sextuple need not be defined
over $k$.  

C. Segre \cite[\S 12-14]{Segre} analyzed determinantal representations of cubic threefolds:
\begin{prop} \label{prop:Segre}
Let $Y \subset \bP^4$ be a generic cubic hypersurface realized as the determinant of a $3\times 3$
matrix of linear forms.  Then $Y$ has six ordinary double points, in linear general position.
Conversely, any cubic hypersurface with six ordinary double point in linear general position is
determinantal.  
\end{prop}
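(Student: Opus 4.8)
The plan is to prove the two directions of Proposition~\ref{prop:Segre} separately, using Proposition~\ref{prop:Grassmann} and the dictionary between determinantal cubic surfaces and threefolds as a guide. For the forward direction, suppose $Y=\{\det(M)=0\}$ for a $3\times 3$ matrix $M$ of linear forms on $\bP^4$. The singular locus of such a determinantal hypersurface is, by a standard computation, the locus where $M$ has rank $\le 1$, i.e. where all $2\times 2$ minors vanish. For generic $M$ this is a finite scheme; I would compute its length by a Porteous-type formula (or directly, by exhibiting a normal form). The expected number of rank-$\le 1$ points of a generic $3\times 3$ matrix of linear forms on $\bP^4$ is six, and one checks by the Jacobian criterion at such a point that the singularity is an ordinary double point (the Hessian of $\det(M)$ is nondegenerate there because the second-order part of $\det$ near a rank-one matrix is a nondegenerate quadratic form in the complementary directions). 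Linear general position of the six points should follow from genericity of $M$ together with a dimension count: the locus of matrices whose rank-$\le 1$ points fail to be in general position is a proper closed subvariety of the space of all $M$. Alternatively, and more in the spirit of the paper, I would invoke Proposition~\ref{prop:Grassmann}: restricting $M$ to a generic hyperplane $\bP^3\subset\bP^4$ gives a determinantal smooth cubic surface $S=Y\cap\bP^3$; the six nodes of $Y$ are cut out on $\bP^4$ by the rank condition, and their position is governed by the geometry of the sextuple of disjoint lines on $S$.

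For the converse, let $Y\subset\bP^4$ be a cubic hypersurface with ordinary double points at $p_1,\ldots,p_6$ in linear general position. The strategy is to build the matrix $M$ from a resolution of an appropriate sheaf on $Y$. Concretely, I would consider the ideal sheaf $\cI=\cI_{\Gamma/\bP^4}$ of a suitable aCM (arithmetically Cohen--Macaulay) subscheme $\Gamma$ — for instance a twisted cubic through five of the six nodes, or a hyperplane-section configuration — and show that it admits a linear resolution
$$0 \ra \cO_{\bP^4}(-2)^{\oplus 3} \stackrel{M}{\lra} \cO_{\bP^4}(-1)^{\oplus 3} \ra \cI_\Gamma(? ) \ra 0,$$
whose determinant is a cubic vanishing exactly on $Y$. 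Equivalently, one uses the theory of matrix factorizations / the correspondence of Beauville (\cite{Beau99}) identifying determinantal representations of a hypersurface with certain rank-one aCM sheaves on it. The six nodes are precisely the points where the cokernel sheaf fails to be locally free, forcing rank drop of $M$. The existence of such a sheaf for $Y$ with six nodes in general position is where the hypotheses enter: by Corollary~\ref{coro:unique} the pair $(Y,p_6)$ is determined by the genus-four stable curve $C_6\subset\bP^3$, and by Proposition~\ref{prop:project} $Y$ is reconstructed as the image of $\bP^3$ under cubics through $C_6$; this explicit model lets one write down the required linear algebra directly.

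The main obstacle will be the converse direction — specifically, proving \emph{existence} of the determinantal representation for every $Y$ with six general nodes, rather than just for a generic one. The forward direction and the "generic $Y$ is determinantal" part are essentially dimension counts, but upgrading to "every such $Y$" requires either a deformation argument (the determinantal locus is closed in the family of six-nodal cubics, and one shows it is also open, hence everything) or an explicit construction via the resolution above. I expect the cleanest route is the explicit one: use the blow-down description $\tilde Y=\Bl_{p_6}Y\to\bP^3$ of Proposition~\ref{prop:project}, identify the linear system of cubics through $C_6$ with a $3\times 3$ determinant by analyzing the homogeneous ideal of $C_6$ (a canonical genus-four curve, whose ideal is generated by one quadric and one cubic, with a well-understood Betti table), and transport this to $\bP^4$. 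Checking that the resulting $M$ has linear entries and the right rank-drop locus is the technical heart of the argument; the general-position hypothesis on the $p_i$ is exactly what guarantees $C_6$ is a stable curve of the expected type (two twisted cubics meeting in five nodes), and hence that the Betti table is as needed.
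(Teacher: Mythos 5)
Your forward direction is essentially the paper's own argument in different clothing: in Proposition~\ref{prop:transversal} the six singular points are exhibited as $\bP(\Lambda^{\perp})\cap\Sigma_1$, where $\Sigma_1\simeq\bP(V)\times\bP(V^{\vee})\subset\bP(\End(V))$ has degree six, and the node structure and linear general position follow from the transversality conditions of Proposition~\ref{prop:linalg}; your Porteous/Hessian computation is the same thing in coordinates. Two small caveats: the singular locus of $\{\det(M)=0\}$ coincides with the rank-$\le 1$ locus only when $\bP(\Lambda^{\perp})$ meets the smooth part of $\Sigma_2$ transversely (which is exactly the genericity you invoke), and your ``dimension count'' for linear general position still needs either a witness matrix or the paper's cohomological argument; for the ``generic'' statement as phrased this is acceptable.

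The genuine gap is in the converse, and you have flagged it yourself: the existence of the determinantal representation for \emph{every} six-nodal $Y$ with nodes in general position is the entire content of the claim, and your sketch does not produce it. Moreover your displayed resolution cannot be correct as written: the cokernel of an injective map $\cO_{\bP^4}(-2)^{\oplus 3}\to\cO_{\bP^4}(-1)^{\oplus 3}$ is a torsion sheaf supported on the cubic hypersurface, so it cannot be $\cI_{\Gamma/\bP^4}$; what is needed is a rank-one sheaf on $Y$ itself (for instance a twisted relative ideal sheaf $\cI_{\Gamma/Y}$, necessarily non-locally-free at the six nodes), together with a proof that it is aCM with the right Hilbert function --- which is not easier than the original statement, and the ``transport to $\bP^4$'' of the quadric-plus-cubic ideal of $C_6$ is left entirely implicit. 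The paper's route (Section~\ref{sect:CTAD}) is sheaf-free and quite different: the cubics double at $p_1,\ldots,p_6$ map $\bP^4$ to the Segre threefold $\fS$, contracting the twisted quartics through the nodes (Proposition~\ref{prop:twisted}); a six-nodal $Y'$ maps to a smooth hyperplane section $S\subset\fS$, i.e.\ a cubic surface with a distinguished double-six (Proposition~\ref{prop:tosegre}, Cremona's hexahedral construction); the forward construction of Proposition~\ref{prop:transversal} applied to the marked $S$ yields a determinantal six-nodal cubic with the same image in $\fS$, hence equal to $Y'$ because both are swept out by the fibers of $\varpi$ over $S$. So the paper deduces the converse from the forward direction plus the moduli-theoretic identification with hyperplane sections of $\fS$, whereas your plan would require building the aCM/MCM sheaf directly --- a viable but unexecuted program whose key existence step is missing.
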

\noindent For completeness, we will provide an argument in
Propositions~\ref{prop:transversal} and \ref{prop:inverse}.

Our main goal is to explain how all these classical theorems are related.
Here is the key geometric ingredient:  Let $W$ be a vector space with a nondegenerate
quadratic form $\left(,\right)$;  taking orthogonal complements, we obtain a natural identification
\begin{equation} \label{eq:perp}
\begin{array}{rcl}
\Gr(n,W)&=&\Gr(\dim(W)-n,W) \\
\Lambda & \mapsto & \Lambda^{\perp}. 
\end{array}
\end{equation}
Let $G$ be a group acting linearly on $W$, 
with the natural induced action
on $\Gr(n,W)$ and the action on 
$\Gr(\dim(W)-n,W)$ induced by (\ref{eq:perp}).

We are especially interested in the case where $W=\End(V)$ 
for some vector space $V$ of dimension $d$,
$\left(,\right)$ is the trace pairing
$$\left(A,B\right)=\mathrm{tr}(AB),$$
and $\GL(V)\times \GL(V)$ acts on $\End(V)$ by left-right multiplication
$$(g_1,g_2)\cdot M=g_1Mg_2^t.$$
Here we are using the transpose operator
$$t:\End(V) \ra \End(V)$$
obtained by dualizing and then applying the trace pairing.
Consider the semidirect product
\begin{equation} \label{eqn:semidirect}
1 \ra \GL(V) \times \GL(V)  \ra G \ra \fS_2 \ra 1
\end{equation}
where $\fS_2$ acts by exchanging the factors.  
Since 
$$(g_1,g_2) \cdot M^t=g_1M^tg_2^t=((g_2,g_1)\cdot M)^t$$
$G$ also acts naturally on $\End(V)$ and thus on the Grassmannians
$$\Gr(n,\End(V)) = \Gr(d^2-n,\End(V)).$$

Consider the rank stratification on $\End(V)$
$$0 \subset \Sigma_1 \subset \Sigma_2 \ldots \subset \Sigma_{d-1} \subset \End(V)$$
which is invariant under the group actions.  
Recall the description of the tangent space of $\Sigma_k$ at a matrix $A$ of rank $k$ 
(see, for instance, \cite[pp. 68]{ACGH}):
$$T_A\Sigma_k=\{M \in \End(V): M(\mathrm{ker}A) \subset \mathrm{im}A \}.$$
We can then express
$$(T_A\Sigma_k)^{\perp}=\{N \in \End(V): NA=AN=0 \},$$
which is a linear subspace contained in $\Sigma_{d-k}$.  
If $A$ has rank $<k$ then $T_A\Sigma_k=\End(V)$;  furthermore,
$\Sigma_{k-1}$ is the singular locus of $\Sigma_k$ \cite[pp. 69]{ACGH}.

\begin{prop} \label{prop:linalg}
Assume $V$ is three-dimensional and $\Lambda \subset \End(V)$
is four-dimensional.   The following conditions are equivalent:
\begin{itemize}
\item{
$\Lambda$ is tangent to $\Sigma_2$ at a smooth point or intersects $\Sigma_1$ nontrivially;}
\item{$\Lambda^{\perp}$ is tangent to $\Sigma_1$ or is
tangent to $\Sigma_2$ at a smooth point.}
\end{itemize}
\end{prop}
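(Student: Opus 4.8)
The plan is to exploit the perp-duality $\Lambda \mapsto \Lambda^{\perp}$ together with the explicit descriptions of tangent spaces to the rank strata recalled just above the statement. Since $V$ is three-dimensional, $\End(V)$ is $9$-dimensional, so $\Lambda$ and $\Lambda^{\perp}$ are both $4$-dimensional and each determines the other. The strata in play are $\Sigma_1$ (rank $\le 1$ matrices, dimension $5$) and $\Sigma_2$ (rank $\le 2$ matrices, the determinantal cubic, dimension $8$, with singular locus $\Sigma_1$). First I would set up the dictionary: for a matrix $A$ of rank $2$, a smooth point of $\Sigma_2$, we have $T_A\Sigma_2 = \{M : M(\ker A)\subset \mathrm{im}\, A\}$, which is $8$-dimensional, and its perp is the line $(T_A\Sigma_2)^{\perp} = \{N : NA = AN = 0\}$, spanned by a single rank-$1$ matrix $A^{\circ}$ (the "adjugate direction": the unique up-to-scalar rank-one matrix with image $\ker A$ and kernel $\mathrm{im}\, A$). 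Thus $\Lambda$ being tangent to $\Sigma_2$ at the smooth point $A$ is equivalent to $\Lambda \subset T_A\Sigma_2$, which by perp-duality is equivalent to $\Lambda^{\perp} \supset (T_A\Sigma_2)^{\perp} = \bC\cdot A^{\circ}$, i.e. $\Lambda^{\perp}$ contains the rank-one matrix $A^{\circ}$, i.e. $\Lambda^{\perp}$ meets $\Sigma_1$ nontrivially.

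So the core of the argument is the symmetric equivalence: \emph{$\Lambda$ is tangent to $\Sigma_2$ at a smooth point $\iff$ $\Lambda^{\perp}$ meets $\Sigma_1$ nontrivially.} One direction is what I just sketched (given $A\in\Sigma_2$ smooth with $\Lambda\subset T_A\Sigma_2$, produce the rank-one matrix $A^\circ\in\Lambda^\perp$). For the converse, suppose $\Lambda^{\perp}$ contains a rank-one matrix $N$; I would write $N = v\otimes \phi$ for $v\in V$, $\phi\in V^{\vee}$, and then exhibit a rank-$2$ matrix $A$ with $\ker A = \bC\phi^{\perp}$... more precisely with $\mathrm{im}\, A = \ker\phi$ and $\ker A = \bC v$ — chosen so that $N$ satisfies $NA = AN = 0$, hence $N \in (T_A\Sigma_2)^{\perp}$, hence $\Lambda \subset T_A\Sigma_2$; one must check such an $A$ of rank exactly $2$ exists (it does, as the space of $M$ with $M v = 0$ and $\mathrm{im}\, M \subset \ker\phi$ is a copy of $\Hom(V/\bC v, \ker\phi) \cong \End(\bC^2)$, which contains rank-$2$ elements) and that $A$ being a smooth point of $\Sigma_2$ just means $\mathrm{rank}(A) = 2$. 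Running this same equivalence with the roles of $\Sigma_1$ and $\Sigma_2$ left as they are gives: \emph{$\Lambda$ is tangent to $\Sigma_1$ at a smooth point $\iff$ $\Lambda^{\perp}$ meets $\Sigma_{d-2}=\Sigma_1$... } — wait, here I must be careful: the clean duality pairs $\Sigma_k$ with rank-$(d-k)$ data, and for $d=3$ that pairs $\Sigma_2$ with $\Sigma_1$. The statement to be proved bundles "tangent to $\Sigma_1$" with "tangent to $\Sigma_2$ at a smooth point" on the $\Lambda^{\perp}$ side, and pairs this with "tangent to $\Sigma_2$ at a smooth point OR meets $\Sigma_1$" on the $\Lambda$ side. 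So I would combine two sub-equivalences: (a) $\Lambda$ tangent to $\Sigma_2$ at a smooth point $\iff$ $\Lambda^\perp$ meets $\Sigma_1$; and (b) $\Lambda$ meets $\Sigma_1$ $\iff$ $\Lambda^\perp$ is tangent to $\Sigma_2$ at a smooth point — which is just (a) with $\Lambda$ and $\Lambda^\perp$ swapped — together with the observation that "$\Lambda^\perp$ tangent to $\Sigma_1$" and "$\Lambda$ tangent to $\Sigma_1$" need to be reconciled. Let me restructure: the first bullet says ``$\Lambda$ tangent to $\Sigma_2$ smoothly, or $\Lambda$ meets $\Sigma_1$''; the second says ``$\Lambda^\perp$ tangent to $\Sigma_1$, or $\Lambda^\perp$ tangent to $\Sigma_2$ smoothly.'' By (a) applied to $\Lambda$: ``$\Lambda$ tangent to $\Sigma_2$ smoothly'' $\iff$ ``$\Lambda^\perp$ meets $\Sigma_1$''; and ``$\Lambda^\perp$ meets $\Sigma_1$'' should be shown equivalent to ``$\Lambda^\perp$ tangent to $\Sigma_1$'' — indeed a $4$-plane in a $9$-space meeting the $5$-dimensional $\Sigma_1$ generically does so in a point, and being tangent to $\Sigma_1$ there (dimension count: $4 + 5 = 9$, so transverse intersection is a point, and any extra tangency or the mere fact of nonempty intersection when it ``shouldn't'' — here $4+5 = 9$ exactly so a point IS expected) — I need to think about whether ``meets nontrivially'' and ``tangent'' coincide here; since $\Lambda$ is a linear subspace, $\Lambda$ tangent to $\Sigma_1$ at a smooth point $P$ means $\Lambda \subset T_P\Sigma_1$, which is strictly stronger than $P\in\Lambda\cap\Sigma_1$. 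Hmm — so ``meets $\Sigma_1$'' and ``tangent to $\Sigma_1$'' are genuinely different and I will need to match them up correctly by tracking dimensions through the perp.

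The cleanest route, which I would adopt, is to prove the single master equivalence $(\star)$: for a $4$-plane $\Lambda\subset\End(\bC^3)$, \emph{$\Lambda\cap\Sigma_1\neq 0 \iff \Lambda^{\perp}\subset T_A\Sigma_2$ for some rank-$2$ matrix $A$} (this is essentially the adjugate correspondence above, read in both directions), and then also handle the $\Sigma_1$-tangency clause. For the $\Sigma_1$ side: $\Sigma_1$ is the cone over $\bP(V)\times\bP(V^{\vee})$; at a rank-one point $N = v\otimes\phi$, $T_N\Sigma_1 = \{v'\otimes\phi + v\otimes\phi' : v'\in V, \phi'\in V^{\vee}\} + \bC N$, which is $5$-dimensional, and a direct computation gives $(T_N\Sigma_1)^{\perp} = \{M : M v \in \bC v\ \text{is false} \dots\}$ — I would instead just compute: $(T_N\Sigma_1)^\perp = \{M : \mathrm{tr}(M(v'\otimes\phi)) = \mathrm{tr}(M(v\otimes\phi')) = 0\ \forall v',\phi'\} = \{M : \phi(Mv') = 0\ \forall v',\ \phi'(Mv) = 0\ \forall \phi'\} = \{M : \mathrm{im}\,M \subset \ker\phi,\ Mv = 0\}$, which (as noted) is the $4$-dimensional space $\Hom(V/\bC v,\ker\phi)$ and contains rank-$2$ matrices. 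So $\Lambda^\perp$ tangent to $\Sigma_1$ at $N$ means $\Lambda^\perp \subset \Hom(V/\bC v,\ker\phi)$, equivalently (taking perps inside the $4$-dimensional... no — inside $\End(V)$) $\Lambda \supset \Hom(V/\bC v,\ker\phi)^\perp$. I expect all of this to fall out by linear algebra once the three tangent-space/perp computations are tabulated.

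\textbf{Main obstacle.} I expect the real work — and the one place to be careful rather than routine — is \emph{not} any single linear-algebra computation but the bookkeeping that aligns ``meets $\Sigma_1$'' on one side with ``tangent to $\Sigma_1$'' on the other: under perp, a $4$-plane meeting the $5$-fold $\Sigma_1$ in a point whose tangent space is $T$ dualizes to a $4$-plane contained in $T^{\perp}$, and $T^{\perp}$ is exactly the $4$-dimensional tangent-type space $\Hom(V/\bC v,\ker\phi)$ that shows up as the ``tangent to $\Sigma_1$'' condition — verifying that these match (including the degenerate configurations, e.g. when $\Lambda$ meets $\Sigma_1$ in more than a point, or when the relevant matrix $A$ drops rank) is the delicate point. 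The individual ingredients — $T_A\Sigma_2$ and its perp, $T_N\Sigma_1$ and its perp, existence of rank-$2$ elements in the relevant $4$-dimensional spaces — are all short and I would present them as a single displayed table of identities rather than belaboring each. I would then assemble $(\star)$ and its $\Sigma_1$-analogue, apply perp-symmetry to get the two implications, and conclude.
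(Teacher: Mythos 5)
Your plan founders on a dimension slip that then propagates through the whole architecture. Since $\dim\End(V)=9$ and $\dim\Lambda=4$, the orthogonal complement $\Lambda^{\perp}$ is \emph{five}-dimensional, not four: $\bP(\Lambda)\simeq\bP^3$ cuts out the cubic surface, while $\bP(\Lambda^{\perp})\simeq\bP^4$ cuts out the cubic threefold. So there is no symmetry letting you deduce (b) from (a) ``by swapping the roles of $\Lambda$ and $\Lambda^{\perp}$,'' and — worse — your master equivalence (a) cannot be true as stated: a $\bP^4$ always meets the four-dimensional Segre variety $\bP(\Sigma_1)\subset\bP^8$ (these are exactly the six nodes of the determinantal cubic threefold), so ``$\Lambda^{\perp}$ meets $\Sigma_1$ nontrivially'' holds for \emph{every} $\Lambda$, whereas ``$\Lambda$ is tangent to $\Sigma_2$ at a smooth point'' is a genuine (divisorial) condition. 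The error in your converse step is the conflation of ``$\Lambda\subset T_A\Sigma_2$ for some rank-two $A$'' with ``$\Lambda$ is tangent to $\Sigma_2$ at a smooth point'': tangency requires the point $A$ to lie in $\Lambda$ itself (equivalently, the cubic surface $\bP(\Lambda)\cap\Sigma_2$ is singular at a rank-two point), and the matrix $A$ you build from a rank-one $N\in\Lambda^{\perp}$ (with $\ker A=\mathrm{im}\,N$, $\mathrm{im}\,A=\ker N$) has no reason to lie in $\Lambda$. The same gap infects $(\star)$: it is true as you state it, but its right-hand side is strictly weaker than the tangency condition appearing in the proposition. Finally, your rendering of ``$\Lambda^{\perp}$ tangent to $\Sigma_1$ at $N$'' as $\Lambda^{\perp}\subset(T_N\Sigma_1)^{\perp}$ is not the right notion (and is dimensionally impossible once $\dim\Lambda^{\perp}=5$); here tangency means $N\in\Lambda^{\perp}\cap\Sigma_1$ with $\dim(\Lambda^{\perp}\cap T_N\Sigma_1)\ge 2$, i.e.\ excess over the transverse value $1$.

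The structural point your plan misses is that the two disjunctions cannot be matched clause by clause; the proof has to produce the point of tangency \emph{inside} the relevant subspace and accept a rank dichotomy. For instance, starting from a rank-one $A_0\in\Lambda$, the paper intersects $\Lambda^{\perp}$ with a well-chosen four-dimensional space of matrices inside $A_0^{\perp}$ (namely $\{M: M(\mathrm{im}\,A_0)=0,\ M(\ker A_0)\subset\ker A_0\}$); a dimension count gives a nonzero $B\in\Lambda^{\perp}$ there, and then one argues by cases: if $\mathrm{rank}(B)=2$ its kernel and image are forced, so $T_B\Sigma_2=A_0^{\perp}\supset\Lambda^{\perp}$ and $\Lambda^{\perp}$ is tangent to $\Sigma_2$ at the smooth point $B\in\Lambda^{\perp}$; if $\mathrm{rank}(B)=1$ one instead gets $\dim(\Lambda^{\perp}\cap T_B\Sigma_1)\ge 2$, i.e.\ tangency to $\Sigma_1$. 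Analogous case analyses handle the other implications (e.g.\ from $\Lambda\subset T_{A_0}\Sigma_2$ with $A_0\in\Lambda$ one gets not merely the adjugate direction $B_0\in\Lambda^{\perp}\cap\Sigma_1$ — which is no information — but a second independent element $B_1\in\Lambda^{\perp}\cap T_{B_0}\Sigma_1$, i.e.\ genuine tangency to $\Sigma_1$). Your tangent-space computations for $\Sigma_1$ and $\Sigma_2$ and their perps are fine and are indeed the raw material, but without producing tangency points inside $\Lambda$ resp.\ $\Lambda^{\perp}$ and without the rank-one-versus-rank-two dichotomy, the proposed assembly does not prove the proposition.
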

\begin{proof}
Suppose that $\Lambda$ is tangent to $\Sigma_2$ at a rank-two matrix $A_0$, i.e.,
$$\Lambda \subset T_{A_0}\Sigma_2=
\{M\in \End(V): M(\mathrm{ker}(A_0)) \subset \mathrm{im}(A_0)\}.$$
Let $B_0$ be a matrix with $\mathrm{ker}(B_0)=\mathrm{im}(A_0)$ 
and $\mathrm{im}(B_0)=\mathrm{ker}(A_0)$;  this is unique up to scalars.
Since $B_0^{\perp}=T_{A_0}\Sigma_2$ we have
$B_0 \in \Lambda^{\perp}$.  
Consider the induced linear transformation
$$\Lambda \ra \Hom(\mathrm{im}(A_0),V/\mathrm{im}(A_0)) \oplus \Hom(\mathrm{ker}(A_0),\mathrm{im}(A_0))$$
with kernel containing $A_0$.  The image has dimension at most three, so there exists a matrix
$B_1$ linearly independent from $B_0$ such that 
$$B_1(\mathrm{im}(A_0))\subset \mathrm{ker}(A_0), B_1(\mathrm{ker}(A_0)) \subset \mathrm{im}(A_0)$$
and 
$$\mathrm{tr}(AB_1)=0 \text{ for each }A\in \Lambda.$$
Thus $B_1 \in T_{B_0}\Sigma_1 \cap \Lambda^{\perp}.$

Now suppose $\Lambda$ is incident to $\Sigma_1$ at a rank-one matrix $A_0$;  again, we have
$$A_0^{\perp}=\{M:M(\mathrm{im}(A_0)) \subset \mathrm{ker}(A_0) \}.$$
Consider the subspace of dimension four
$$\{M: M(\mathrm{im}(A_0))=0, M(\mathrm{ker}(A_0))\subset \mathrm{ker}(A_0)\}.$$ 
This necessarily meets $\Lambda^{\perp}$ in a nontrivial matrix $B$.  If
$B$ has rank two then $\mathrm{im}(B)=\mathrm{ker}(A_0)$ and 
$\mathrm{ker}(B)=\mathrm{im}(A_0)$, thus
$$
T_B\Sigma_2=\{M:M(\mathrm{ker}(B))\subset \mathrm{im}(B) \}=A_0^{\perp}.$$
Since $A_0^{\perp} \supset \Lambda^{\perp}$ we have
$$T_B\Sigma_2 \supset \Lambda^{\perp}$$
whence $\Lambda^{\perp}$ is tangent to $\Sigma_2$ at the smooth point $B$.  
If $B$ has rank one then the tangent space
$$T_B\Sigma_1\subset A_0^{\perp}$$
has dimension five, so
$$T_B\Sigma_1 \cap \Lambda^{\perp}  \subset A_0^{\perp}$$
has dimension at least two, i.e., $\Sigma_1$ is tangent to $\Lambda^{\perp}$
at $B$.  

Conversely, suppose that $\Lambda^{\perp}$ is tangent to $\Sigma_1$ at $B_0$. 
Choose $B_1$ linearly independent from $B_0$ such that
$$B_1 \in \Lambda^{\perp} \cap T_{B_0}\Sigma_1$$
which means that $B_1(\mathrm{ker}(B_0))\subset \mathrm{im}(B_0)$.
Consider the four-dimensional vector space of matrices
$$W=\{M: M(\mathrm{ker}(B_0)) \subset \mathrm{ker}(B_0), M(\mathrm{im}(B_0))=0 \}.$$
These matrices are contained in $\{B_0,B_1\}^{\perp}$.  Since $\Lambda$ has
dimension four
$$W \cap \Lambda \ \subset \ \{B_0,B_1 \}^{\perp}$$
is nontrivial.  Choose $A \neq 0$ in this intersection.  If $A$ is of rank one then we are done.
Otherwise, the rank of $A$ equals two.  However, we have
$$T_A\Sigma_2=\{M:M(\mathrm{ker}(A))\subset \mathrm{im}(A) \}=
		\{M:M(\mathrm{ker}(B_0)) \subset \mathrm{ker}(B_0) \}=B_0^{\perp}.$$
Thus $T_A\Sigma_2$ contains $\Lambda$, i.e., $\Lambda$ meets $\Sigma_2$
nontransversally at $A$.  

Now suppose $\Lambda^{\perp}$ is tangent to $\Sigma_2$ at a smooth point $B$,
which has rank two.  Thus 
$$\Lambda^{\perp} \subset \{ M: M(\mathrm{ker}(B))\subset \mathrm{im}(B) \}$$
and 
$$\Lambda \supset A_0$$
where $A_0$ is a nonzero matrix with $\mathrm{ker}(A)=\mathrm{im}(B)$ and
$\mathrm{im}(A)=\mathrm{ker}(B)$.  Any such matrix has rank one, hence $\Lambda$
meets $\Sigma_1$ nontrivially.
\end{proof}

We use this to interpret our determinantal expressions for
cubic hypersurfaces.  
Tensor multiplication gives the Segre embedding 
$$\bP(V) \times \bP(V^{\vee}) \hookrightarrow \bP(\End(V));$$
the image has degree six and may be identified with $\Sigma_1$. 
Given a four-dimensional subspace
$$\Lambda \subset \End(V)$$
the intersection 
\begin{equation} \label{eq:defS}
S:=\bP(\Lambda) \cap \Sigma_2 \subset \bP(\Lambda)\simeq \bP^3
\end{equation}
is a determinantal cubic surface.  It is smooth precisely when $\Lambda$
meets $\Sigma_2$ transversely at smooth points.
Then we obtain an embedding
$$\begin{array}{rcl}
\iota: S & \hookrightarrow & \bP(V) \times \bP(V^{\vee}) \\
       s & \mapsto & (\mathrm{ker}(s),\mathrm{im}(s))
\end{array}
$$
such that the projections induce the blow-up realizations of $S$ 
(cf. Proposition~\ref{prop:Grassmann})
$$\beta:S \ra \bP(V), \quad \beta^{\vee}:S \ra \bP(V^{\vee}).$$

Let $\Lambda^{\perp}$ be the orthogonal complement to $\Lambda$ with respect to the trace pairing.
Then 
\begin{equation} \label{eq:defY}
Y:=\bP(\Lambda^{\perp}) \cap \Sigma_2 \subset \bP(\Lambda^{\perp}) \simeq \bP^4
\end{equation}
is a determinantal cubic threefold.  It is necessarily singular along the 
points of $\bP(\Lambda^{\perp}) \cap \Sigma_1$.  If $\bP(\Lambda^{\perp})$ intersects 
$\Sigma_1$ and the smooth points of $\Sigma_2$ transversely then the Bezout theorem
implies that the singular locus of $Y$ is
$$\{p_1,\ldots,p_6 \}:= \Sigma_1 \cap \bP(\Lambda^{\perp}).$$ 
Note that these give a sextuple of points in $\bP(V)\times \bP(V^{\vee})\simeq \bP^2 \times \bP^2$;
a straightforward cohomology computation shows these are in linear general position in $\bP(\End(V))$.  

\begin{prop} \label{prop:transversal}
Let $S$ and $Y$ be determinantal cubic hypersurfaces defined by
Equations~\ref{eq:defS} and \ref{eq:defY} above.  Then
$Y$ is a cubic threefold with six ordinary double points in linear general position if
and only if $S$ is a smooth cubic surface. 
We thus obtain a identification
$$\left\{ \begin{array}{c}
\text{determinantal cubic } \\
\text{threefolds with six } \\
\text{ordinary double points} \\
\text{in linear general position } 
\end{array} \right\} 
=
\left\{ \begin{array}{c}
\text{determinantal cubic surfaces } \\
\text{without singularities }  \\
\end{array} \right\}
$$
that is equivariant with respect to the action of $G$.
\end{prop}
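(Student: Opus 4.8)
The plan is to exploit the linear-algebraic equivalence already packaged in Proposition~\ref{prop:linalg}, translating its two bullet points into the geometric statements about $S$ and $Y$ via the Segre picture. First I would unwind the definitions: $S=\bP(\Lambda)\cap\Sigma_2$ is smooth precisely when $\bP(\Lambda)$ meets $\Sigma_2$ transversely at smooth points \emph{and} avoids $\Sigma_1=\mathrm{Sing}(\Sigma_2)$ entirely; equivalently, negating this, $S$ is singular iff $\Lambda$ is tangent to $\Sigma_2$ at a smooth point or $\Lambda\cap\Sigma_1\neq 0$. That is exactly the first bullet of Proposition~\ref{prop:linalg}. Dually, for the threefold one should observe that $Y=\bP(\Lambda^\perp)\cap\Sigma_2$ always meets $\Sigma_1$ (by dimension count in $\bP^8$, a $\bP^4$ meets the six-dimensional $\Sigma_1\subset\bP^8$), so the relevant question is whether that intersection and the intersection with the smooth locus of $\Sigma_2$ are both transverse; if so, Bezout gives exactly six nodes $\Sigma_1\cap\bP(\Lambda^\perp)$ in linear general position (the last assertion being the ``straightforward cohomology computation'' flagged just before the statement). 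So $Y$ fails to be a six-nodal cubic threefold with nodes in general position iff $\bP(\Lambda^\perp)$ is tangent to $\Sigma_1$ somewhere or tangent to $\Sigma_2$ at a smooth point — which is the second bullet of Proposition~\ref{prop:linalg}.

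With both sides so rephrased, the biconditional ``$Y$ is six-nodal in general position $\iff$ $S$ is smooth'' is literally the contrapositive of Proposition~\ref{prop:linalg} (negating ``$S$ singular $\iff$ ($\Lambda$-bullet)'' and ``($\Lambda^\perp$-bullet) $\iff$ $Y$ degenerate'' and chaining). So the steps, in order, are: (1) record the transversality dictionary — smoothness of $S$ $\leftrightarrow$ transversality of $\bP(\Lambda)$ along $\Sigma_2$ away from $\Sigma_1$; (2) do the dimension count showing $\bP(\Lambda^\perp)\cap\Sigma_1\neq\emptyset$ always, and then argue that if the two intersections are transverse then $Y$ has exactly six nodes and no worse singularities (using $\Sigma_1=\mathrm{Sing}(\Sigma_2)$ and the tangent-space formulas $(T_A\Sigma_k)^\perp=\{N:NA=AN=0\}$ to see the local structure at each $p_i$ is an ordinary double point); (3) verify the six points are in linear general position via the cohomology/degree computation on $\bP^2\times\bP^2\hookrightarrow\bP^8$, noting they avoid the diagonal-type degeneracies; (4) invoke Proposition~\ref{prop:linalg} to tie the two failure conditions together, yielding the equivalence; (5) observe $G$-equivariance, which is immediate since the whole construction ($\Lambda\mapsto\Lambda^\perp$, $\Sigma_1$, $\Sigma_2$, the trace pairing) is built from $G$-invariant data.

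I expect the main obstacle to be step~(2)–(3): checking that transversality of $\bP(\Lambda^\perp)$ against $\Sigma_1$ and against the smooth locus of $\Sigma_2$ really does force the six intersection points to be \emph{ordinary} double points (rank drops by the minimum amount, quadratic cone non-degenerate) and nothing subtler, and then pinning down ``linear general position'' precisely — no $k$ of them spanning less than a $\bP^{k-1}$ for the relevant ranges — from the Segre geometry. The tangent-cone computation at a node $p_i$ uses that near a rank-one matrix $\Sigma_2$ is cut out by the $2\times 2$ minors, whose Hessian at $p_i$ restricted to the transverse $\bP^4$ direction is non-degenerate exactly when $\bP(\Lambda^\perp)$ is transverse to both strata; this is where the real content sits, but it is a local computation on a fixed small matrix and should be manageable. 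Everything else is bookkeeping: the equivalence itself is free once Proposition~\ref{prop:linalg} is in hand, and equivariance under $G$ needs only the remark that $\Lambda\mapsto\Lambda^\perp$ and the rank stratification are preserved by the $G$-action defined in~(\ref{eqn:semidirect}).
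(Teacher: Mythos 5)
Your proposal is correct and follows essentially the same route as the paper: the paper's proof is exactly the observation that Proposition~\ref{prop:linalg} identifies the loci in $\Gr(4,\End(V))=\Gr(5,\End(V))$ where the transversality conditions fail, combined with the dictionary stated just before the proposition (smoothness of $S$ $\leftrightarrow$ transversality of $\bP(\Lambda)$ to $\Sigma_2$ along its smooth locus; transversality of $\bP(\Lambda^{\perp})$ to $\Sigma_1$ and to the smooth locus of $\Sigma_2$ $\Rightarrow$ exactly six ordinary double points, with Bezout giving the count and the asserted cohomology computation giving linear general position), plus the automatic $G$-equivariance. One minor correction: $\Sigma_1\simeq\bP(V)\times\bP(V^{\vee})$ is four-dimensional of degree six (not six-dimensional), so nonemptiness of $\bP(\Lambda^{\perp})\cap\Sigma_1$ follows from the complementary-dimension count $4+4=8$, and the number six comes from the degree.
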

Indeed, Proposition~\ref{prop:linalg} says we can
identify the open subsets in
$$\Gr(4,\End(V)) = \Gr(5,\End(V))$$
where our transversality conditions hold.

\section{Geometric applications of the determinantal description}
\label{sect:apply}

In this section, we assume that $S$ and $Y$ satisfy the conclusions of 
Proposition~\ref{prop:transversal}.
The determinantal description allows a transparent derivation of many 
of the key properties of $Y$.

\begin{prop} \label{prop:getlines}
Let $F(Y)$ denote the variety of lines on $Y$.  We have a natural
surjective morphism
$$\nu: \bP(V) \sqcup S \sqcup \bP(V^{\vee}) \ra F(Y)$$
that maps each component birationally onto its image.  
\end{prop}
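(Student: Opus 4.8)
The plan is to use the determinantal geometry to produce lines on $Y$ explicitly, parametrized by the three pieces $\bP(V)$, $S$, and $\bP(V^{\vee})$, and then check that the resulting map $\nu$ is a morphism, is surjective, and is birational onto the image of each component. First I would recall that $Y = \bP(\Lambda^{\perp}) \cap \Sigma_2$ with singular points $\{p_1,\dots,p_6\} = \bP(\Lambda^{\perp}) \cap \Sigma_1$. A line $\ell \subset \bP(\Lambda^{\perp})$ lies on $Y$ precisely when the restriction of $\det$ to $\ell$ vanishes identically; since $\det$ restricted to a $\bP^1 \subset \bP(\End(V))$ is a cubic form, this happens iff that cubic is the zero polynomial. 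So the task is to find all pencils $\Pi \subset \Lambda^{\perp}$ of matrices whose determinant vanishes identically, and to organize them into three families.

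The key point is the linear-algebra classification of such pencils in $\End(V)$ with $V$ three-dimensional. Given a pencil $\langle A, B\rangle$ with $\det(sA+tB)\equiv 0$, a standard Kronecker-type analysis shows that generically there is either a common vector $v$ killed by every matrix in the pencil (i.e. $v \in \bigcap \ker$), or a common vector in $\bigcap \mathrm{im}$, equivalently a common functional vanishing on all images — and these two cases are exchanged by the transpose. Concretely, for a point $[v] \in \bP(V)$ the set of matrices $M \in \End(V)$ with $v \in \ker(M)$ is a linear subspace of dimension six in $\End(V)$, and it meets the five-dimensional $\bP(\Lambda^{\perp}) \simeq \bP^4$ in a line (for generic $\Lambda$); this line lies on $\Sigma_2 \supset Y$ since any matrix killing $v$ has rank $\le 2$. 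This defines $\nu$ on $\bP(V)$, and dually on $\bP(V^{\vee})$. On $S = \bP(\Lambda)\cap\Sigma_2$: a point $s \in S$ is a rank-$\le 2$ matrix, and I would use the pairing between $\Lambda$ and $\Lambda^{\perp}$ together with the description $(T_A\Sigma_2)^{\perp} = \{N : NA = AN = 0\}$ from the excerpt. For a rank-two $s$, the orthogonal complement of $T_s\Sigma_2$ is the line spanned by the companion matrix $B_s$ with $\ker B_s = \mathrm{im}\, s$, $\mathrm{im}\, B_s = \ker s$; intersecting the hyperplane $s^{\perp} \supset \Lambda^{\perp}$ with the six-dimensional space of matrices annihilating both $\ker s$ and... — more carefully, I would show that $\{M : M\,\mathrm{im}(s)\subset\ker(s)\} \cap \bP(\Lambda^{\perp})$ is a line of rank-$\le 2$ matrices lying on $Y$. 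This gives the third family.

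Having defined $\nu$ on each of the three pieces, I would verify: (i) $\nu$ is a morphism — each of the three constructions is manifestly given by linear-algebra incidence conditions, hence algebraic, and one checks the relevant intersection with $\bP(\Lambda^{\perp})$ is always exactly a line (dimension count, using genericity of $\Lambda$, i.e. the transversality hypotheses of Proposition~\ref{prop:transversal}); (ii) surjectivity — any line on $Y$ is a pencil with identically vanishing determinant, so by the Kronecker classification it has a common kernel vector, a common image vector, or (the intermediate/degenerate case, which should correspond to $S$) it falls into the third family, and conversely these exhaust the possibilities; (iii) birationality onto the image — for $\bP(V)$ and $\bP(V^{\vee})$ one shows a generic line in the image meets $\Sigma_1$ in a unique point, recovering $[v]$, hence $\nu$ is injective over a dense open set; for $S$, one recovers $s$ from the line as the unique rank-two point of $\Sigma_2$ on it (a generic line of this type meets $\Sigma_1$ in no point but is tangent to or meets the smooth locus of $\Sigma_2$ in one point), again giving generic injectivity, and by Corollary~\ref{coro:linessing} $F(Y)$ has dimension two so each component maps onto a surface.

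I expect the main obstacle to be item (ii), the surjectivity — that is, the clean classification of all pencils in $\End(V)$ with vanishing determinant into exactly these three types, and matching the "$S$-type" pencils precisely with points of $\bP(\Lambda)\cap\Sigma_2$ rather than with spurious extra families. The Kronecker canonical form for pencils of $3\times 3$ matrices has several normal forms, and one must rule out, using the genericity of $\Lambda^{\perp}$ (no rank-one matrix in a generic pencil unless forced, the singular points being exactly the six $p_i$), that the degenerate Kronecker strata contribute components of lines not accounted for by $\bP(V) \sqcup S \sqcup \bP(V^{\vee})$; equivalently one must see that a generic line on $Y$ of "$S$-type" corresponds bijectively to its unique rank-two point. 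The incidence/dimension computations in (i) and (iii) should be routine once the correct linear subspaces are identified, and the fact that $\nu$ is finite and the three images are the components of $F(Y)$ then follows by comparing with Corollary~\ref{coro:linessing}.
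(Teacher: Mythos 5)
Your construction of the three families agrees with the paper's: for $[v]\in\bP(V)$ the locus $\{[\phi]\in\bP(\Lambda^{\perp}):\phi(v)=0\}$, dually for $\bP(V^{\vee})$, and for $s=[\sigma]\in S$ the locus $\{[\phi]:\phi(\mathrm{im}\,\sigma)\subset\mathrm{ker}\,\sigma\}$ (written $\sigma\phi\sigma=0$ in the paper). Two local points need tightening. First, to see these loci are lines rather than planes the paper does not appeal to genericity of $\Lambda$ but to Lemma~\ref{lemm:noplane} ($Y$ contains no planes), which works for every $Y$ as in Proposition~\ref{prop:transversal}. Second, for the $S$-family the naive count gives \emph{four} linear conditions (the induced $2\times 2$ map $\mathrm{im}\,\sigma\to V/\mathrm{ker}\,\sigma$ must vanish), hence a point; the reason one gets a line is that $\mathrm{tr}(\sigma\phi)=0$ for all $\phi\in\Lambda^{\perp}$ makes one of the four conditions automatic. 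You gesture at $s^{\perp}\supset\Lambda^{\perp}$ but never actually use it, and without it the third family collapses.

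Where you genuinely diverge is surjectivity and birationality. The paper disposes of both at once by a degree count: $\deg F(Y)=45$ in the Pl\"ucker embedding, while $\nu^*\cO_{F(Y)}(1)$ restricts to $\cO(3)$ on each piece, so the images have degrees $9,9,27$, forcing the images to exhaust $F(Y)$ and each restriction of $\nu$ to be birational. Your route via Kronecker pencils is viable: a pencil of singular $3\times3$ matrices has a common kernel vector, a common annihilating covector, or is of type $L_1\oplus L_1^t$, i.e.\ maps a fixed $2$-plane $U$ into a fixed line $W$; in that last case $\{M:M(U)\subset W\}^{\perp}=\{N:N(W)=0,\ \mathrm{im}\,N\subset U\}$ is four-dimensional and, since both it and $\Lambda$ sit inside the seven-dimensional orthogonal complement of the pencil, it meets $\Lambda$ nontrivially; smoothness of $S$ (no rank-one points of $\bP(\Lambda)$) then yields a rank-two $\sigma\in\Lambda$ with $\mathrm{im}\,\sigma=U$, $\mathrm{ker}\,\sigma=W$, so the line is $\ell_s$. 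You rightly flag this as the main work. However, your birationality mechanism is wrong as stated: a generic line of the $\bP(V)$-family meets $\Sigma_1$ \emph{nowhere} (recall $\bP(\Lambda^{\perp})\cap\Sigma_1=\{p_1,\ldots,p_6\}$), and $[v]$ is recovered as the common kernel of the pencil; likewise ``the unique rank-two point of $\Sigma_2$ on the line'' is meaningless since every point of the line off the $p_i$ has rank two --- $s$ lies in $\bP(\Lambda)$, not on the line, and is recovered from the pair $(U,W)$ via $\Lambda\cap\{N:N(W)=0,\ \mathrm{im}\,N\subset U\}$. With these repairs your plan closes; it costs more case analysis than the paper's one-line degree computation, but in exchange it explains intrinsically why no further families of lines exist.
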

\begin{proof}
For each point $[v] \in \bP(V)$, let 
$$\ell_{[v]}=\{y=[\phi]: \phi(v)=0 \}=
\{y=[\phi]: v \in \mathrm{ker}(\phi) \}
\subset Y$$
where $\phi \in \End(V)$ represents $y$.  
This is a linear subspace of codimension at most three
in $\bP(\Lambda^{\perp})$.  Indeed, elements of
$$\Lambda^{\perp} \cap \{M: v \subset \mathrm{ker}(M) \}$$
automatically have vanishing determinants.  Lemma~\ref{lemm:noplane}
guarantees $Y$ does not contain any planes, so we  
conclude that $\ell_{[v]}$
is a line.  

Similarly, for $[v^{\vee}] \in \bP(V^{\vee})$ we also get lines 
$$\ell_{[v^{\vee}]}=\{y=[\phi]: v^{\vee}\circ \phi =0 \}=
\{y=[\phi]: v^{\vee} \in \mathrm{ker}(\phi^t) \}   \subset Y.$$

Given $s=[\sigma]\in S$ with $\sigma \in \Lambda$,
we have the locus
$$\ell_s=\{y=[\phi]: \sigma \phi \sigma = 0 \}\subset Y.$$
Since $\sigma$ has rank two, this condition translates into the vanishing of the
$2\times 2$ matrix of the induced map
$$\mathrm{im}(\sigma) \stackrel{\phi}{\ra} V/\mathrm{ker}(\sigma).$$  
However, the orthogonality assumption 
$\mathrm{tr}(\sigma \phi)=0$ implies that 
there are only three independent linear
conditions.  In particular, $\ell_s$ is a line in $Y$.  

Combining these three constructions, we obtain the morphism $\nu$.  
We next show that $\nu$ is surjective.  Lemma~\ref{lemm:noplane}
implies that $F(Y)$ is two-dimensional.  A standard intersection theory
computation \cite[14.7.13]{Fulton} shows that $\deg F(Y)=45$ (with
respect to the Pl\"ucker embedding of the Grassmannian).  
However, we can compute the pull back
$$\nu^*\cO_{F(Y)}(1)=(\cO_{\bP(V)}(3),\cO_S(3),\cO_{\bP(V^{\vee})}(3))$$
which means that
$$\deg(\bP(V))=\deg(\bP(V^{\vee}))=9, \quad \deg(S)=27.$$
Thus all the components of $F(Y)$ are in the image of $\nu$;
furthermore, $\nu$ maps each component birationally onto its image.
\end{proof}

\begin{coro} \label{coro:fibration}
Retain the notation of Proposition~\ref{prop:getlines} and
let $y\in Y$ be a nonsingular point.  The 
components of $F(Y)$ dominated by $\bP(V)$ and $\bP(V^{\vee})$ 
each admit a unique line passing through $y$.  The component
dominated by $S$ admits four lines passing through $y$.
\end{coro}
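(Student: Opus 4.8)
The plan is to reduce each of the three assertions to a transversal intersection count. Throughout we use that the singular locus of $Y$ is $\bP(\Lambda^{\perp})\cap\Sigma_1$, so a general point $y\in Y$ is represented by $\phi\in\Lambda^{\perp}$ of rank exactly two.

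\emph{The components dominated by $\bP(V)$ and $\bP(V^{\vee})$.} By the description of $\ell_{[v]}$ in Proposition~\ref{prop:getlines}, the point $y=[\phi]$ lies on $\ell_{[v]}$ precisely when $\phi(v)=0$, i.e. $[v]\in\bP(\ker\phi)$; since $\phi$ has rank two this is the single point $[\ker\phi]$, and $y$ does lie on $\ell_{[\ker\phi]}$. (Equivalently, the fibers of the dominant rational map $Y\dashrightarrow\bP(V)$, $[\phi]\mapsto[\ker\phi]$, are exactly the lines $\ell_{[v]}$.) As $y$ is general, this line avoids $p_1,\dots,p_6$, hence by \cite[\S 1]{AK} it is a smooth point of $F(Y)$ and lies on the component $\overline{\nu(\bP(V))}$; so that component carries a unique line through $y$. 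The same argument with $\phi$ replaced by $\phi^{t}$ handles $\bP(V^{\vee})$.

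\emph{The component dominated by $S$.} Here $y\in\ell_s$ with $s=[\sigma]\in S$ if and only if $\sigma\phi\sigma=0$. Since $S=\bP(\Lambda)\cap\Sigma_2$ is disjoint from $\Sigma_1$ and an invertible $\sigma$ with $\sigma\phi\sigma=0$ would force $\phi=0$,
$$\{\,s\in S:y\in\ell_s\,\}=\bP(\Lambda)\cap\overline{\mathcal E},\qquad \mathcal E:=\{[\sigma]\in\bP(\End(V)):\mathrm{rank}\,\sigma=2,\ \sigma\phi\sigma=0\}.$$
The key remark is $(\phi\sigma)^{2}=\phi(\sigma\phi\sigma)=0$, whence $\mathrm{tr}(\sigma\phi)=0$ and $\overline{\mathcal E}\subseteq\bP(\phi^{\perp})$; also $\bP(\Lambda)\subseteq\bP(\phi^{\perp})$ because $\phi\in\Lambda^{\perp}$. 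Inside $\bP(\phi^{\perp})\simeq\bP^{7}$ the linear space $\bP(\Lambda)$ has codimension four and $\overline{\mathcal E}$ has dimension four, so for general $\phi$ this intersection is transverse and its cardinality equals $\deg\overline{\mathcal E}$. To evaluate it, note that for rank-two $\sigma$ the condition $\sigma\phi\sigma=0$ reads $\phi(\mathrm{im}\,\sigma)\subseteq\ker\sigma$, which (again since $\mathrm{rank}\,\phi=2$) forces $\ker\phi\subseteq\mathrm{im}\,\sigma$ and then $\ker\sigma=\phi(\mathrm{im}\,\sigma)$. Hence $\mathcal E$ fibers over the pencil $\mathcal P\simeq\bP^{1}$ of planes of $V$ through $\ker\phi$, with fiber $\bP(\Hom(V/\phi(I),I))\simeq\bP^{3}$ over $I\in\mathcal P$; thus $\overline{\mathcal E}=\bP(\mathcal G)$ for a rank-four bundle $\mathcal G$ on $\bP^{1}$, embedded in $\bP(\End(V))$ via $\mathcal G\hookrightarrow\End(V)\otimes\cO_{\bP^{1}}$. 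On $\mathcal P$ the tautological line $I/\ker\phi\subseteq V/\ker\phi$ has degree $-1$ and $\phi$ induces $V/\ker\phi\simeq\mathrm{im}\,\phi$; feeding this into the definition of $\mathcal G$ gives $\int_{\bP^{1}}c_{1}(\mathcal G)=-4$, so $\deg\overline{\mathcal E}=\int_{\bP(\mathcal G)}\xi^{4}=-\int_{\bP^{1}}c_{1}(\mathcal G)=4$, where $\xi=c_{1}(\cO_{\bP(\mathcal G)}(1))$.

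The main obstacle is this last bookkeeping: one must check that $\mathcal E$ is irreducible of the asserted dimension with no spurious components, that $\mathcal E\hookrightarrow\bP(\End(V))$ is an embedding (so that its image has degree $\int\xi^{4}$), and that $\bP(\Lambda)\cap\overline{\mathcal E}$ consists of four \emph{reduced} points for general $\phi$ --- equivalently, that the four lines are distinct. A less computational alternative is to grant the classical count of six lines through a general point of a cubic threefold with isolated singularities: each such line avoids $p_1,\dots,p_6$, hence is a smooth point of $F(Y)$ and lies on a single irreducible component, so subtracting the $1+1$ contributed by $\bP(V)$ and $\bP(V^{\vee})$ leaves exactly four on the component dominated by $S$.
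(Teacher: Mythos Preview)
Your argument for the $\bP(V)$ and $\bP(V^{\vee})$ components is exactly the linear-algebra verification the paper has in mind, and it correctly works for \emph{every} nonsingular $y$ (which is what the application in Theorem~\ref{theo:cubicscroll} actually uses).

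For the $S$-component you take a genuinely different route from the paper. The paper simply invokes the classical fact that a generic point of a cubic threefold lies on six lines and subtracts the $1+1$ already accounted for---precisely the ``less computational alternative'' you mention at the end. Your primary argument instead computes directly: you identify $\{s\in S: y\in\ell_s\}$ with $\bP(\Lambda)\cap\overline{\mathcal E}$ inside $\bP(\phi^{\perp})$, realize $\overline{\mathcal E}$ as the image of a $\bP^3$-bundle $\bP(\mathcal G)$ over $\bP^1$, and compute $\int\xi^4=-c_1(\mathcal G)=4$. This is correct and more informative, since it gives an explicit parametrization of the four lines. One small correction to your list of things to check: the morphism $\bP(\mathcal G)\to\bP(\End(V))$ is \emph{not} an embedding---the constant section corresponding to the rank-one matrix $\sigma_0$ with $\ker\sigma_0=\mathrm{im}\,\phi$ and $\mathrm{im}\,\sigma_0=\ker\phi$ lies in every fiber and is collapsed to a point. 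However, this is harmless: away from that section the map is injective, hence birational onto $\overline{\mathcal E}$, and birationality is all that is needed for $\deg\overline{\mathcal E}=\int_{\bP(\mathcal G)}\xi^4$. Since $S$ is smooth, $\bP(\Lambda)\cap\Sigma_1=\emptyset$, so the intersection $\bP(\Lambda)\cap\overline{\mathcal E}$ avoids $[\sigma_0]$ and the count goes through.

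Both your argument and the paper's establish the $S$-count only for \emph{generic} $y$; the statement as written says ``nonsingular,'' but the paper's own proof has the same restriction, so this is not a defect in your proposal.
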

\begin{proof}
The first statement is easily verified using linear algebra.  We second
can be deduced from the fact that a generic $y\in Y$ lies on
six lines in $Y$.  
\end{proof}

\begin{prop} \label{prop:components}
The morphism 
$$\nu: \bP(V) \sqcup S \sqcup \bP(V^{\vee}) \ra F(Y)$$
induces the following identifications:
Consider the distinguished double-six on $S$
$$\{E_1,\ldots,E_6;E_1^{\vee},\ldots,E_6^{\vee}\},$$
with each 6-tuple blowing down to a collections of points
$$\{q_1,\ldots,q_6 \} \subset  \bP(V), \quad
\{q^{\vee}_1,\ldots,q^{\vee}_6 \} \subset  \bP(V^{\vee}).$$
Let $\{\ell_1,\ldots,\ell_6\}$  and $\{\ell^{\vee}_1,\ldots,\ell^{\vee}_6\}$
be the lines in $\bP(V^{\vee})$ and $\bP(V)$ dual to these points.  
We have natural isomorphisms
for each $i$:
$$\begin{array}{rl}
\psi_i:\ell_i \stackrel{\sim}{\ra} E_i, &
\psi_i(\ell_i\cap \ell_j)=E_i\cap E_j^{\vee}\subset S, \\
\psi^{\vee}_i:\ell_i^{\vee}\stackrel{\sim}{\ra} E^{\vee}_i, &
\psi_i(\ell^{\vee}_i\cap \ell^{\vee}_j)=E^{\vee}_i\cap E_j\subset S.
\end{array}$$
\end{prop}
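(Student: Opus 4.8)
The plan is to make everything explicit in the determinantal model and then match up the various $\bP^1$'s by direct linear algebra. First I would recall from Proposition~\ref{prop:transversal} and the surrounding discussion that $S = \bP(\Lambda)\cap\Sigma_2$, that $\beta\colon S\to\bP(V)$ and $\beta^\vee\colon S\to\bP(V^\vee)$ are the two blow-down maps, and that the six double points of $Y$ are $\{p_1,\dots,p_6\}=\Sigma_1\cap\bP(\Lambda^\perp)$, each $p_i=[\phi_i]$ with $\phi_i$ of rank one. Writing $\phi_i = v_i\otimes w_i$ for $v_i\in V$, $w_i\in V^\vee$, the points $q_i=[v_i]\in\bP(V)$ and $q_i^\vee=[w_i]\in\bP(V^\vee)$ are exactly the centers of the two blow-downs, and $E_i = \beta^{-1}(q_i)$, $E_i^\vee = (\beta^\vee)^{-1}(q_i^\vee)$ are the exceptional curves of the distinguished double-six. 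This is where the combinatorial identity from Corollary~\ref{coro:linessing} enters: under $\nu$ the twelve curves $E_i,E_i^\vee$ are precisely the components of $F(Y)^{sing}$, and the line $\ell(p_i,p_j)$ joining two nodes of $Y$ is the common point $E_i\cap E_j = E_i\cap E_j^\vee = \cdots$ recorded in that corollary.

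Next I would identify the curves $\ell_i\subset\bP(V^\vee)$ and $\ell_i^\vee\subset\bP(V)$ with the lines $\nu$ produces. Here $\ell_i$ is the line in $\bP(V^\vee)$ dual to $q_i=[v_i]$, i.e.\ $\ell_i=\bP(\{\xi\in V^\vee : \xi(v_i)=0\})$, and under the construction of $\nu$ on the $\bP(V^\vee)$-component it maps to the family of lines $\ell_{[\xi]}\subset Y$. I claim that as a family of lines on $Y$ this coincides with the family $\{\ell_s : s\in E_i\}$ coming from the $S$-component: both parametrize exactly the lines of $Y$ through the node $p_i=[v_i\otimes w_i]$. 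Indeed a point $y=[\phi]$ lies on $\ell_{[\xi]}$ iff $\xi\in\ker(\phi^t)$, and $\xi$ ranges over the plane $v_i^\perp$; on the other hand $\ell_s$ for $s=[\sigma]\in E_i$ (so $\ker(\sigma)\ni$ the line through which $\beta$ contracts $E_i$, which is $v_i$) passes through $p_i$ as well. So both reconstruct the $\bP^1$ of lines on $Y$ through $p_i$, and $\psi_i$ is defined as the composite $\ell_i \to \{$lines through $p_i\} \to E_i$. That $\psi_i$ is an isomorphism follows because $\nu$ maps each component birationally onto its image (Proposition~\ref{prop:getlines}) and these are smooth rational curves (Corollary~\ref{coro:linessing}), so a birational morphism between them is an isomorphism; symmetrically one gets $\psi_i^\vee\colon\ell_i^\vee\xrightarrow{\sim}E_i^\vee$.

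Finally I would pin down the incidence statement $\psi_i(\ell_i\cap\ell_j)=E_i\cap E_j^\vee$. The point $\ell_i\cap\ell_j\in\bP(V^\vee)$ is the functional $\xi$ vanishing on both $v_i$ and $v_j$; tracing through the identification above, the corresponding line on $Y$ is the unique line of $Y$ passing through both $p_i$ and $p_j$, namely $\ell(p_i,p_j)$. By Corollary~\ref{coro:linessing} this line, viewed as a point of $F(Y)^{sing}$, equals $E_i\cap E_j = E_i\cap E_j^\vee$; since $\psi_i$ is the map to $E_i\subset S$ this forces $\psi_i(\ell_i\cap\ell_j)=E_i\cap E_j^\vee$ (the asymmetric choice of $E_j^\vee$ versus $E_j$ being dictated by the double-six incidence, $E_i$ meeting $E_j^\vee$ but not $E_j$ in $S$). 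The dual formula is symmetric. The main obstacle I anticipate is not any one step but the bookkeeping: keeping straight which of the two rulings/blow-downs a given curve belongs to, and verifying the claimed coincidence of the two families of lines on $Y$ (the one from the $S$-component and the one from the $\bP(V^\vee)$-component) as actual subfamilies of $F(Y)$ rather than merely abstractly; this is where one must use the explicit form $\phi_i=v_i\otimes w_i$ together with the transversality hypotheses of Proposition~\ref{prop:transversal} to ensure no degeneracies occur among the $q_i$ or the $\ell_i$.
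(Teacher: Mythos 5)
Your route differs from the paper's in a reasonable way: you propose to define $\psi_i$ directly as $(\nu|_{E_i})^{-1}\circ\nu|_{\ell_i}$ and then chase the distinguished points, whereas the paper first builds a projectivity of $E_i$ intrinsically on the cubic surface (the covering involution of a conic bundle, Lemma~\ref{lemm:glue}) and then matches it with the $\nu$-gluing. But as written your argument has a genuine gap at precisely the step the proposition is about. You justify the coincidence of the $\ell_i$-family and the $E_i$-family by saying that ``both reconstruct the $\bP^1$ of lines on $Y$ through $p_i$.'' There is no such single $\bP^1$: by Proposition~\ref{prop:project} and Corollary~\ref{coro:linessing} the lines through a node form a curve with \emph{two} rational components --- that is exactly why both an $E_i$ and an $E_i^{\vee}$ occur --- so ``both families pass through $p_i$'' does not determine $\psi_i$, and it is the choice of branch that decides whether $\ell_i$ is glued to $E_i$ or to $E_i^{\vee}$, equivalently whether $\ell_i\cap\ell_j$ lands at $E_i\cap E_j^{\vee}$ or at $E_i^{\vee}\cap E_j$. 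You carry out the computation on one side only (for $\xi\in\ell_i$ the generic $[\phi]\in\ell_{[\xi]}$ has rank two and $\mathrm{im}(\phi)=\mathrm{ker}(\xi)\ni v_i$); you need the matching statement on the $S$-side: for $s=[\sigma]\in E_i$, so $\mathrm{ker}(\sigma)=\mathrm{im}(p_i)$, the condition $\sigma\phi\sigma=0$ gives $\phi(\mathrm{im}(\sigma))\subset\mathrm{ker}(\sigma)$, hence at rank-two points $\mathrm{im}(\phi)\supset\mathrm{im}(p_i)$. Then both families sweep out the scroll $T_{v_i}$ and lie on the branch $\{\mathrm{im}(\phi)\supset\mathrm{im}(p_i)\}$, while $E_i^{\vee},\ell_i^{\vee}$ give the branch $\{\mathrm{ker}(\phi)\subset\mathrm{ker}(p_i)\}$. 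This is the content of the paper's Lemma~\ref{lemm:identify}, and without it neither your definition of $\psi_i$ nor the ``asymmetric choice'' in the incidence formula is pinned down.

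Two smaller repairs are also needed. First, Proposition~\ref{prop:getlines} says $\nu$ is birational on each of the three surfaces; that does not by itself make the restrictions $\nu|_{\ell_i}$, $\nu|_{E_i}$ birational onto the image curve. A short degree count fixes this: $\nu^{*}\cO_{F(Y)}(1)$ has degree $3$ on these lines, and each branch of lines through $p_i$ is a twisted cubic inside the linearly embedded Pl\"ucker $\bP^3$ of lines through a point (Proposition~\ref{prop:project}), so the restrictions have degree one and, being morphisms of smooth rational curves, are isomorphisms. Second, in the incidence step you implicitly use that the only points $s\in S$ with $\ell_s=\ell(p_i,p_j)$ lie on $(E_i\cup E_i^{\vee})\cap(E_j\cup E_j^{\vee})$; this again comes from the determinantal description ($\sigma p_i\sigma=0$ iff $\sigma(v_i)=0$ or $w_i\circ\sigma=0$, with $p_i=v_i\otimes w_i$), not from Corollary~\ref{coro:linessing}, which lives downstairs in $F(Y)$. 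With these points supplied, your argument becomes a correct and somewhat more direct alternative to the paper's proof, which instead reduces to automorphisms of $E_i$ via the identification $\ell_i\simeq E_i$, invokes the conic-bundle involution, and concludes by uniqueness of a projectivity through the five marked points.
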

\begin{proof}
We break up the argument into a sequence of lemmas:
\begin{lemm} \label{lemm:identify}
The morphism
$\nu$ maps $E_i,E_i^{\vee}\subset S,\ell_i\subset \bP(V^{\vee})$
and $\ell_i^{\vee} \subset \bP(V)$ to the locus $C_i$ of lines passing
through $p_i$.  Furthermore, $E_i$ and $\ell_i$ parametrize $y=[\phi]\in Y$ such
that $\mathrm{im}(\phi)\supset \mathrm{im}(p_i)$;  $E_i^{\vee}$ and $\ell_i^{\vee}$
parametrize $y=[\phi]$ such that $\mathrm{ker}(\phi) \subset \mathrm{ker}(p_i)$.
Here we regard the singularity $p_i \in Y$ as an element
$\Lambda^{\perp} \cap \Sigma_1.$
\end{lemm}
\begin{proof}
The determinantal description of $S$ identifies
$$E_i=\{s\in\Lambda: \mathrm{ker}(s)=\mathrm{im}(p_i) \}.$$
Similarly, we have
$$E^{\vee}_i=\{s\in\Lambda: \mathrm{im}(s)=\mathrm{ker}(p_i) \}.$$
On the other hand,
$$\ell_i=\{\mathrm{im}(s):
 s\in \Lambda \text{ with } \mathrm{ker}(s)=\mathrm{im}(p_i) \} \subset \bP(V^{\vee})$$
and 
$$\ell^{\vee}_i=\{\mathrm{\ker}(s)):
 s\in \Lambda \text{ with }\mathrm{im}(s)=\mathrm{ker}(p_i) \} \subset \bP(V).$$

Thus for $s_i=[\sigma] \in E_i$
$$\nu(s_i)=[\{y = [\phi]: \phi(\mathrm{im}(\sigma))\subset \mathrm{ker}(\sigma)=\mathrm{im}(p_i) \}]$$
which is a line through $p_i$.  
On the other hand, for $v^{\vee}_i \in \ell_i$ (where $v^{\vee}_i \in V^{\vee}$ satisfies $v^{\vee}_i(\mathrm{im}(p_i))=0$)
we have
$$\nu(v^{\vee}_i)=[\{y=[\phi]: v_i^{\vee}(\mathrm{im}(\phi))=0 \}]$$
which is also a line through $p_i$.  As we vary $s_i \in E_i $ and $v^{\vee}_i \in \ell_i$, we get the locus of $y=[\phi]$
such that $\mathrm{im}(\phi)\supset \mathrm{im}(p_i)$.  

The analogous statements for $E_i^{\vee}$ and $\ell_i^{\vee}$ are proven similarly.  
\end{proof}

There is an obvious identification
\begin{equation} \label{eq:obvious}
\begin{array}{c}
\ell_i=\bP(q_i^{\perp})=\bP((V/q_i)^{\vee})=\bP(V/q_i)
=\bP(\Hom(q_i,V/q_i))  \\
=\bP(T_{q_i}\bP(V))=E_i; 
\end{array}
\end{equation}
note that if $W$ is a two-dimensional vector space then the isomorphism
$W=W^{\vee}\otimes \bigwedge^2 W$ induces a natural isomorphism $\bP(W)=\bP(W^{\vee})$.  
This is {\em not} the gluing inducing $\nu$.  
However, note that this takes the points $\ell_i \cap \ell_j$ to the
intersections $E_i \cap \fl_{ij}$, 
where $\fl_{ij}$ is the
proper transform of the line joining $q_i$ and $q_j$.  
Using (\ref{eq:obvious}), it suffices to express
$\psi_i$ and $\psi_i^{\vee}$ as automorphisms of $E_i$ and $E_i^{\vee}$.  

The gluings $\ell_i\simeq E_i$ and $\ell_i^{\vee}\simeq E_i^{\vee}$ 
will be obtained from the following:  
\begin{lemm} \label{lemm:glue}
There exists a projectivity $\psi_i:E_i\ra E_i$ mapping 
$E_i \cap \fl_{ij}$ to $E_i \cap E_j^{\vee}$ for each $j\neq i$.  
The analogous
statement holds for $E_i^{\vee}$.
\end{lemm}
\begin{proof}
For notational simplicity we take $i=1$.  Consider the conic
bundle $S\ra \bP^1$ given by the pencil of cubics on $\bP(V)$
double at $q_1$ and containing $q_2,\ldots,q_6$.  The degenerate
fibers are
$$\fl_{1j} \cup E_j^{\vee}, j=2,\ldots,6.$$
The curve $E_1$ is a bisection of this conic bundle, so there
is a covering involution $\psi_1:E_1 \ra E_1$ taking 
$\fl_{1j} \cap E_1$ to $E_j^{\vee} \cap E_1$.  
\end{proof}

It remains to check that this is in fact the identification
induced by $\nu$.  However, we know from Corollary~\ref{coro:linessing} 
that $\nu$ glues the points $E_i\cap E_j^{\vee}$, $E_i^{\vee}\cap E_j$,
to $[\ell(p_i,p_j)]$, the line in $Y$ joining $p_i$ and $p_j$.  
Now $\ell_i$ and $\ell_j$ meet in $\bP(V^{\vee})$, and $\ell_i^{\vee}$
and $\ell_j^{\vee}$ meet in $\bP(V)$;  thus these points must also
be mapped by $\nu$ to $[\ell(p_i,p_j)]$.  In general, the
isomorphisms $\psi_i$ and $\psi^{\vee}_i$ are the unique ones
identifying all these points.  
\end{proof}

\begin{prop}
The morphism $\nu$ is obtained by gluing $\bP(V),S,$
and $\bP(V^{\vee})$ using the identifications described in
Proposition~\ref{prop:components}.
\end{prop}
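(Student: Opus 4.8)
The plan is to show that the birational morphism $\nu$ restricted to each of the three components, together with the gluing data recorded in Proposition~\ref{prop:components}, exhibits $F(Y)$ as the pushout of the diagram
$$\bP(V) \hookleftarrow \bigsqcup_i \ell_i^{\vee} \xrightarrow{\text{via } E_i^{\vee}} S \hookleftarrow \bigsqcup_i \ell_i \xrightarrow{\text{via } E_i} \bP(V^{\vee}),$$
so that $F(Y)$ carries the universal property of the glued scheme. First I would observe that, by Proposition~\ref{prop:getlines}, $\nu$ is a surjective morphism that is birational on each component, hence (since $F(Y)$ is a surface and the components are normal) it is the normalization of $F(Y)$ on the complement of $F(Y)^{sing}$. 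Thus $\nu$ is an isomorphism away from the preimage of the singular locus, and the content of the proposition is entirely concentrated along $F(Y)^{sing}$.

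Next I would use Corollary~\ref{coro:linessing}: $F(Y)^{sing}$ is the union of the twelve rational curves $E_j, E_j^{\vee}$ (the loci $C_j$ of lines through $p_j$), glued along the fifteen points $[\ell(p_i,p_j)]$. By Lemma~\ref{lemm:identify}, $\nu^{-1}(C_i)$ consists of exactly the four curves $E_i, E_i^{\vee}\subset S$, $\ell_i\subset\bP(V^{\vee})$, and $\ell_i^{\vee}\subset\bP(V)$; and $\nu$ carries $E_i$ and $\ell_i$ isomorphically onto the same curve in $F(Y)$ (the locus with $\mathrm{im}(\phi)\supset\mathrm{im}(p_i)$), while $E_i^{\vee}$ and $\ell_i^{\vee}$ map isomorphically onto the other curve (the locus with $\mathrm{ker}(\phi)\subset\mathrm{ker}(p_i)$). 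So over each $C_i$ the map $\nu$ factors through a specific identification $\ell_i\simeq E_i$ and $\ell_i^{\vee}\simeq E_i^{\vee}$, and the whole question reduces to identifying these isomorphisms. Proposition~\ref{prop:components} already computes them: it produces the projectivities $\psi_i,\psi_i^{\vee}$ and checks, via the conic-bundle covering involution of Lemma~\ref{lemm:glue} together with the constraint that $\nu$ must send $E_i\cap E_j^{\vee}$, $E_i^{\vee}\cap E_j$, $\ell_i\cap\ell_j$, and $\ell_i^{\vee}\cap\ell_j^{\vee}$ all to the common point $[\ell(p_i,p_j)]$, that $\psi_i$ and $\psi_i^{\vee}$ are \emph{uniquely} determined. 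Hence the gluing induced by $\nu$ coincides with the prescribed one.

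To conclude, I would verify the universal property directly: given any scheme $T$ with compatible maps from $\bP(V)$, $S$, $\bP(V^{\vee})$ agreeing under the $\psi_i,\psi_i^{\vee}$, one checks set-theoretically that they descend to a map from the topological pushout (this is immediate from the description of $\nu^{-1}(C_i)$), and then that the map is a morphism of schemes because $F(Y)$ is seminormal along $F(Y)^{sing}$—its local rings are the expected fibered products of the branch local rings, since the fifteen singular points are ordinary (four smooth branches meeting as recorded in Corollary~\ref{coro:linessing}) and elsewhere along $C_i$ the singularity is a transverse union of two smooth branches. The main obstacle I anticipate is precisely this last point: establishing that $F(Y)$ is seminormal (equivalently, that its structure sheaf is exactly the subsheaf of $\nu_*\cO$ of functions agreeing on the glued curves), rather than some non-reduced or non-seminormal thickening. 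This should follow from the fact that $F(Y)$ is a component-wise reduced surface whose normalization and conductor we have explicitly identified, but making the comparison of local rings at the fifteen triple-type points precise is the delicate step; everything else is bookkeeping with the identifications already in hand.
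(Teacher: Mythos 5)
Your overall strategy is the same as the paper's: factor $\nu$ through the surface obtained by gluing along the $\psi_i,\psi_i^{\vee}$, observe the resulting map to $F(Y)$ is bijective (an isomorphism over the smooth locus, and a bijection over $F(Y)^{sing}$ by the identifications of Proposition~\ref{prop:components} and Corollary~\ref{coro:linessing}), and then upgrade ``bijective'' to ``isomorphism'' via a statement about the local structure of $F(Y)$ along its singular curves. But the step you explicitly flag as delicate --- that $\cO_{F(Y)}$ is exactly the subsheaf of $\nu_*\cO$ of functions compatible with the gluing, i.e.\ that $F(Y)$ is seminormal with no embedded or non-reduced structure along $F(Y)^{sing}$ --- is precisely the nontrivial content, and you do not supply an argument for it. Saying it ``should follow from the fact that $F(Y)$ is a component-wise reduced surface whose normalization and conductor we have explicitly identified'' is circular: knowing the normalization and the set-theoretic gluing does not by itself exclude embedded points or a non-seminormal scheme structure on the Fano scheme along the curves $E_i$, where $F(Y)$ is singular, and no amount of bookkeeping at the fifteen special points can rule that out without some a priori scheme-theoretic control of $F(Y)$.

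The missing idea in the paper is global, not local: $F(Y)$ is the degeneracy locus of a section of a vector bundle on $\Gr(2,5)$, and by Lemma~\ref{lemm:noplane} it has the expected dimension two; hence it is a local complete intersection and in particular Cohen--Macaulay. This is what guarantees there are no embedded points and, since its codimension-one singularities are transverse unions of smooth branches (so seminormality can be checked in codimension one for an $S_2$ scheme), that $F(Y)$ is seminormal. Once that is in place, the universal property of seminormalization immediately forces the finite bijection $F'\to F(Y)$ from the glued surface to be an isomorphism --- which is exactly the conclusion your pushout formulation needs. So your route is not wrong, but as written it has a genuine gap at its central point; to close it you need the lci/Cohen--Macaulay input (or an equivalent computation of the scheme structure of $F(Y)$ along $F(Y)^{sing}$), not merely the set-theoretic and normalization data. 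A secondary, smaller omission: ``birational onto its image'' does not by itself prevent $\nu|_S$ from contracting curves, which the paper rules out by passing to the normalization of $\nu(S)$ and noting the images of the double-six are pairwise disjoint.
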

\begin{proof}
Let $F'$ denote the surface obtained by gluing $\bP(V),S$, and
$\bP(V^{\vee})$ using the identifications $\psi_i$ and $\psi_i^{\vee}$.  
Again, $F'$ contains twelve distinguished rational curves
$$\ell_i=E_i,\ell_i^{\vee}=E_i, \ i=1,\ldots,6$$
and {\em fifteen} distinguished points
$$\ell_i\cap \ell_j=E_i \cap E_j^{\vee}=\ell_i^{\vee}\cap \ell^{\vee}_j=E_i^{\vee} \cap E_j,$$
which map surjectively onto $F(Y)^{sing}$ (by Corollary~\ref{coro:linessing}).  

We have already seen that $\nu$ factors through $F'$;  it only remains
to prove that the induced morphism $F' \ra F(Y)$ is an isomorphism.
Our analysis of the gluings over $F(Y)^{sing}$ shows that
$\xi$ is a bijection over $F(Y)^{sing}$.  

We first check that $\nu$ is the normalization of $F(Y)$.  
Proposition~\ref{prop:getlines} shows that $\nu$ maps each irreducible
component birationally onto its image.  It follows that the restrictions
$$\bP(V) \ra \nu(\bP(V)), \quad \bP(V^{\vee}) \ra \nu(\bP(V^{\vee}))$$
are normalization maps.  Consider the factorization of $\nu|S$
through the normalization of its image
$$S \ra \nu(S)' \ra \nu(S);$$
this reverses the identifications induced by the $\psi_i$ and $\psi_i^{\vee}$.  
The images of the six lines
$E_1,\ldots,E_6,$ (and $E_1^{\vee},\ldots,E_6^{\vee}$) in 
$\nu(S)'$ are pairwise disjoint.  Hence $S\ra \nu(S)'$
contracts no curves and thus is an isomorphism;  $S$ is the
normalization of $\nu(S)$.  

This analysis implies $F' \ra F(Y)$ is bijective.  

The Fano scheme $F(Y)$ is defined by the degeneracy
locus of a vector bundle over the Grassmannian $\mathrm{Gr}(2,5)$,
with the expected dimension (by Lemma~\ref{lemm:noplane}).  Thus $F(Y)$
is a local complete intersection scheme and thus 
is Cohen-Macaulay;  hence it
has no embedded points and is seminormal.  The universal property
of seminormalization then implies that $F' \ra F(Y)$ is
an isomorphism.
\end{proof}

The determinantal description offers a transparent construction for the 
cubic scrolls on $Y$.  Each point $[v] \in \bP(V)$ determines a line
in $\bP(V^{\vee})$, which may be interpreted as a ruled surface
$T_v\subset Y$ using the analysis of the components of $F(Y)$ in 
Proposition~\ref{prop:getlines}:
\begin{prop} \label{prop:scroll}
For each $[v]\in \bP(V)$, the locus
$$T_v=\{ y \in Y: v \in \mathrm{im}(y) \}$$
is a cubic scroll.  The ruling arises from
the morphism
$$\begin{array}{rcl}
T_v & \ra & \bP(V/\mathrm{span}(v)) \\
 y & \mapsto & \mathrm{im}(y)
\end{array}$$
with fibers $\ell_{[v^{\vee}]}$, where $v^{\vee}\neq 0\in V^{\vee}$ 
with $v^{\vee}(v)=0$.  

Similarly, for $[v^{\vee}] \in \bP(V^{\vee})$ the locus
$$T_{v^{\vee}}=\{ y\in Y: v^{\vee}(\mathrm{ker}(y))=0 \}$$
is a cubic scroll.  Each union 
$$T_{v} \cup T_{v^{\vee}}=Y \cap Q$$
where $Q$ is a quadric hypersurface.  

If $s\in S$ and $\ell_s$ denotes the corresponding line in $Y$ then
$\ell_s\subset T_{\beta(s)}$ (resp. $T_{\beta^{\vee}(s)})$
is a section of the ruling.  
\end{prop}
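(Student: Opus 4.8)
The plan is to verify each of the three assertions directly from the determinantal descriptions of $T_v$, $T_{v^\vee}$, and $\ell_s$ given in Propositions~\ref{prop:getlines} and~\ref{prop:scroll}, together with elementary linear algebra on $\End(V)$ restricted to $\bP(\Lambda^\perp)$.

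First I would establish that $T_v$ is a cubic scroll. The locus $T_v=\{y=[\phi]\in Y:v\in\mathrm{im}(\phi)\}$ is covered by the lines $\ell_{[v^\vee]}$ for $v^\vee$ with $v^\vee(v)=0$: indeed, each $\ell_{[v^\vee]}=\{[\phi]:v^\vee\circ\phi=0\}$ consists of $\phi$ with $\mathrm{im}(\phi)\subset\ker(v^\vee)$, and as $[v^\vee]$ ranges over the line $\bP(v^\perp)\subset\bP(V^\vee)$ these sweep out exactly the $\phi$ with $v\in\mathrm{im}(\phi)$ (the rank-two ones, since $\det\phi=0$ on $Y$). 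So the stated map $T_v\to\bP(V/\mathrm{span}(v))\cong\bP^1$, $y\mapsto\mathrm{im}(y)$, has the lines $\ell_{[v^\vee]}$ as fibers. To identify the degree I would compute $[T_v]$ as a cohomology class: $T_v$ is cut out on $Y$ by the linear conditions forcing $v\in\mathrm{im}(\phi)$, which translate (using $\dim V=3$) into a codimension-two linear section of $\bP(\Lambda^\perp)$ intersected with the cubic $Y$ — giving a surface of degree $3$ in a $\bP^3$, i.e. a cubic surface swept by a pencil of lines, which (being smooth, as it lies on the generic $Y$) is the cubic scroll $\bF_1$. The argument for $T_{v^\vee}$ is the transpose of this, replacing $\phi$ by $\phi^t$ throughout and using $\ker(\phi)\subset\ker(v^\vee)$.

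Next, for the quadric statement $T_v\cup T_{v^\vee}=Y\cap Q$: the condition $v\in\mathrm{im}(\phi)$ "or" $v^\vee(\ker\phi)=0$ should be expressible as the vanishing of a single quadratic form $q$ on $\bP(\Lambda^\perp)$. The natural candidate is, up to scalar, $q([\phi])=v^\vee(\phi^{\mathrm{adj}}(v))$ or equivalently a suitable $2\times 2$ minor-type expression: since each $\phi\in\Lambda^\perp$ has rank $\le 2$ off the six nodes, the classical adjugate $\phi^{\mathrm{adj}}$ has rank $\le 1$, with $\mathrm{im}(\phi^{\mathrm{adj}})=\ker(\phi)$ and $\ker(\phi^{\mathrm{adj}})=\mathrm{im}(\phi)$ for rank-two $\phi$; hence $v^\vee(\phi^{\mathrm{adj}}v)=0$ iff $v\in\mathrm{im}(\phi)$ or $v^\vee\in(\ker\phi)^\perp$, i.e. iff $[\phi]\in T_v\cup T_{v^\vee}$. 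Since $\phi\mapsto\phi^{\mathrm{adj}}$ is quadratic in the entries of $\phi$, this $q$ is a quadratic form, cutting out the desired $Q$; a degree count ($2\cdot 3=6=9+9-\deg(\text{excess})$, or rather $\deg T_v+\deg T_{v^\vee}=3+3$ versus $\deg(Y\cap Q)=6$) confirms the containment is an equality of schemes.

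Finally, for $s=[\sigma]\in S$ with $\sigma$ of rank two, I would show $\ell_s\subset T_{\beta(s)}$ is a section. By construction $\beta(s)=[\ker(\sigma)]\in\bP(V)$ (or $[\mathrm{im}(\sigma)]\in\bP(V^\vee)$ for $\beta^\vee$), and $\ell_s=\{[\phi]:\sigma\phi\sigma=0\}$. One checks each such $\phi$ satisfies $\ker(\sigma)\subset\mathrm{im}(\phi)$ — using that $\sigma\phi\sigma=0$ forces $\phi(\mathrm{im}\,\sigma)\subset\ker\sigma$, and combined with $\mathrm{tr}(\sigma\phi)=0$ and generic rank two this pins $\mathrm{im}(\phi)\supset\ker(\sigma)$ for all but finitely many points, hence everywhere on the line — so $\ell_s\subset T_{\ker\sigma}=T_{\beta(s)}$. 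That $\ell_s$ is a section rather than a ruling fiber follows because the fibers of $T_v\to\bP^1$ are precisely the $\ell_{[v^\vee]}$, which come from the $\bP(V^\vee)$-component of $F(Y)$, whereas $\ell_s$ comes from the $S$-component; by Corollary~\ref{coro:fibration} a generic point of $T_v$ lies on the unique ruling line through it plus on $\ell_s$-type lines, and the map $\ell_s\to\bP(V/\mathrm{span}(v))$ must then be finite, hence (both being $\bP^1$) an isomorphism, i.e. $\ell_s$ is a section.

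The main obstacle I anticipate is the quadric statement: producing the quadratic form $q$ cleanly and proving that its vanishing locus on $Y$ is \emph{exactly} $T_v\cup T_{v^\vee}$ with the right scheme structure (no embedded or excess components) requires care with the adjugate construction and with the behavior at the six nodes $p_1,\dots,p_6$, where $\phi$ drops to rank one and $\phi^{\mathrm{adj}}=0$; one must check these nodes lie on $Q$ and contribute no excess. The scroll and section claims are comparatively routine linear algebra once the component structure of $F(Y)$ from Proposition~\ref{prop:getlines} is in hand.
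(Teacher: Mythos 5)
Your overall skeleton (ruling by the lines $\ell_{[v^{\vee}]}$, a quadric built from the adjugate, and the containment $\ell_s\subset T_{\beta(s)}$) is sound, and your quadric $q([\phi])=v^{\vee}(\mathrm{adj}(\phi)\,v)$ is, up to scale, exactly the paper's: in a basis $v,v',v''$ with $v^{\vee}(v')=v^{\vee}(v'')=0$ it is the cofactor $b_{22}b_{33}-b_{23}b_{32}$. But your argument that $T_v$ is a \emph{cubic scroll} does not work as written. The condition $v\in\mathrm{im}(\phi)$ is not linear: its closure in $\bP(\End(V))$ is cut out by the three $2\times 2$ minors of the two rows complementary to $v$ (equivalently $\mathrm{adj}(\phi)v=0$), a codimension-two determinantal locus of degree three, namely the cone over the Segre $\bP^1\times\bP^2\subset\bP^5$ with vertex a plane; it is not a codimension-two \emph{linear} section, and it already lies inside $\Sigma_2$, so one does not intersect with $Y$ again. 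Your bookkeeping is correspondingly off: a codimension-two linear section of $\bP(\Lambda^{\perp})\simeq\bP^4$ met with the cubic $Y$ would be a curve, not a surface, and in any case a cubic scroll spans $\bP^4$, not $\bP^3$ (a degree-three surface in $\bP^3$ ruled by a pencil of lines is a cone over a twisted cubic, not $\bF_1$). The correct identification, which is the paper's route, is that $\bP(\Lambda^{\perp})$ meets the cone over $\bP^1\times\bP^2$ in a hyperplane section of $\bP^1\times\bP^2$ (the generic $\bP^4$ misses the vertex plane), and such a hyperplane section is a cubic scroll.

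The second genuine gap is the last step of your section argument: from ``$\ell_s$ is not a ruling fiber'' you conclude that $\ell_s\to\bP(V/\mathrm{span}(v))$ is finite, ``hence (both being $\bP^1$) an isomorphism.'' Finiteness only gives a non-constant map $\bP^1\to\bP^1$, which can have any degree; being a section means degree one, and nothing in your argument excludes $\ell_s$ meeting some ruling line twice. The paper closes this by a direct count: a ruling fiber of $T_{\beta(s)}$ consists of the $\phi$ with image a fixed two-dimensional $U\supset\ker(\sigma)$, and imposing this on the line $\ell_s=\{[\phi]:\sigma\phi\sigma=0\}$ is one additional linear condition, so each fiber meets $\ell_s$ in exactly one point. (Your concern about scheme structure at the six rank-one points in the quadric statement is reasonable but secondary: those points lie on the closures of both $T_v$ and $T_{v^{\vee}}$, and the assertion needed is the set-theoretic equality, with $3+3=2\cdot 3$ matching degrees.)
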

\begin{proof}
Choose a basis $v,v',v''$ of $V$ such that $v^{\vee}(v')=v^{\vee}(v'')=0$.  
The closure of the locus of rank-two matrices with image
containing $v$ can be written
$$B=\left( \begin{matrix} b_{11} & b_{12} & b_{13} \\
		        b_{21} & b_{22} & b_{23} \\
			b_{31} & b_{32} & b_{33} \end{matrix} \right)$$
where the bottom two rows are linearly dependent.  This defines
a closed subset in $\bP(\End(V))$.  Geometrically, this is
a cone over the Segre embedding
$$\bP^1 \times \bP^2  \subset \bP^5$$
with a vertex a projective plane.  Intersecting this with $\Lambda^{\perp}$
yields a hyperplane section of $\bP^1 \times \bP^2$, which is a cubic scroll.

On the other hand, the closure of the locus of rank-two matrices with
kernel annihilated by $v^{\vee}$ are those whose right two columns are linearly
dependent.  The union of these two loci are given by the intersection
$$\{\det(B)=0\} \cap \{ b_{22}b_{33}-b_{23}b_{32}=0 \} \subset \bP(\mathrm{End}(V)),$$
i.e., the intersection of $\Sigma_2$ with a quadric hypersurface.  

Recall that $\ell_s$ was defined in the proof of Proposition~\ref{prop:getlines}
$$\ell_s=\{y=[\phi]:\sigma \phi \sigma =0 \}.$$
Fix a ruling in $T_{\beta(s)}$: 
Regarding $\beta(s)=\mathrm{ker}(\sigma)$ as a line in $V$, we choose
a two-dimensional subspace $\mathrm{ker}(\sigma) \subset U\subset V$,
and consider the matrices $\phi$ with image $U$.  
This imposes one additional linear constraint
on the matrix entries of $\phi$, so 
each ruling meets $\ell_s$ in one point.  

\end{proof}

\begin{prop} \label{prop:twisted}
For each $s\in S$, 
$$T_{\beta(s)} \cap T_{\beta^{\vee}(s)}= \ell_s \cup R_s$$
where $R_s\subset Y $ is a twisted quartic curve
passing through the singularities $p_1,\ldots,p_6$.  
\end{prop}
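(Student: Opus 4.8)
The plan is to analyze the intersection $T_{\beta(s)} \cap T_{\beta^{\vee}(s)}$ inside $Y$ using the determinantal description of the two scrolls given in Proposition~\ref{prop:scroll}. Write $\sigma \in \Lambda$ for a representative of $s$, so $\beta(s) = \mathrm{ker}(\sigma)$ and $\beta^{\vee}(s) = \mathrm{im}(\sigma)$ (viewed as a line in $V^{\vee}$, i.e.\ a hyperplane in $V$). By Proposition~\ref{prop:scroll} the first scroll $T_{\beta(s)}$ consists of those $y = [\phi] \in Y$ with $\mathrm{ker}(\sigma) \subset \mathrm{im}(\phi)$, and the second $T_{\beta^{\vee}(s)}$ consists of those $y = [\phi]$ with $\mathrm{ker}(\phi) \subset \mathrm{ker}(\sigma)$; moreover each is a hyperplane section $Y \cap Q_1$, $Y \cap Q_2$ of $Y$ by a quadric, and $T_{\beta(s)} \cup T_{\beta^{\vee}(s)} = Y \cap Q$ for a single quadric $Q$. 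First I would set up coordinates as in the proof of Proposition~\ref{prop:scroll} (choosing a basis $v, v', v''$ adapted to $\sigma$) and write the two scroll conditions as explicit quadratic equations; their common zero locus in $\bP(\End(V))$, intersected with $\bP(\Lambda^{\perp})$, is a curve of degree $9 \cdot 9 / \deg Y = \ldots$ — more precisely, since each $T$ is a cubic scroll (degree $3$ surface) in $\bP^4$ and both lie on the cubic $Y$, the intersection $T_{\beta(s)} \cap T_{\beta^{\vee}(s)}$ has degree $5$ inside $Y$ by the intersection-theoretic computation $T_{\beta(s)} \cdot T_{\beta^{\vee}(s)}$ on $Y$ (equivalently, both scrolls lie in the net of quadrics cutting each other out, and one counts via Bézout). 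The line $\ell_s$ is visibly contained in both scrolls — by the last sentence of Proposition~\ref{prop:scroll}, $\ell_s$ is a section of the ruling of each $T_{\beta(s)}$ and $T_{\beta^{\vee}(s)}$ — so it splits off as a degree-$1$ component, leaving a residual curve $R_s$ of degree $4$.

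Next I would identify $R_s$ precisely. The key observations are: (i) a point $y = [\phi] \in T_{\beta(s)} \cap T_{\beta^{\vee}(s)}$ satisfies both $\mathrm{ker}(\sigma) \subset \mathrm{im}(\phi)$ and $\mathrm{ker}(\phi) \subset \mathrm{ker}(\sigma)$, while $y \in \ell_s$ is the stronger condition $\sigma\phi\sigma = 0$; (ii) the six singular points $p_1,\ldots,p_6 = \Sigma_1 \cap \bP(\Lambda^{\perp})$ are rank-one matrices, and I claim each $p_i$ lies on both scrolls. Indeed by Lemma~\ref{lemm:identify}, $E_i \subset S$ (resp.\ $E_i^{\vee}$) parametrizes $y = [\phi]$ with $\mathrm{im}(\phi) \supset \mathrm{im}(p_i)$ (resp.\ $\mathrm{ker}(\phi) \subset \mathrm{ker}(p_i)$), and since $p_i$ itself has $\mathrm{im}(p_i) \supset \mathrm{im}(p_i)$ and $\mathrm{ker}(p_i) \subset \mathrm{ker}(p_i)$ trivially, one checks directly from the rank-one normal forms that $\mathrm{ker}(\sigma) \subset \mathrm{im}(p_i)$ fails in general — so the real content is to verify, using the explicit coordinates, that $p_i \in Q_1 \cap Q_2 \cap Y = T_{\beta(s)} \cap T_{\beta^{\vee}(s)}$ but $p_i \notin \ell_s$ (the latter because a generic line $\ell_s$ avoids the singularities of $Y$, by Proposition~\ref{prop:getlines} and Corollary~\ref{coro:linessing}, since $S$ dominates a component of $F(Y)$ meeting $F(Y)^{sing}$ only along the $E_i, E_i^{\vee}$ and $s$ is generic). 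Hence all six $p_i$ lie on the degree-$4$ residual curve $R_s$. Finally I would show $R_s$ is a connected, reduced, rational curve of arithmetic genus $0$ — a "twisted quartic" — by the following count: $R_s$ lies on the quadric $Q$ (since $T_{\beta(s)} \cup T_{\beta^{\vee}(s)} = Y \cap Q$, both scrolls lie on $Q$, hence so does their intersection), it has degree $4$ and passes through the $6$ nodes of $Y$; a degree-$4$ curve on a quadric in $\bP^4$ through six general points has the numerics of a rational normal quartic, and reducedness/irreducibility follows from the genericity of $s$ by a dimension count on the parameter space $S$.

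The main obstacle I anticipate is the last step: pinning down that $R_s$ is \emph{irreducible} (a genuine twisted quartic, not, say, a union of conics or a quartic plus embedded/degenerate pieces) for $s$ outside some proper closed subset. The degree and the passage through $p_1,\ldots,p_6$ are forced by the intersection-theoretic bookkeeping, but ruling out degenerations requires either an explicit generic computation in the chosen coordinates or a clean geometric argument — e.g.\ showing that the family $\{R_s\}_{s \in S}$ sweeps out $Y$ with the right multiplicity, or relating $R_s$ to the residual intersection in the pencil of quadrics through the $p_i$ so that its class in $H^*(Y)$ is determined, or exhibiting $R_s$ as the image under $\nu$ (or a related map) of a known curve on one of the components $\bP(V), S, \bP(V^{\vee})$. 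I would handle the generic irreducibility by a semicontinuity argument: exhibit one explicit $s_0$ for which $R_{s_0}$ is an honest rational normal quartic (a direct matrix computation), and conclude that $R_s$ is irreducible for $s$ in a dense open subset of $S$; for the remaining $s$ one argues by flatness of the family that the limit $R_s$ still has arithmetic genus $0$ and degree $4$ and still passes through all six $p_i$, which suffices for the applications in Section~\ref{sect:example}.
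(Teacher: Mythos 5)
Your skeleton — split off $\ell_s$, study a residual curve of degree four, and check that the six rank-one points lie on both scrolls — has the right shape, and your plan for the last point is sound and in fact more explicit than what the paper records: every row (and column) of a rank-one matrix is proportional to every other, so the determinantal closure conditions defining $T_{\beta(s)}$ and $T_{\beta^{\vee}(s)}$ hold automatically at each $p_i$ (note, though, that an individual scroll is cut out on $Y$ by the $2\times 2$ minors of a $2\times 3$ matrix of linear forms, not by a single quadric $Q_1$ or $Q_2$ — a degree count of $3$ versus $6$ rules that out — so your ``$p_i\in Q_1\cap Q_2\cap Y$'' formulation needs to be replaced by exactly this minors check). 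The genuine gap is in the two numerical claims that carry the proposition. The degree-five count for $T_{\beta(s)}\cap T_{\beta^{\vee}(s)}$ is asserted rather than proved: the two scrolls are surfaces in $\bP^4$, so they do not meet properly there and B\'ezout does not apply, and on the nodal threefold $Y$ the product of their cycle classes does not by itself compute the degree of a one-dimensional intersection without an excess-intersection or linkage argument. Likewise ``has the numerics of a rational normal quartic'' is not an argument for genus zero — a degree-four curve on a quadric threefold can perfectly well be an elliptic quartic — and your fallback (an unexhibited explicit example, semicontinuity, then flatness for special $s$) leaves the statement unproved as written.

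The missing idea, and the paper's entire proof, is an adjunction/liaison computation that settles both points at once: by Proposition~\ref{prop:scroll}, $T_{\beta(s)}\cup T_{\beta^{\vee}(s)}=Y\cap Q$ is a complete intersection of a quadric and a cubic in $\bP^4$, hence has trivial dualizing sheaf (a singular K3 surface); adjunction along the component $T_{\beta(s)}$ then gives $K_{T_{\beta(s)}}+\ell_s+R_s\equiv 0$. Since a smooth cubic scroll is $\bF_1$ with hyperplane class $C_0+2f$ and $\ell_s$ is the exceptional section $C_0$, the double curve lies in $|2C_0+3f|$ (degree five) and $R_s\equiv C_0+3f$, of degree four and arithmetic genus zero — no B\'ezout count, generic example, or semicontinuity needed. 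I suggest replacing your degree and genus steps by this adjunction argument and keeping your rank-one verification that $R_s$ passes through $p_1,\ldots,p_6$, which the paper leaves implicit.
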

\begin{proof}
Generically, the cubic scrolls are nonsingular and isomorphic to 
$\bP^2$ blown up at one point, in which case $\ell_s \subset T_{\beta(s)}$
is the exceptional curve.  
Let $R_s$ denote the union of components of the intersection
other than $\ell_s$.  
We have shown that $T_{\beta{s}} \cup T_{\beta^{\vee}(s)}$ is a complete intersection
of a quadric and cubic in $\bP^4$, and thus is a singular K3 surface.  Adjunction
shows that
$$K_{T_{\beta(s)}}+ R_s+\ell_s\equiv 0;$$
hence $R_s$ has degree four and genus zero.  
\end{proof}

\section{Cubic threefolds with six double points are determinantal}
\label{sect:CTAD}

Here we complete C. Segre's determinantal construction of
cubic threefolds
with six double points:

\begin{prop} \label{prop:inverse}
Each cubic threefold with six ordinary double points
in linear general position is determinantal.  
\end{prop}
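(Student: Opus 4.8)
The plan is to construct a determinantal representation directly from the projection data assembled in Section~\ref{sect:cubic-3}. By Corollary~\ref{coro:unique}, the pair $(Y,p_6)$ is recovered from the genus-four stable curve $C_6 = E_6 \cup E_6^{\vee} \subset \bP^3$, which is a $(2,3)$-complete intersection on a smooth quadric $Q$ whose two components are twisted cubics meeting in the five nodes $n_1,\ldots,n_5$. Identifying $Q \simeq \bP(V) \times \bP(V^{\vee})$ with $V$ three-dimensional and $\bP^3 = \bP(\End(V))^{\vee}$-side appropriately, the twisted cubics $E_6, E_6^{\vee}$ should be read as the images of the two rulings under the Segre map into $\bP(\End(V))$; in other words, $C_6$ is the intersection of $\Sigma_2$ with a $\bP^3$, exactly as in Equation~\ref{eq:defS}. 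First I would make this identification precise: the linear system of quadrics through $Q$ gives the embedding, and the cubic surface $S = \Bl_{n_1,\ldots,n_5}Q$ of the Remark after Proposition~\ref{prop:project} is already determinantal by Proposition~\ref{prop:Grassmann}, say $S = \{\det M = 0\}$ with $M$ a $3\times3$ matrix of linear forms on a $\bP^3 = \bP(\Lambda)$.

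Next I would run the correspondence of Proposition~\ref{prop:transversal} in reverse: from the determinantal smooth cubic surface $S = \bP(\Lambda)\cap \Sigma_2$ form the orthogonal complement $\Lambda^{\perp} \subset \End(V)$ and set $Y' := \bP(\Lambda^{\perp}) \cap \Sigma_2$. Proposition~\ref{prop:transversal} guarantees $Y'$ is a determinantal cubic threefold with six ordinary double points in linear general position, and Proposition~\ref{prop:getlines} together with Corollary~\ref{coro:unique} lets me pin down $(Y',p_6')$ by its projection curve. The content of the proof is then to check that this projection curve is isomorphic to $C_6$ — equivalently, that the six nodes of $Y'$ project (from one of them) to a quadric with five marked points projectively equivalent to $(Q; n_1,\ldots,n_5)$. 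Since the cubic surface $S$ and the marked quadric determine each other (blow-down of the double-six, as recalled in the Remark), and since Proposition~\ref{prop:transversal}'s identification is $G$-equivariant hence independent of which node is used for projection, this comparison reduces to the statement that $S$ built from $(Q;n_i)$ and $S$ extracted from $Y'$ agree — which is forced once the determinantal data match.

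The main obstacle I anticipate is verifying the transversality hypotheses needed to invoke Proposition~\ref{prop:transversal}: I must show that for the $\Lambda$ coming from a \emph{generic} (hence, by specialization, every) smooth determinantal $S$, the orthogonal $\bP(\Lambda^{\perp})$ meets $\Sigma_1$ in six reduced points and is tangent to $\Sigma_2$ nowhere except transversally along $\Sigma_1$. Proposition~\ref{prop:linalg} converts this into a transversality statement about $\Lambda$ and $\Sigma_2$ on the surface side, which holds precisely because $S$ is smooth; so the work is to confirm that the open conditions defining "smooth $S$" and "six nodes in general position for $Y'$" correspond under $\Lambda \leftrightarrow \Lambda^{\perp}$, and that the six nodes of $Y'$ are actually in linear general position in $\bP(\End(V))$ — the last being the cohomology computation already flagged after Equation~\ref{eq:defY}. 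A secondary subtlety is well-definedness: $S$ admits $72$ determinantal representations (Proposition~\ref{prop:Grassmann}), but they form a single $G$-orbit, so the resulting $Y'$ is independent of the choice; and the $72$-fold ambiguity matches the choices of double-six on $S$, which is consistent with the symmetry already visible in $C_6$.

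Finally I would assemble the pieces: given an arbitrary cubic threefold $Y$ with six nodes in linear general position, Proposition~\ref{prop:project} produces $(Q; n_1,\ldots,n_5)$ hence a smooth cubic surface $S$; Proposition~\ref{prop:Grassmann} makes $S$ determinantal, yielding $\Lambda$; passing to $\Lambda^{\perp}$ and applying Proposition~\ref{prop:transversal} produces a determinantal $Y'$ with the same invariants; and Corollary~\ref{coro:unique}, applied after checking that $Y'$ projects to the same genus-four curve $C_6$, gives $Y' \simeq Y$. This exhibits $Y$ as determinantal, completing the argument and, together with Proposition~\ref{prop:transversal}, closing the loop with C.~Segre's Proposition~\ref{prop:Segre}.
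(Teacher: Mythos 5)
Your route — project from $p_6$ to get $(Q;n_1,\ldots,n_5)$ and $S=\Bl_{n_1,\ldots,n_5}Q$, make $S$ determinantal via Proposition~\ref{prop:Grassmann}, pass to $\Lambda^{\perp}$ and Proposition~\ref{prop:transversal} to get a determinantal $Y'$, then conclude with Corollary~\ref{coro:unique} — is genuinely different from the paper's, which compares $Y$ with the determinantal model through the Segre threefold $\fS$, the twisted quartics of Proposition~\ref{prop:twisted}, and the hexahedral identification of Proposition~\ref{prop:tosegre}. But your version has a real gap at precisely the step you call ``the content of the proof'': you never show that $Y'$, projected from a suitable node, has nodal curve isomorphic to $C_6$, and the justifications offered do not work. ``The cubic surface $S$ and the marked quadric determine each other'' is false: $S$ alone does not remember the blow-down $S\ra Q$ (a cubic surface is a blow-up of $\bP^1\times\bP^1$ in five points in many inequivalent ways), and that extra marking is exactly what must be matched. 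The appeal to $G$-equivariance of Proposition~\ref{prop:transversal} says nothing about which node of $Y'$ one projects from; independence of the chosen node is the unproven Remark after Proposition~\ref{prop:project}, which the paper explicitly defers. And the $72$ determinantal representations of $S$ are not a single $G$-orbit — by Proposition~\ref{prop:Grassmann} they correspond to the distinct sextuples of disjoint lines — and different double-sixes yield different hyperplane sections of $\fS$, hence a priori different threefolds with nodes at $p_1,\ldots,p_6$; so ``well-definedness'' cannot replace choosing the particular double-six dictated by the geometry of $C_6$. (By contrast, the transversality worry you flag is not the issue: Proposition~\ref{prop:transversal} as stated already gives the equivalence ``$S$ smooth $\Leftrightarrow$ six ordinary double points in linear general position''.)

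Concretely, what is missing is this comparison. By Lemma~\ref{lemm:identify}, Corollary~\ref{coro:linessing} and Proposition~\ref{prop:components}, the curve of lines of $Y'$ through its distinguished node is $E_6\sqcup E_6^{\vee}\subset S$ glued at the five pairs $E_6\cap E_j^{\vee}\sim E_6^{\vee}\cap E_j$, $j=1,\ldots,5$; on the other hand $C_6$ is $E_6\sqcup E_6^{\vee}$ — the proper transforms of the two twisted cubic components, which become two disjoint lines of $S$ — glued at the five pairs cut out by the exceptional curves of $S\ra Q$ over $n_1,\ldots,n_5$. To apply Corollary~\ref{coro:unique} you must exhibit projectivities of $E_6$ and of $E_6^{\vee}$ carrying the one five-point configuration to the other compatibly (a statement in the spirit of Lemma~\ref{lemm:glue}, but for the blow-down to the quadric rather than to $\bP^2$), and after choosing the double-six whose sixth pair consists of those two lines; nothing in your write-up does this, and the phrase ``forced once the determinantal data match'' is circular. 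A secondary error: identifying $C_6$ with ``$\Sigma_2$ intersected with a $\bP^3$ as in (\ref{eq:defS})'' is wrong — that intersection is the cubic \emph{surface} $S$, not the genus-four sextic curve, and $Q\simeq\bP^1\times\bP^1$, not $\bP(V)\times\bP(V^{\vee})$. The paper's detour through $\fS$ exists precisely to carry out this marking/gluing comparison; to salvage your projection-based argument you would need to prove the Remark after Proposition~\ref{prop:project} in a marked form, or verify the five-point matching directly.
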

We prove Proposition~\ref{prop:inverse}
using the geometry of the twisted
quartic curves in a determinantal cubic threefold,
following \cite[3.2-3.4]{CLSS}. 
One key tool is the {\em Segre threefold}
$\fS \subset \bP^4$;  we recall its basic properties:
\begin{itemize}
\item{Given $p_1,\ldots,p_6\in \bP^4$ in linear general position,
the linear series of cubics double at these points induces a morphism
$$\varpi:\Bl_{p_1,\ldots,p_6}\bP^4 \ra \bP^4$$
with image $\fS$ and fibers twisted quartic curves containing
the points $p_1,\ldots,p_6$.
If we choose $p_1=[1,0,0,0,0]$,$p_2=[0,1,0,0,0]$, \\
$p_3=[0,0,1,0,0]$,
$p_4=[0,0,0,1,0]$,$p_5=[0,0,0,0,1]$, and $p_6=[1,1,1,1,1]$ then
the cubics double at these points are
$$\begin{array}{c}
y_0=(x_3-x_4)x_0(x_1-x_2), \quad 
y_1=(x_4-x_0)x_1(x_2-x_3), \\
y_2=(x_0-x_1)x_2(x_3-x_4), \quad
y_3=(x_1-x_2)x_3(x_4-x_1), \\
y_4=(x_2-x_3)x_4(x_0-x_1)
\end{array}
$$
which satisfy
$$y_0y_1y_2+y_1y_2y_3+y_2y_3y_4+y_3y_4y_0+y_4y_0y_1=0.$$
}
\item{$\fS$ contains $10$ ordinary double points and $15$ planes.}
\item{The nonsingular twisted quartic curves map to an open subset
of $\fS$ that is isomorphic to $\cM_{0,6}$, the moduli
space of genus-zero curves with six marked points.
The morphism $\varpi$ is the universal family over $\cM_{0,6}$.}
\item{The inclusion $\cM_{0,6} \hookrightarrow \fS$ extends
to an isomorphism \cite{Hu}
$$(\bP^1)^6 /\!\!/ \mathrm{SL}_2 \stackrel{\sim}{\ra} \fS$$
from the GIT quotient of six points in $\bP^1$ with the symmetric
linearization.}
\end{itemize}
Thus we have a morphism 
\begin{equation}
\left\{ \begin{array}{c}
\text{cubic threefolds with } \\
\text{ordinary double points } \\
\text{at $p_1,\ldots,p_6$} 
\end{array} \right\} \ra
\left\{ \begin{array}{c}
\text{smooth cubic surfaces }\\
\text{arising as hyperplane }\\
\text{sections of $\fS\subset \bP^4$}
\end{array} \right\}
 \label{eqn:cubictosegre}
\end{equation}
\begin{rema}
In general,
M. Kapranov \cite{Kap2} \cite[4.3]{Kap} has shown that $\cM_{0,n-1}$
can be identified with the rational normal curves in $\bP^{n-3}$
passing through points $p_1,\ldots,p_{n-1} \in \bP^{n-3}$ in
linear general position.  The rational normal curves are the universal
curve,  with $p_1,\ldots,p_{n-1}$ tracing out the marked points.
Identifying the universal curve over $\ocM_{0,n-1}$ with $\ocM_{0,n}$,
there is a morphism
$$\begin{array}{rcl}
\ocM_{0,n} & \ra & \bP^{n-3} \\
     (C,p_1,\ldots,p_n) & \mapsto & p_n,
\end{array}
$$
factoring through $\Bl_{p_1,\ldots,p_{n-1}}\bP^{n-3}$.  
\end{rema}

Recall our previous notation:  Let
$$\{E_1,\ldots,E_6;E_1^{\vee},\ldots,E_6^{\vee} \}$$
denote the double-six on $S$,
$\beta:S \ra \bP(V)$ and $\beta^{\vee}:S \ra \bP(V^{\vee})$
the associated contractions, and 
$$\{q_1,\ldots,q_6; q_1^{\vee},\ldots, q_6^{\vee} \}$$
the images of the exceptional divisors.  
There is an involution of the Picard lattice taking 
$E_i$ to $E_i^{\vee}$ for $i=1,\ldots,6$.  
If the Picard lattice is presented
$$\bZ L +\bZ E_1 + \ldots + \bZ E_6, \quad E_i^2=-1,E_iE_j=\delta_{ij},
                                                L^2=1, LE_i=0,
$$
the involution takes the form
\begin{equation}
E_i \mapsto 2L-E_j-E_k-E_a-E_b-E_c=E^{\vee}_i, \label{eqn:inv}
\end{equation}
where $\{i,j,k,a,b,c\}$ is a permutation of the indices $\{1,\ldots, 6\}$.

We shall need a version of Cremona's hexahedral construction \cite{Cr,Dolg}:
\begin{prop} \label{prop:tosegre}
Let $S^{\circ}\subset S$ denote the complement to the lines in $S$.
For $s\in S^{\circ}$ consider the images of
$q_1,\ldots,q_6$ and $q_1^{\vee},\ldots, q_6^{\vee}$
under the projections
\begin{equation} \label{eqn:project}
\bP(V) \dashrightarrow \bP(V/\beta(s)) \quad
\bP(V^{\vee}) \dashrightarrow \bP(V^{\vee}/\beta^{\vee}(s)),
\end{equation}
which determine elements $j(s),j^{\vee}(s) \in \cM_{0,6}$.
Then we have the following:
\begin{itemize}
\item{$j(s)=j^{\vee}(s)$ for each $s\in S^{\circ}$;}
\item{there exists an extension 
$j:S\ra \fS$;}
\item{the image of $j$ is a hyperplane section of $\fS \subset \bP^4$;}
\item{conversely, each smooth hyperplane section $S\subset \fS$ 
is a cubic surface with a distinguished ordered double-six.}
\end{itemize}
Thus we obtain an identification
$$
\left\{
\begin{array}{c}
\text{smooth cubic surfaces} \\
\text{with a double-six of}  \\
\text{ordered lines}
\end{array}  
\right\} 
\simeq
\left\{
\begin{array}{c}
\text{smooth hyperplane }\\
\text{sections of $\fS\subset \bP^4$}
\end{array}
\right\}. 
$$
\end{prop}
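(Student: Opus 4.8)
The plan is to prove the four bullet points of Proposition~\ref{prop:tosegre} more or less in the order they are stated, exploiting the determinantal dictionary of Section~\ref{sect:deter} and the geometry of twisted quartics established in Proposition~\ref{prop:twisted}. First I would identify the geometric meaning of $j(s)$ and $j^\vee(s)$ in terms of the cubic threefold $Y$. Recall from Proposition~\ref{prop:scroll} that $\ell_s$ sits inside the scrolls $T_{\beta(s)}$ and $T_{\beta^\vee(s)}$, and from Proposition~\ref{prop:twisted} that $T_{\beta(s)}\cap T_{\beta^\vee(s)}=\ell_s\cup R_s$, where $R_s$ is a twisted quartic through $p_1,\dots,p_6$. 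By the basic properties of the Segre threefold $\fS$ recalled above, such a twisted quartic (with its six marked points $p_i$) determines a point of $\cM_{0,6}\subset\fS$, and this is the common value I will show equals both $j(s)$ and $j^\vee(s)$. Concretely, projecting $\bP(V)$ from $\beta(s)$ to $\bP(V/\beta(s))\simeq\bP^1$ sends $q_1,\dots,q_6$ to six points on a line, and the claim is that the cross-ratio data of these six points agrees with that obtained from $R_s\ni p_1,\dots,p_6$; the same holds on the dual side. The identification of projective coordinates here is exactly the content of Cremona's hexahedral construction, so the first bullet ($j(s)=j^\vee(s)$) reduces to matching these two descriptions of the same twisted quartic $R_s$.

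For the second bullet, the extension $j\colon S\to\fS$, the issue is that the rational maps in (\ref{eqn:project}) are only defined on $S^\circ$; one must check that $j$ extends regularly across the twelve lines $E_i,E_i^\vee$. I would argue this by continuity of the twisted quartic family: as $s$ approaches a point of $E_i$ (say), the scrolls $T_{\beta(s)},T_{\beta^\vee(s)}$ and their residual curve $R_s$ degenerate, but $R_s$ limits to a stable genus-zero curve with six marked points lying over $p_1,\dots,p_6$, i.e. to a boundary point of $\ocM_{0,6}$. Since $\cM_{0,6}\hookrightarrow\fS$ extends to an isomorphism $(\bP^1)^6/\!\!/\SL_2\xrightarrow{\sim}\fS$ and $S$ is a smooth (hence normal) projective surface, the rational map $j$ extends across the codimension-one locus; alternatively one can use properness of $\ocM_{0,6}\to\fS$ together with normality of $S$ and the valuative criterion. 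Here the double-six structure on $S$ provides precisely the ordering of the six points needed for a well-defined map to the ordered quotient.

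For the third bullet, that $j(S)$ is a hyperplane section of $\fS$: the pullback $j^*\cO_\fS(1)$ should be the cubic class $\mathcal{O}_S(3)$ appearing already in Proposition~\ref{prop:getlines} (the nets of cubics through $q_1,\dots,q_6$), so $j$ is given by a linear subsystem of $|{-K_S}|=|\mathcal{O}_S(1)|$ — wait, more precisely $j$ is given by the anticanonical embedding of $S$ composed with the linear projection realizing $S$ as a cubic surface, hence $j$ maps $S$ isomorphically onto its image, a cubic surface in $\bP^4$, and a cubic surface spanning $\bP^4$ inside the Segre cubic threefold $\fS\subset\bP^4$ must be cut out by a hyperplane (degree reasons: $\fS$ has degree three, so any hyperplane section is a cubic surface, and conversely a degree-three surface in $\fS$ spanning a hyperplane is such a section). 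I would make this precise by computing $\deg j(S)=3$ directly from $j^*\cO_\fS(1)^2$ and observing $j(S)$ spans a hyperplane. The converse bullet — each smooth hyperplane section $S\subset\fS$ carries a distinguished ordered double-six — comes from running the construction backwards: the fifteen planes of $\fS$ meet $S$ in lines, and the combinatorics of how the fifteen planes and ten nodes of $\fS$ meet a general hyperplane recovers a double-six together with its ordering (equivalently, via $\fS\simeq(\bP^1)^6/\!\!/\SL_2$, the six factors give the six marked points, hence the ordering). Assembling the four bullets gives the displayed bijection, where injectivity follows because $S$ with its ordered double-six is recovered from the hyperplane section together with the induced incidence data, and surjectivity is the third-plus-fourth bullet.

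The main obstacle I anticipate is the second bullet: verifying that $j$ extends regularly over the twelve exceptional lines and lands in $\fS$ rather than merely in $(\bP^1)^6/\!\!/\SL_2$ mapped by a rational map. This requires controlling the limiting behavior of the twisted quartics $R_s$ as the scrolls $T_{\beta(s)}$ degenerate — exactly the kind of degeneration analysis carried out in \cite[3.2--3.4]{CLSS}, which I would follow closely. Once that compactness/extension statement is in hand, the equality $j(s)=j^\vee(s)$ is a cross-ratio computation (Cremona's hexahedral identity), and the statements about the image being a hyperplane section and about the converse are straightforward degree and incidence bookkeeping on $\fS$.
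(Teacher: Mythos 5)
Your strategy differs from the paper's, and as written it has two genuine gaps precisely at the two substantive bullets. For the first bullet, you reduce $j(s)=j^{\vee}(s)$ to the claim that both projections compute the moduli point of the marked twisted quartic $(R_s;p_1,\ldots,p_6)$, and then wave at ``Cremona's hexahedral construction'' for the identification. That is circular here: this proposition \emph{is} the paper's version of the hexahedral construction, so the matching of the two descriptions is exactly what must be proved, and your proposal never proves it. (It is in fact provable by a short argument you do not give: $R_s\subset T_{\beta(s)}$ is a section of the ruling, and the ruling through $p_i$ corresponds to the unique functional vanishing on both $\beta(s)$ and $q_i=\mathrm{im}(p_i)$, i.e.\ to the image of $q_i$ under projection from $\beta(s)$; dually for $\beta^{\vee}$.) The paper instead avoids $R_s$ altogether: it blows up $s$ to get a degree-two Del Pezzo surface $\Bl_{q_1,\ldots,q_7}\bP(V)$ and observes that the two conic bundles $\varphi,\varphi^{\vee}$ are conjugate under the Geiser involution, hence have the same degenerate fibers, which gives $j(s)=j^{\vee}(s)$ immediately.

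For the second bullet your argument fails as stated: smoothness (or normality) of $S$ only guarantees that a rational map to a projective target is defined outside a \emph{finite} set of points; it does not ``extend across the codimension-one locus,'' and the valuative criterion for the proper $\ocM_{0,6}$ or $(\bP^1)^6/\!\!/\SL_2$ does not by itself produce a morphism at a point of a surface (different arcs through the point could have different limits, as for the inverse of a blow-up). You acknowledge this is the main obstacle but defer it to a degeneration analysis you do not carry out. The paper's extension is elementary and explicit: for $\beta(s)\notin\{q_1,\ldots,q_6\}$ at most two of the six projected points can collide (no three $q_i$ are collinear), so the $6$-tuple is GIT-semistable and defines a point of $\fS$; for $\beta(s)=q_i$ one identifies $E_i=\bP(T_{q_i}\bP(V))$ and sets $j(s)=(s,\fl_{ij}\cap E_i)_{j\neq i}$, which extends $j$ over $E_i$. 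Your handling of the last two bullets (degree computation for the hyperplane section; the fifteen planes of $\fS$ cutting out the fifteen lines, leaving a distinguished double-six) matches the paper and is fine, but without the two missing verifications above the proof is incomplete.
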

\begin{proof}
Fix $s\in S^{\circ}$
and consider the {\em degree-two} Del Pezzo surface
$S':=\mathrm{Bl}_s(S)=\mathrm{Bl}_{q_1,\ldots,q_7}(\bP(V))$.  
The projections (\ref{eqn:project}) induce conic bundles
$$\varphi:S' \ra \bP^1, \quad \varphi^{\vee}:S' \ra \bP^1$$
with degenerate fibers corresponding to the images of
$q_1,\ldots,q_6$ and $q_1^{\vee},\ldots,q_6^{\vee}$ respectively.
However, each degree-two Del Pezzo admits a canonical involution, i.e. 
the covering involution of the anticanonical morphism $S' \ra \bP^2$.
Moreover, $\varphi$ and $\varphi^{\vee}$ are conjugate under
this involution and thus have the same degenerate fibers.
We conclude that $j(s)=j^{\vee}(s)$ in $\cM_{0,6}$.  

We extend $j$ to $S$:  Assume first that $\beta(s)\neq q_1,\ldots,q_6$.
We still have a conic bundle $\varphi:S' \ra \bP^1$ but the
images of $q_i$ and $q_j$ in $\bP^1$ coincide if $\beta(s)\in \fl_{ij}$,
the line joining $q_i$ and $q_j$.  However, since no three of the
$q_i$ are collinear at most two points may coincide, so the image
of $(q_1,\ldots,q_6)$ is a GIT-semistable point of $(\bP^1)^6$;
this yields a well-defined point on $\fS$.  If $\beta(s)=q_1$
then we can identify $E_1=\bP(T_{q_1}\bP(V))=\bP(V/q_1)$ and
the images of the $q_j,j=2,\ldots,6$ in $\bP(V/q_1)$
with the intersections of the proper transforms of the $\fl_{1j}$
with $E_1$.  The rule
$$j(s)=(s,\fl_{12}\cap E_1,\ldots,\fl_{16}\cap E_1)$$
extends the definition of $j$ over $E_1 \subset S$.  
(This argument is very similar to the proof of Lemma~\ref{lemm:glue}.)  

An straightforward degree computation shows that
$j$ maps $S$ to a hyperplane section of $\fS$.  

For the final statement, the fifteen planes of $\fS$ cut out 
fifteen ordered lines of $S$.  The remaining lines form a double-six.  
\end{proof}

Let $Y'$ be a cubic threefold with ordinary double points at
$p_1,\ldots,p_6$; $\varpi$ induces a 
rational map $Y' \dashrightarrow S$ contracting the 
twisted quartic curves in $Y'$ to points of 
a smooth hyperplane section $\iota:S\hookrightarrow \fS$
with a distinguished double-six.  
After ordering the two sextuples of disjoint lines,
Proposition~\ref{prop:transversal} yields a determinantal cubic
hypersurface $Y$ with ordinary double points at $p_1,\ldots,p_6$
corresponding to the marked cubic surface $S$.  By
Proposition~\ref{prop:twisted}, the image of $Y$ under $\varpi$ is
a hyperplane section $\iota_2:S\hookrightarrow \fS$ with the
planes of $\fS$ tracing out the corresponding $15$ lines of $S$.  
Proposition~\ref{prop:tosegre} implies $\iota_1(S)=\iota_2(S)$
and thus $Y\simeq Y'$;  this yields an inverse to the morphism
(\ref{eqn:cubictosegre}).

\begin{rema}
The natural map 
$$
\left\{ \begin{array}{c}
\text{determinantal cubic threefolds} \\
\text{with ordinary double points } \\
\text{at $p_1,\ldots,p_6$} 
\end{array} \right\} \ra
\left\{ \begin{array}{c}
\text{cubic threefolds with} \\
\text{ordinary double points } \\
\text{at $p_1,\ldots,p_6$} 
\end{array} \right\}
$$
is not an isomorphism.  Under our identifications, these 
correspond to
$$
\left\{ \begin{array}{c}
\text{determinantal cubic surfaces} \\
\text{with a sextuple of ordered} \\
\text{lines}
\end{array} \right\} \ra
\left\{ \begin{array}{c}
\text{cubic surfaces with a} \\
\text{double-six of ordered}\\
\text{lines}
\end{array} \right\}
$$
which has degree two.  Indeed, this reflects the involution
(\ref{eqn:inv}) interchanging the sextuples of our double-six.  
\end{rema}

\section{Constructing flops}
\label{sect:example}
Let $X$ be a smooth cubic fourfold with hyperplane class $h$.  
\begin{prop} \label{prop:fourfold}
Assume that $X$ admits a hyperplane section $Y\subset X$
with six ordinary double points in linear general position.
Then $X$ contains two families of cubic scrolls $T$ and $T^{\vee}$, whose cycle classes satisfy
$$[T]+[T^{\vee}]=2h^2.$$
Conversely, if $X$ is a smooth cubic fourfold containing a smooth
cubic scroll $T$ then the 
hyperplane section
$$Y=X \cap \mathrm{span}(T)$$
has at least six double points, counted with multiplicities.  
\end{prop}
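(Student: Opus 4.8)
The plan is to prove each direction separately, using the determinantal dictionary assembled in Sections~\ref{sect:deter}--\ref{sect:CTAD} for the forward direction and a direct dimension/degree count for the converse. For the forward direction, assume $Y = X \cap H$ has six ordinary double points $p_1,\dots,p_6$ in linear general position. By Proposition~\ref{prop:inverse}, $Y$ is determinantal, so we may fix a presentation $Y = \bP(\Lambda^\perp) \cap \Sigma_2$ with $\Lambda \subset \End(V)$ and $\dim V = 3$. Proposition~\ref{prop:scroll} then produces, for each $[v] \in \bP(V)$, a cubic scroll $T_v \subset Y \subset X$, and dually for each $[v^\vee] \in \bP(V^\vee)$ a cubic scroll $T_{v^\vee} \subset X$. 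These are the two families $T$ and $T^\vee$. For the cycle class relation, I would again invoke Proposition~\ref{prop:scroll}: it asserts $T_v \cup T_{v^\vee} = Y \cap Q$ for a quadric hypersurface $Q$ (when $\beta(s) = [v]$ and $\beta^\vee(s) = [v^\vee]$ for a common $s \in S$). Inside $X$, the class of $Y \cap Q = X \cap H \cap Q$ is $h \cdot (2h) = 2h^2$ in $H^4(X,\bZ)$ (or $A_1(X)$), since $H$ is a hyperplane and $Q$ a quadric; hence $[T] + [T^\vee] = [T_v] + [T_{v^\vee}] = 2h^2$. One should check this is independent of the chosen $s$, which follows because all scrolls in the family $T$ are algebraically equivalent (they are parametrized by the irreducible $\bP(V)$), and likewise for $T^\vee$.

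For the converse, suppose $X$ contains a smooth cubic scroll $T$, and set $Y = X \cap \mathrm{span}(T)$ with $\mathrm{span}(T) \cong \bP^3$. First I would observe that $\mathrm{span}(T)$ is indeed a $\bP^3$: a smooth cubic scroll in $\bP^5$ spans a $\bP^4$... correction — a cubic scroll (the rational normal scroll of degree $3$) spans a $\bP^4$; but a cubic scroll inside a cubic fourfold is necessarily contained in a hyperplane section, and I would verify $T$ spans a $\bP^4$, so $Y = X \cap \mathrm{span}(T)$ is a cubic surface... This does not match the claimed statement, so the correct setup must be: the relevant scrolls here are the two-dimensional cubic scrolls of Proposition~\ref{prop:scroll}, which span a $\bP^4$; then $\mathrm{span}(T) \cong \bP^4$ and $Y = X \cap \mathrm{span}(T)$ is a cubic \emph{threefold}. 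I would then compute the singularities of $Y$ as follows: since $T \subset Y$ and $T$ is a cubic scroll (hence a non-normal... no, a smooth cubic scroll is $\bP^2$ blown up at a point, with a special line), the key point is that $Y$, a cubic threefold containing a cubic scroll $T$, must be singular. I would use the exact sequence / adjunction computation: $Y$ being a cubic threefold containing the degree-$3$ surface $T$ forces $Y$ to be highly singular because a smooth cubic threefold contains no cubic scrolls (its Hilbert scheme of surfaces is understood). More precisely, I would intersect $Y$ with the residual quadric: by a Bezout-type argument, $T$ imposes conditions forcing $\mathrm{Sing}(Y)$ to be nonempty, and then count: the expected number of double points of a cubic threefold that is ``determinantal-like'' is six.

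The cleanest route for the converse is to run the argument of Section~\ref{sect:CTAD} in reverse. Given the smooth cubic scroll $T$, the line $\ell \subset T$ (the exceptional curve of $T \cong \Bl_{pt}\bP^2$) and the ruling exhibit $T$ as playing the role of some $T_{\beta(s)}$; one reconstructs a candidate determinantal datum and reads off that $Y$ must acquire six double points $p_1,\dots,p_6$ (the six base points), counted with multiplicity, because the twisted quartic curve $R_s$ of Proposition~\ref{prop:twisted} passing through $p_1,\dots,p_6$ appears as a residual component of $T_{\beta(s)} \cap T_{\beta^\vee(s)}$. Concretely: $\mathrm{span}(T) = \bP^4$, the quadric $Q$ containing $T$ meets $Y$ in $T \cup T'$ with $T' = Y \cap Q$ residual, $T \cap T' = \ell \cup R$ with $R$ a twisted quartic by adjunction ($K_T + \ell + R \equiv 0$ as in Proposition~\ref{prop:twisted}), and the six points of $R$ lying on the singular locus of $Y \cap Q$ force six double points on $Y$. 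The main obstacle I anticipate is the converse direction: making the reconstruction of the six points rigorous without circularity (i.e.\ without already assuming $Y$ is determinantal), and correctly handling the ``counted with multiplicities'' clause — the six points may collide or be infinitely near when $T$ is special, and one must argue via upper-semicontinuity of the length of $\mathrm{Sing}(Y)$ or a direct local computation that the total multiplicity is at least six. Establishing the spanning dimension of $T$ precisely and identifying which ``cubic scroll'' is meant is a preliminary that must be pinned down first.
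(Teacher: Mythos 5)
Your forward direction is essentially the paper's: reduce to a determinantal presentation of $Y$, invoke Proposition~\ref{prop:scroll} for the two $\bP^2$-families of scrolls and the relation $T\cup T^{\vee}=Y\cap Q$, and read off $[T]+[T^{\vee}]=h\cdot 2h=2h^2$. That half is fine (your remark that the class is independent of the chosen member is automatic, since each family is parametrized by an irreducible $\bP^2$).

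The converse is where you have a real gap. The paper's argument is a single observation you never make: a smooth cubic scroll $T$ spanning a $\bP^4$ is cut out by the $2\times 2$ minors of a $2\times 3$ matrix of linear forms, and those three quadrics generate its homogeneous ideal; hence any cubic form vanishing on $T$ is $y_0Q_0+y_1Q_1+y_2Q_2$ with $y_i$ linear, i.e.\ it is the determinant of the $3\times 3$ matrix obtained by appending the row $(y_0,y_1,y_2)$. So $Y=X\cap\mathrm{span}(T)$ is determinantal with no circularity, and its double points are the points where the matrix drops to rank one (cf.\ Proposition~\ref{prop:linalg}); since the rank-one locus $\Sigma_1\simeq\bP^2\times\bP^2$ has degree six in $\bP(\End(V))\simeq\bP^8$, Bezout gives at least six singular points counted with multiplicity, which also disposes of the ``counted with multiplicities'' clause without any semicontinuity argument. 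Your two proposed substitutes do not work as stated: ``running Section~\ref{sect:CTAD} in reverse'' is circular, because that construction takes the six nodes in linear general position as its input, exactly what is to be proved; and the residual-quadric/adjunction route leans on Proposition~\ref{prop:twisted}, which was established under the determinantal (equivalently, six general nodes) hypothesis. Your wavering over $\mathrm{span}(T)$ resolves correctly ($T$ spans a $\bP^4$, so $Y$ is a cubic threefold), but the missing ideal-theoretic step above is the heart of the converse.
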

\begin{proof}
Assume that $X$ admits a hyperplane section $Y$ as above.  We may assume that
$Y$ is determinantal by 
Proposition~\ref{prop:Segre}.  Proposition~\ref{prop:scroll} guarantees that
$X$ contains two families of cubic scrolls, each parametrized by $\bP^2$.  
Given $T$ and $T^{\vee}$ from different families, we have
$$T \cup T^{\vee}=Y \cap Q$$
for some quadric hypersurface in $\bP^3$.  The equation on cycle classes
follows.

Now suppose that $X$ contains a smooth cubic scroll $T$ spanning the hyperplane
section $Y$.  In suitable coordinates, 
$$T=\{[x_0,\ldots,x_4]: \text{rank } 
\left( \begin{array}{ccc} x_0 & x_1 & x_2   \\
						   x_2 & x_3 & x_4 \end{array}
			\right) = 1 \}$$
and thus there exist linear forms $y_0,y_1,y_2$ in $x_0,\ldots,x_4$ such that
$$Y=\{[x_0,\ldots,x_4]: \mathrm{det }
\left( \begin{array}{ccc} x_0 & x_1 & x_2   \\
			   x_2 & x_3 & x_4 \\
			y_0 & y_1 & y_2  \end{array} \right)=0 \},$$
i.e., $Y$ is determinantal.  The double points correspond to the 
matrices of rank one (cf. Proposition~\ref{prop:linalg}).  
\end{proof}

\begin{theo} \label{theo:cubicscroll}
Let $X$ be a smooth cubic fourfold not containing a plane,
and $F$ its variety of lines.
Assume that $X$ admits a hyperplane section $Y$ with six ordinary
double points in linear general position.  Write
$$F(Y)=P \cup S' \cup P^{\vee}$$
with normalization $\bP^2 \sqcup S \sqcup \bP^2.$
Then there exist birational involutions
$$\iota,\iota^{\vee}:F \dashrightarrow F$$
which are regular away from $P\cup S'$ and $P^{\vee}\cup S'$ respectively.

Precisely, $\iota$ is factored as follows:
\begin{enumerate}
\item{Flop $P$ to get a new holomorphic symplectic fourfold
$F_1$;  the proper transform $S_1$ of $S$ is a plane in $F_1$.}
\item{Flop the $S_1$ in $F_1$ to get $F_2$, which is isomorphic
to $F$.}
\end{enumerate}
\end{theo}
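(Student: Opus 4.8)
The plan is to build the two involutions $\iota,\iota^\vee$ directly from the geometry of cubic scrolls on $X$, and then factor each through a pair of Mukai flops. I will describe $\iota$; the construction of $\iota^\vee$ is symmetric, interchanging $P$ and $P^\vee$.

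\medskip

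\textbf{Step 1: the involution on lines.}
By Proposition~\ref{prop:fourfold}, the hyperplane section $Y=X\cap\mathrm{span}(T)$ being determinantal produces two $\bP^2$-families of cubic scrolls in $X$; identify $P\cong\bP(V)$ and $P^\vee\cong\bP(V^\vee)$ with these parameter spaces via Proposition~\ref{prop:getlines}. For a \emph{general} line $\ell\in F$, I want to produce a second line $\iota(\ell)$. The recipe: a general $\ell$ meets some (finitely many) cubic scrolls $T_v$, $v\in P$; through $\ell$ there is a residual intersection construction. Concretely, I would use that $T_v\cup T_{v^\vee}=Y\cap Q$ is a (singular) K3 surface and that $T_v$ is $\bP^2$ blown up at a point (Proposition~\ref{prop:scroll}); a line on such a scroll sits in a pencil of conics, and the Mukai-flop mechanism attaches to $\ell$ a canonically determined residual line. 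The key point to verify is that this assignment $\ell\mapsto\iota(\ell)$ is an \emph{involution}, defined on the complement of the lines parametrized by $P\cup S'$, and that on that complement it is a regular morphism $F\setminus(P\cup S')\to F$. Biregularity of $\iota$ away from $P\cup S'$ follows because $F$ is smooth holomorphic symplectic and a birational self-map that is an isomorphism in codimension one off a closed subset, which this will be once we check it contracts nothing outside $P\cup S'$.

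\medskip

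\textbf{Step 2: factoring through flops.}
Now I factor $\iota$. First I flop the plane $P\subset F$: since $P$ is Lagrangian, $\cN_{P/F}\simeq\Omega^1_P$, so (as recalled in the introduction) the Mukai flop along $P$ produces a new irreducible holomorphic symplectic fourfold $F_1$ deformation-equivalent to $F$. I must then identify the proper transform $S_1$ of the singular surface $S'$ (equivalently of $S$) inside $F_1$ and show it is a \emph{plane} there, i.e.\ an embedded $\bP^2$ with normal bundle $\Omega^1_{\bP^2}$. This is where the geometry of $F(Y)=P\cup S'\cup P^\vee$ and the gluing data from Proposition~\ref{prop:components} enter: the curves $E_i\subset S$ glued to the lines $\ell_i\subset P^\vee$ (and to $\fl_{ij}$ inside $P$) govern exactly how $S'$ is blown up/down when $P$ is flopped, and a degree/lattice computation on $\Pic(S)$ (using the presentation $\bZ L+\sum\bZ E_i$ and the involution~(\ref{eqn:inv})) should show $S_1\cong\bP^2$. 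Then I flop $S_1$ to obtain $F_2$, and finally I identify $F_2$ with $F$: the composite birational map $F\dashrightarrow F_1\dashrightarrow F_2$ has the same indeterminacy and the same graph as $\iota$ built in Step~1, hence $F_2\simeq F$ and the composite is $\iota$. One clean way to close this is to observe that $\iota^2$ is birational, regular in codimension one, and fixes an ample class, hence is the identity; this simultaneously proves $\iota$ is an involution and that $F_2\cong F$.

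\medskip

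\textbf{Main obstacle.}
The routine parts are the Lagrangian/normal-bundle bookkeeping for the flops and the degree computations; the genuine difficulty is \textbf{Step 1 combined with the identification $S_1\cong\bP^2$} — that is, showing the residual-line construction really is a globally defined involution with indeterminacy exactly $P\cup S'$, and dually that after flopping $P$ the surface $S'$ straightens out to a plane rather than acquiring worse singularities or failing to contract to $\bP^2$. Both hinge on controlling the gluing of the three components of $F(Y)$ along the twelve curves $E_i,E_i^\vee$ and the fifteen points $\ell(p_i,p_j)$, for which the determinantal dictionary of Sections~\ref{sect:deter}--\ref{sect:CTAD} and especially Propositions~\ref{prop:components} and~\ref{prop:twisted} are the essential tools. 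I would spend the bulk of the argument there, and treat the two flops and the final identification $F_2\simeq F$ as comparatively formal consequences.
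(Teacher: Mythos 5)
Your outline has the right skeleton (define an involution off $F(Y)$, flop $P$, flop the proper transform of $S$, identify the result with $F$), but at each of the three points where the real work lies you substitute a placeholder for the argument. First, you never actually define $\iota$. The paper's construction is concrete: for $[m]\notin F(Y)$ the line $m$ meets $Y$ in a single smooth point $y$; by Corollary~\ref{coro:fibration} there is a unique line $\ell^{\vee}$ of the family $P^{\vee}$ through $y$; since $X$ contains no plane, the plane spanned by $m$ and $\ell^{\vee}$ cuts $X$ in $m\cup\ell^{\vee}\cup\bar m$, and $\iota(m):=\bar m$; the symmetry of this relation is what makes $\iota$ an involution. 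Your ``residual intersection construction'' via the scrolls $T_v$, in which ``the Mukai-flop mechanism attaches a canonically determined residual line,'' is circular (the flops are what you are trying to factor) and gives no handle on the indeterminacy locus. In particular, regularity along $P^{\vee}\setminus(P\cup S')$ is not automatic: the paper needs $\cN_{m/X}\simeq\cO^{\oplus 2}\oplus\cO(1)$ for lines of $P^{\vee}$ avoiding the nodes, the distinguished plane with $\Pi\cap X=2m\cup\bar m$, and an incidence correspondence to see that $\iota$ extends there; your appeal to ``isomorphism in codimension one off a closed subset'' does not produce this.

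Second, the statement that the proper transform $S_1$ is a plane cannot come from a lattice computation in $\Pic(S)$: one must determine what the Mukai flop of $P$ does to the non-normal surface $S'$ along $S'\cap P$, and the paper gets this from the local model for proper transforms of Lagrangian surfaces under Mukai flops (Leung): each branch of $S'$ is locally the conormal variety of a line $\ell\subset P$, whose proper transform is the conormal variety of the dual point $[\ell]$, so the flop contracts exactly the curves of $S$ corresponding to $S'\cap P$ and $S_1\simeq\bP^2$ meets $P_1$ transversely in six points. Third, the closing identification $F_2\simeq F$ is not formal, and neither of your suggestions delivers it: asserting that the two-flop composite has ``the same graph'' as $\iota$ is precisely what must be proved, while $\iota^2=\mathrm{id}$ (which already follows from the symmetric construction) says nothing about whether these particular flops resolve $\iota$. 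The paper proves it by showing $f=\phi^*\iota^*g$ is ample on $F_2$, using Boucksom's criterion together with the computation that the effective curve classes supported on $P\cup S'$ (resp.\ $P_2\cup S_2$) form a two-dimensional cone whose generators change sign under the identification of $H_2$, and then invoking that a birational map of smooth projective varieties taking an ample class to an ample class is an isomorphism. Without these three ingredients the proposal remains a plan rather than a proof.
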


\begin{proof}  
We construct $\iota$:  Let $[m] \in F$ be a line not contained in $F(Y)$.
Then $m \cap Y = \{y \}$, a nonsingular point of $Y$.  By Corollary~\ref{coro:fibration},
there exists a unique line $\ell^{\vee} \in P^{\vee}$ containing $y$.  Let $\Pi$ denote
the plane spanned by $\ell^{\vee}$ and $m$;  by assumption, $\Pi \not \subset X$.
Thus we have
\begin{equation} \label{eqn:barL}
\Pi \cap X= m \cup \ell^{\vee} \cup  \bar{m}
\end{equation}
for some line $\bar{m} \in F$.  Setting $\iota(m)=\bar{m}$, we get a morphism
$$\iota:F\setminus F(Y) \ra F.$$
Since (\ref{eqn:barL}) is symmetric in $m$ and $\bar{m}$, $\iota$ is
an involution.

As constructed, $\iota$ is not well-defined along $F(Y)$.  It remains to
show that it extends to
$m \in P^{\vee} \setminus (P\cup S)$.  Proposition~\ref{prop:components}
implies that $m$ does not contain any singularities of $Y$.  
The normal bundle to $m$ in $X$ is one of the following \cite[Proposition 6.19]{CG}
$$\cN_{m/X} \simeq \cO^{\oplus 2} \oplus \cO(1), \cO(-1) \oplus \cO(1)^{\oplus 2}.$$
Since $m$ does not contain any of the singularities of $Y$, 
Corollary ~\ref{coro:fibration} implies that we have the first case.
But then there exists a distinguished plane $\Pi$ with
$$\Pi \cap X= 2m \cup \bar{m},$$
i.e., $\Pi$ corresponds to the directions associated with the $\cO(1)$-summand.
Consider the correspondence
$$Z=\{(m,\ell^{\vee},\Pi): \Pi \cap X \supset m\cap \ell^{\vee}     \} 
\subset F \times P^{\vee}\times \bG(2,5);$$
the normal bundle computation guarantees that the projection
$$Z\ra F$$
is an isomorphism along $P^{\vee}\setminus (P\cup S)$.  
By definition, $\iota$ is regular on $Z$  
and thus at the generic point of $P^{\vee}$.  

We will use the following notation for our factorization
$$\begin{array}{rcccccccl}
     &         & F_{01} &          &    &         & F_{12} &         &              \\ 
     &\stackrel{\beta_{10}}{\swarrow} &        &\stackrel{\beta_{01}}{\searrow}  &
     &\stackrel{\beta_{21}}{\swarrow} &        &\stackrel{\beta_{12}}{\searrow}  & \\
F=F_0&         &        &          &F_1 &         &        &         & F_2 \\
     &\stackrel{\gamma_{10}}{\searrow} &        &\stackrel{\gamma_{01}}{\swarrow}  &
     &\stackrel{\gamma_{21}}{\searrow} &        &\stackrel{\gamma_{12}}{\swarrow}  & \\
     &         & \bar{F}_{01} &          &    &         & \bar{F}_{12} &         &             
\end{array}
$$
where $\beta_{10}$ blows up $P$, $\beta_{01}$
blows down the exceptional divisor of $\beta_{10}$, $\beta_{21}$
blows up $S_1$ (the proper transform of $S$), and $\beta_{12}$
blows down the exceptional divisor of $\beta_{12}$.  Here 
$\bar{F}_{01}$ and $\bar{F}_{12}$ denote the singular varieties
obtained by contracting $P$ and $S_1$ to a point.  In other words,
$F_2$ is obtained from $F_0$ by two Mukai flops.  Moreover, we will show that
$\iota:F \dashrightarrow F$ is resolved on passage to $F_2$, so the induced
$\iota_2:F \ra F$ is necessarily an isomorphism.

Let $P_1\subset F_1$ denote the plane that results from flopping $P$. 
\begin{lemm}
$S_1$ is isomorphic to $\bP^2$ and meets $P_1$ transversely at six points.  
\end{lemm}
\begin{proof}
Proposition~\ref{prop:components} describes how $S'$ and $P$ intersect:  $S'$
has two smooth branches meeting transversely in $F$, each of which meets
$P\simeq \bP^2$ in a line.  If $S^{\circ}$ is the smooth locus of $S'$ then
$$P\cap S^{\circ} \subset S^{\circ}$$
is Cartier, hence $\beta_{10}^{-1}(S^{\circ})\simeq S^{\circ}$.  However, $S'$
fails to be Cohen-Macaulay at the points of $S'\setminus S^{\circ}$, so any
Cartier divisor through these points would necessarily have an embedded point.
In particular,
$$P\cap S' \subset S'$$
is not Cartier at singular points of $S'$
and $\beta_{10}$ necessarily modifies $S'$ at these points.  

We claim that
the proper transform $S_{01}$ of $S'$ in $F_{01}$ is just $S$.
The easiest way to see this is through a local computation.  At each singularity of
$S'$ choose local coordinates $\{x_1,x_2,x_3,x_4\}$ such that
$$S'=\{x_1=x_2=0 \} \cup \{x_3=x_4=0 \} \quad P=\{x_2=x_3=0 \}.$$
The blow-up of $P$ has homogeneous equation 
$$Ax_3=Bx_2$$
and thus the proper transforms of the components of $S'$ are disjoint and
mapped isomorphically onto their images.

We next show that $\beta_{01}$ contracts the double-six in $S$ corresponding
to the intersection of $P$ with $S'$.  The key ingredient is the local description
of proper-transforms of Lagrangian submanifolds under Mukai flops given in \cite[\S 4.2]{Leung}:
Locally, a holomorphic-symplectic fourfold containing a plane looks like the 
total space of the cotangent-bundle of $\bP^2$.  In the cotangent bundle, a complex Lagrangian 
submanifold is modelled locally as the conormal sheaf $\cN^*_V$ of a complex submanifold $V\subset \bP^2$.
The Mukai flop is realized as the cotangent bundle of the dual plane $\cbP^2$;  
the proper transform of the Lagrangian submanifold looks locally like the conormal sheaf
$\cN^*_{\check{V}}$ of the projective dual $\check{V} \subset \cbP^2$.  Since each
branch of $S'$ looks locally like $\cN^*_{\ell}$ for a line $\ell \subset \bP^2$, 
its proper transform looks locally like $\cN^*_{[\ell]}$, where $[\ell] \in \cbP^2$
classifies $\ell$.  In particular, the proper transform $S_1$ meets $P_1$
in six points and $\beta_{01}:S_{01} \ra S_1$ blows down the double-six (corresponding to 
$S' \cap P$) to these points.  

The analysis in Proposition~\ref{prop:components} implies $S_1 \simeq \bP^2$, and thus
is a Lagrangian plane in $F_1$.  
\end{proof}

We define $F_2$ as the Mukai flop of this plane and let $S_2$
denote the resulting plane,
$P_2$ the proper transform of $P_1$, and
$$\phi:F_2 \dashrightarrow F$$
the composition of Mukai flops.  Now $P_2$ is isomorphic to a cubic surface,
meeting $S_2$ along a double-six as described before.

\

It remains to show that $\iota$
is resolved on $F_2$.  Let $g$ be the polarization on $F$ and the induced
divisor class;  it is globally generated except along $S \cup P$.  We shall
use the following result of Boucksom \cite{Boucksom}:  Let $Z$ be an irreducible holomorphic
symplectic variety and $f$ a divisor class on $Z$;  then $f$ is ample if and only
if $f$ meets each rational curve on $Z$ positively.  In particular, $\iota^*g$ 
necessarily meets some rational curves supported in $S\cup P$ negatively.  Consider the
map on one-cycles modulo homological equivalence induced by normalization and inclusion
$$j_*:N_1(\tilde{S},\bZ) \oplus N_1(P,\bZ) \ra N_1(F,\bZ).$$
This has a nontrivial kernel:  The description in Proposition~\ref{prop:components}
implies that 
$$j_*(E_i,0)=j_*(0,[\text{line}]), \quad i=1,\ldots,6$$
hence $j_*$ has rank two, and the image of the effective curve classes is spanned by 
$\lambda_1=j_*(0,[\text{line}])$ and $\lambda_2=j_*(\beta_{01}^*[\text{line}],0)$.
In particular, all the effective curve classes where $\iota^*g$ is negative
can be expressed as linear combinations of $\lambda_1$ and $\lambda_2$ with nonnegative
coefficients.  

We now analyze the pull-back $f=\phi^*\iota^*g$.  Since $\phi$ is an isomorphism away
from $P_2 \cup S_2$, any curve of $F_2$ not contained in $P_2 \cup S_2$ meets $f$
positively.  Just as before, the image of the effective curve classes in $P_2 \cup S_2$
are a cone generated by two classes $\lambda_1'$ and $\lambda_2'$.  However, using the
identifications
$$H_2(F_2,\bQ) \stackrel{\sim}{\lra} H^2(F_2,\bQ) \stackrel{\iota^*}{\lra} H^2(F,\bQ) 
\stackrel{\sim}{\lra} H_2(F,\bQ)$$
we see that $\lambda_1'=-\lambda_1$ and $\lambda_2'=-\lambda_2$.  It follows that $f$
is positive along {\em all} curves classes in $F_2$ and thus is ample.  

To reiterate, the composition
$$\phi \circ \iota:F \dashrightarrow F_2$$
is a birational map of smooth projective varieties, and takes the ample divisor
$g$ to an ample divisor $f$.  It follows that $\phi$ is an isomorphism.
\end{proof}

\begin{rema}
The strategy of our argument is due to Burns-Hu-Luo \cite{BHL}, who prove that 
any birational morphism of irreducible holomorphic symplectic varieties with
{\em normal} exceptional loci is a sequence of Mukai flops.  The normality
assumption can be eliminated (cf. \cite[1.2]{WW}).  
\end{rema}

\begin{prop} \label{prop:fixeddivisor}
Retain the notation of Theorem~\ref{theo:cubicscroll}.
Let $[v] \in \bP(V)$ and 
$T^{\vee}:=T_{v} \subset Y$ denote one of the cubic scrolls described
in Proposition~\ref{prop:scroll}.  The divisor
$$\tau^{\vee}=\{[m] \in F: m \cap T^{\vee} \neq \emptyset \} \subset F$$
is invariant under $\iota$.
\end{prop}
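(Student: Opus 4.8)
The plan is to show that the divisor $\tau^\vee$ is $\iota$-invariant by unwinding the definition of $\iota$ in terms of residual lines in planes through $X$. Recall $\iota$ is built as follows: for $[m]\in F\setminus F(Y)$, let $y=m\cap Y$, let $\ell^\vee\in P^\vee$ be the unique line of $P^\vee$ through $y$, let $\Pi=\mathrm{span}(\ell^\vee,m)$, and write $\Pi\cap X=m\cup\ell^\vee\cup\bar m$ with $\iota(m)=\bar m$. Thus $m$, $\iota(m)$, and some line $\ell^\vee\in P^\vee$ are coplanar and all meet at the common point $y\in Y$. The key observation is that the lines $\ell^\vee\in P^\vee$ are precisely the fibers $\ell_{[v^\vee]}$ of the rulings of the cubic scrolls $T_v$ described in Proposition~\ref{prop:scroll}: $P^\vee=\nu(\bP(V))$ parametrizes the lines $\ell_{[v]}=\{y=[\phi]:v\in\mathrm{im}(\phi)\}$... more precisely, I need to identify which family of lines $P$ versus $P^\vee$ sweeps out each scroll $T^\vee=T_v$. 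By Proposition~\ref{prop:scroll}, the scroll $T_v$ is ruled by the lines $\ell_{[v^\vee]}$ with $v^\vee(v)=0$, and these lines belong to the component of $F(Y)$ dominated by $\bP(V^\vee)$, i.e. to $P^\vee$ (after checking the conventions line up). So every line of $P^\vee$ lying on $T^\vee$ is a ruling line, and $T^\vee$ is a union of such lines.

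First I would establish the following concrete description of $\tau^\vee$: a line $[m]\in F$ meets $T^\vee$ if and only if $m$ passes through a point of $T^\vee$. Since $T^\vee\subset Y\subset X$ and a generic $m\in F$ meets $Y$ in exactly one point $y=m\cap Y$, for $[m]\notin F(Y)$ we have $[m]\in\tau^\vee$ if and only if $y\in T^\vee$. Next I would show that $\iota$ preserves the point $y$: by construction $m$ and $\bar m=\iota(m)$ both pass through $y$ (they meet $\ell^\vee$ at $y$, which lies on $Y$ because $\ell^\vee\subset Y$). Hence $\bar m\cap Y\ni y$, and since $\bar m\notin F(Y)$ generically, $\bar m\cap Y=\{y\}$ as well. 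Therefore $[m]\in\tau^\vee\iff y\in T^\vee\iff[\bar m]\in\tau^\vee$, so $\iota(\tau^\vee)=\tau^\vee$ on the open dense set where $\iota$ is defined, and by taking closures $\iota(\tau^\vee)=\tau^\vee$.

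The main obstacle I anticipate is the first reduction: verifying that $\ell^\vee$ genuinely lies on $Y$ (hence the intersection point $y$ of $m$ and $\ell^\vee$ is forced to be $m\cap Y$, not merely a point of $X$), and more subtly, confirming that the unique line $\ell^\vee\in P^\vee$ through a nonsingular point $y$ is a ruling of $T^\vee=T_v$ exactly when $y\in T^\vee$. This is where Corollary~\ref{coro:fibration} and Proposition~\ref{prop:scroll} must be combined carefully: through a nonsingular $y\in Y$ there is a unique line in the $\bP(V^\vee)$-component, and that line is a ruling fiber of $T_v$ for the unique $[v]\in\bP(V/?)$... one must check that $y\in T_v$ is equivalent to the $P^\vee$-line through $y$ being one of the ruling lines of $T_v$, i.e. that $v\in\mathrm{im}(y)$ is detected by the ruling. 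Granting this, the point $y=m\cap Y=\bar m\cap Y$ is manifestly $\iota$-invariant, and $\tau^\vee$ being the locus $\{[m]:m\cap Y\in T^\vee\}$ is automatically preserved. I would also remark that $\tau^\vee$ is genuinely a divisor (codimension one in $F$) since $T^\vee$ is a surface in the threefold $Y$, so the incidence condition cuts out a divisor in the fourfold $F$; this is the kind of routine dimension count I would not belabor.
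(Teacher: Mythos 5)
There is a genuine gap in your argument: it hinges on the claim that $m$ and $\bar m=\iota(m)$ both pass through the point $y=m\cap Y$, i.e.\ that the three lines $m,\ell^{\vee},\bar m$ of $\Pi\cap X$ are concurrent at $y$. They are only coplanar. Since $\Pi\not\subset\mathrm{span}(Y)$ and $\ell^{\vee}\subset Y$, we have $\Pi\cap\mathrm{span}(Y)=\ell^{\vee}$, so $\bar m\cap Y$ is the single point $\bar y=\bar m\cap\ell^{\vee}$, which generically differs from $y=m\cap\ell^{\vee}$: the plane cubic $\Pi\cap X$ is a triangle of lines, not three concurrent lines. Hence $\iota$ does not preserve the incidence point with $Y$, and the equivalence $[m]\in\tau^{\vee}\iff y\in T^{\vee}$ cannot be transported to $\bar m$ through the same point $y$. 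A sanity check that your claim proves too much: if $y$ were $\iota$-invariant, the identical argument would show that the incidence divisor of a scroll from the \emph{other} family (a $T_{v^{\vee}}$, ruled by $P$-lines, of class $\tau$) is also $\iota$-invariant; but by Proposition~\ref{prop:nef2} the involution acts on $J_{12}$ by the reflection $R_3$, which fixes only the span of $2g-\tau$, so that class is not fixed.

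The correct route --- which you nearly have in hand in your ``main obstacle'' paragraph --- is to use invariance of the whole line $\ell^{\vee}$ rather than of the point $y$. If $m$ meets $T^{\vee}=T_v$, then $y\in T_v$ means $v\in\mathrm{im}(y)$; the unique $P^{\vee}$-line through the nonsingular point $y$ (Corollary~\ref{coro:fibration}) is $\ell_{[v^{\vee}]}$ with $v^{\vee}(\mathrm{im}(y))=0$, hence $v^{\vee}(v)=0$, so by Proposition~\ref{prop:scroll} this line is a ruling of $T_v$ and in particular $\ell^{\vee}=\ell_{[v^{\vee}]}\subset T^{\vee}$. Since $\bar m$ lies in the plane $\Pi$, it meets $\ell^{\vee}$, hence meets $T^{\vee}$, so $[\bar m]\in\tau^{\vee}$. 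This is exactly the paper's proof; no claim that $\iota$ fixes $y$ is needed (nor is it true).
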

\begin{proof}
By Proposition~\ref{prop:scroll}, the ruling of $T^{\vee}$ is the
rational curve
$$\lambda_1^{\vee}=\{[v^{\vee}]:v^{\vee}(v)=0 \}
 \subset P^{\vee} \subset F(Y) \subset F.$$
If $m$ is incident to $T^{\vee}$ then $m$ meets some ruling 
$\ell_{[v^{\vee}]} \subset T^{\vee}$ where $[v^{\vee}] \in \lambda_1^{\vee}$.
Thus $\ell_{[v^{\vee}]}$ coincides with the line $\ell^{\vee}$
used in the construction of $\iota$.  Since the lines
$\{m,\ell_{[v^{\vee}]},\iota(m) \}$ are coplanar, we have
$\iota(m) \cap \ell_{[v^{\vee}]} \neq \emptyset$ and 
$\iota(m) \in \tau^{\vee}$.  
\end{proof}

\section{Cones of moving and ample divisors}
\label{sect:appl-cones}
Let $X$ be a smooth cubic fourfold and $F$ its variety of lines.  
The incidence correspondence
$$\begin{array}{ccccc}
  &   & Z & &  \\
  & \stackrel{\pi}{\swarrow} & & \stackrel{\psi}{\searrow} & \\
  X                    &   & &                       & F
\end{array}
$$
induces the {\em Abel Jacobi map} of integral Hodge structures \cite{BeauDonagi}
$$\alpha=\psi_*\phi^*:H^4(X) \ra H^2(F).$$
This is compatible with quadratic forms:  Writing $g=\alpha(h^2)$ we have
$$\left(g,g \right)=2\left<h^2,h^2\right>=6$$
and
$$\left(\alpha(z_1),\alpha(z_2)\right)=-\left<z_1,z_2\right> \text{ for all }
z_1,z_2 \in (h^2)^{\perp}.
$$
For general cubic fourfolds we have
$$H^4(X,\bZ) \cap H^{2,2}(X,\bC) = \bZ h^2,$$
but special cubic fourfolds admit additional algebraic cycles
\cite{Has00}:
\begin{prop} 
Let $\cC$ denote the moduli space of smooth cubic fourfolds
and $\cC_d \subset \cC$ the cubic fourfolds $X$ 
admitting a rank-two saturated lattice of
Hodge cycles
$$h^2 \in K_d  \subset H^4(X,\bZ) \cap H^{2,2}(X,\bC)$$
of discriminant $d$.  Then $\cC_d$ is nonempty if and only if
$$d \equiv 0,2\pmod{6}, \quad d>6.$$ 
In this case, $\cC_d$ is an irreducible divisor in $\cC$.  
\end{prop}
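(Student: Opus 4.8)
The plan is to reduce the statement to a question about a specific rank-two lattice and its Hodge-theoretic embeddability, following the strategy of \cite{Has00}. First I would recall that a cubic fourfold $X$ lies in $\cC_d$ precisely when its (primitive) Hodge structure $H^4_{\mathrm{prim}}(X,\bZ)$, equipped with the intersection form, admits a primitive embedding of a rank-two positive-definite lattice $K_d$ containing $h^2$ with $\langle h^2,h^2\rangle=3$ and discriminant $d$. Since all such $K_d$ are isometric for fixed $d$ (there is essentially one rank-two lattice with the prescribed invariants, once one fixes the class $h^2$ and the discriminant), the locus $\cC_d$ is nonempty if and only if the orthogonal complement $K_d^{\perp}$ inside the cubic fourfold lattice $L:=H^4_{\mathrm{prim}}(X,\bZ)$ can be realized as the transcendental lattice of some weight-two Hodge structure of the correct type, and this is governed by a surjectivity-of-periods / Torelli statement for cubic fourfolds \cite{Voisin}.

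Second, I would carry out the lattice-theoretic computation. The cubic fourfold lattice $L$ is the well-known lattice of signature $(2,20)$ (concretely $L\cong U^{\oplus 2}\oplus E_8^{\oplus 2}\oplus A_2$ up to the primitive-cohomology normalization), and I would use Nikulin's theory of discriminant forms to decide when a rank-two lattice of signature $(1,1)$ with discriminant $-d$ embeds primitively into $L$. This is where the numerical condition enters: the congruence $d\equiv 0,2 \pmod 6$ comes from the requirement that the discriminant form of $K_d^{\perp}$ be compatible with that of $L$ (whose discriminant group is $\bZ/3$), and the inequality $d>6$ rules out the degenerate case $d=6$ (which would force $K_d$ to contain $h^2$ non-primitively, i.e. a class with self-intersection making $X$ singular or reducible, equivalently $K_6$ does not occur for a smooth cubic) as well as excluding negative or too-small discriminants where the signature $(1,1)$ cannot be achieved. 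I would also need to check that whenever the abstract embedding exists, the period point can be chosen so that $K_d$ is exactly the Hodge lattice and not accidentally larger — a very general point of the corresponding period subdomain does this.

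Third, for irreducibility and divisoriality: the embedding $K_d\hookrightarrow L$ is unique up to the action of the monodromy group $\Gamma$ of cubic fourfolds (this is the key input — one must show $\Gamma$ acts transitively on primitive embeddings of $K_d$, again via Nikulin/Eichler-type arguments on the orthogonal group of $L$), so $\cC_d$ is the image of a single period subdomain $\cD_{K_d}$ modulo its stabilizer, hence irreducible; and $\cD_{K_d}$ has codimension one in the full period domain $\cD$ because imposing one extra $(2,2)$-class cuts the Hodge filtration by exactly one condition, so $\cC_d$ is a divisor. I would invoke the Torelli theorem for cubic fourfolds to pass from the period domain back to $\cC$.

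The hard part will be the monodromy-transitivity statement — proving that $\Gamma$ acts transitively on the set of primitive sublattices of $L$ isometric to $K_d$ — since this is what makes $\cC_d$ irreducible rather than a union of several divisors; handling the discriminant-form bookkeeping for $L$ (whose discriminant group has order $3$) across all residues of $d$ modulo $6$, and in particular verifying that the sporadic small values are genuinely excluded, is the delicate combinatorial core of the argument. The surjectivity/Torelli inputs I would simply cite.
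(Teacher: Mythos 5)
The paper does not prove this proposition; it is quoted from \cite{Has00}, so the comparison here is with the argument given there. The skeleton you propose does overlap with that argument in several places, and those parts are sound: the congruence $d\equiv 0,2\pmod 6$ does come out of lattice/discriminant-form bookkeeping (mod $3$ from the $\bZ/3$ discriminant group of the primitive cohomology lattice, mod $2$ from the fact that $h^2$ is characteristic in the odd unimodular lattice $H^4(X,\bZ)$, equivalently from evenness of the primitive lattice -- not solely from the $\bZ/3$ as you say); the pair $(K_d,h^2)$ is unique up to isometry; irreducibility is proved via transitivity of the monodromy group on such sublattices; and the divisor statement is the expected codimension-one count in the period domain, transported to $\cC$ by Torelli. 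Minor but real slips: $K_d$ is positive definite of rank two (the algebraic part of $H^{2,2}$ carries a positive form), not of signature $(1,1)$, and the primitive cohomology has signature $(20,2)$ for the intersection form.

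The genuine gap is your reliance on ``surjectivity of periods'' for cubic fourfolds, which is not a theorem you can cite: Voisin's result is injectivity of the period map, and local Torelli only makes the image open. In fact the image is a \emph{proper} open subset, and this is exactly where the content of the proposition lies. The lattices $K_2$ and $K_6$ satisfy your congruence and embed primitively into $H^4(X,\bZ)$ with $h^2\in K_d$ (note $h^2$ is automatically primitive since $\left<h^2,h^2\right>=3$, so your stated reason for excluding $d=6$ is wrong), hence the corresponding period divisors are nonempty; nevertheless $\cC_2=\cC_6=\emptyset$. So your scheme, as written, would ``prove'' nonemptiness for $d=2,6$ as well, which shows it cannot by itself establish either the exclusion $d>6$ or the existence of $\cC_d$ for admissible $d>6$: knowing a divisor in the period domain is nonempty says nothing about whether it meets the (unknown, at that point) image of the period map. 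In \cite{Has00} both issues are handled with geometric input: the $d=2$ and $d=6$ loci are identified with limits of determinantal and nodal cubics and excluded for smooth $X$, and existence for admissible $d>6$ is obtained by explicitly constructing special cubic fourfolds (working in auxiliary families such as cubics containing distinguished surfaces, where one can exploit surjectivity of the period map for the associated K3 surfaces). Alternatively one could today invoke the Laza--Looijenga determination of the image of the cubic-fourfold period map, but some input of this kind is indispensable, and your sketch is missing it.
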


Assume that $X$ contains a smooth cubic scroll $T$ and
let $Y$ denote the hyperplane section of $X$ containing $T$.  
The intersection form $\left<,\right>$ on the middle cohomology of $X$
restricts to
$$K_{12}:=\begin{array}{c|cc}
  & h^2 & T \\
\hline
h^2 & 3 & 3 \\
T & 3 & 7 
\end{array}
$$
a lattice of discriminant $12$.  
Moreover, $\cC_{12}$ is the closure of the locus of cubic
fourfolds containing a cubic scroll or a hyperplane section
with six double points in general position.

Let $\tau=\alpha(T)$ so that the Beauville-Bogomolov form restricts to
$$J_{12}:=\begin{array}{r|cc}
  & g & \tau \\
\hline
g    & 6 & 6 \\
\tau & 6 & 2 
\end{array}
$$
a lattice of discriminant $-24$.  
We summarize elementary properties of this lattice:
\begin{enumerate}
\item{The elements of $J_{12} \otimes_{\bZ}\bR$ with nonnegative Beauville
form are a union of convex cones $\cP \cup -\cP$ where
$$\cP=\Cone\left(g-(3-\sqrt{6}\tau,(3+\sqrt{6})\tau -g\right).$$  }
\item{$J_{12}$ does not represent $-2$ or $0$.}
\item{The automorphism group of $J_{12}$ 
is isomorphic to the direct product of $\left< \pm 1\right>$
and the infinite dihedral group
$$\Gamma:=\left<R_1,R_2:R_1^2=R_2^2=1\right>$$
where the reflections $R_1,R_2$ can be written
$$ \begin{array}{rclcrcl}
R_1(g)&=&g & & R_1(\tau)&=&2g-\tau \\
R_2(g)&=&-g+6\tau & &	R_2(\tau)&=&\tau.
\end{array}$$
$\Gamma$ consists of the automorphisms taking $\cP$ to itself.}
\item{$J_{12}$ represents $-10$.  We list $(-10)$-classes
with positive intersection with $g$:
$$
\begin{array}{rclcrcl}
\multicolumn{3}{c}{ \text{class} \quad \quad} & & \multicolumn{3}{c}{\text{intersection with }g} \\
\hline 
&\vdots& & & &\vdots & \\
\rho^{\vee}_3&=&16\tau-3g & & \left(\rho^{\vee}_3,g\right)&=&78 \\
\rho^{\vee}_2&=&4\tau-g & &\left( \rho^{\vee}_2,g\right)&=&18 \\
\rho^{\vee}_1&=&2\tau-g & &\left( \rho^{\vee}_1,g\right)&=&6 \\
\rho_1&=&3g-2\tau & &\left( \rho_1,g\right)&=&6 \\
\rho_2&=&7g-4\tau & &\left( \rho_2,g\right)&=&18 \\
\rho_3&=&29g-16\tau & & \left( \rho_3,g\right)&=&78 \\
&\vdots & & & & \vdots &
\end{array}$$
For $j\ge 3$ we define recursively 
$$
\rho_j=(R_1R_2)\rho_{j-2} \,\, \text{ and }\,\, \rho_j^{\vee}=(R_2R_1)\rho_{j-2}^{\vee}.
$$  
}
\item{The element $R_1R_2$ has infinite order and acts
on the $(-10)$-classes with orbits:
$$
\begin{array}{c}
\{\ldots, \rho^{\vee}_3,\rho^{\vee}_1,-\rho_2,\ldots \} \quad
\{\ldots, -\rho^{\vee}_3,-\rho^{\vee}_1,\rho_2,\ldots \} \\
 \{\ldots,-\rho_2^{\vee},\rho_1,\rho_3\ldots \} \quad
 \{\ldots,\rho_2^{\vee},-\rho_1,-\rho_3\ldots \}.
\end{array}
$$
The element $R_1$ has order two and acts via
$$R_1(\rho_i)=\rho_i^{\vee}$$
for each $i$.}  
\end{enumerate}

\begin{prop} \label{prop:nef1}
Assume that $X$ is a smooth cubic fourfold containing a smooth cubic
scroll $T$ with
$$H^4(X,\bZ) \cap H^{2,2}(X,\bC)=\bZ h^2 + \bZ T$$
or equivalently
$$H^2(F,\bZ) \cap H^{1,1}(F,\bC)=\bZ g + \bZ \tau.$$
The nef cone of $F$ equals  
the cone dual to $\Cone(\rho_1,\rho_1^{\vee})$, i.e., 
$\Cone(\alpha_1,\alpha_1^{\vee})$ where
$\alpha_1=7g-3\tau$ and $\alpha^{\vee}_1=g+3\tau$.
\end{prop}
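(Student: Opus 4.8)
The plan is to pinch $\mathrm{Nef}(F)$ between $\Cone(\alpha_1,\alpha_1^{\vee})$ from below and from above, using that $\Pic(F)_{\bR}=J_{12}\otimes\bR$ is only two-dimensional. The first step is the purely lattice-theoretic identity
$$\{\,h\in J_{12}\otimes\bR : (h,\rho)\ge 0 \text{ for every }(-10)\text{-class }\rho\text{ with }(g,\rho)>0\,\}=\Cone(\alpha_1,\alpha_1^{\vee}).$$
By property (2), $J_{12}$ represents neither $0$ nor $-2$, so the left-hand side is exactly the ``candidate nef cone'' described in the introduction. A direct computation gives $(\alpha_1,\rho_1)=(\alpha_1^{\vee},\rho_1^{\vee})=0$ while $(\alpha_1,\rho_1^{\vee})>0$ and $(\alpha_1^{\vee},\rho_1)>0$, so $\Cone(\alpha_1,\alpha_1^{\vee})$ is precisely the cone dual, for the Beauville--Bogomolov form, to $\Cone(\rho_1,\rho_1^{\vee})$. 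To see that no other $(-10)$-class contributes a genuinely new inequality, I would invoke the structure recorded in (4)--(5): $R_1R_2$ is hyperbolic, its two fixed rays are the null rays bounding $\cP$, and the recursions $\rho_j=(R_1R_2)\rho_{j-2}$, $\rho_j^{\vee}=(R_2R_1)\rho_{j-2}^{\vee}$ show that the rays $\bR_{>0}\rho_j$ (resp. $\bR_{>0}\rho_j^{\vee}$) move monotonically toward one (resp. the other) null ray as $j$ grows; the increasing values $(\rho_j,g)=6,18,78,\dots$ pin down the direction. Hence $\bR_{>0}\rho_1$ and $\bR_{>0}\rho_1^{\vee}$ are the extreme rays among $(-10)$-classes pairing positively with $g$, so $\Cone(\{\rho_j\}_j\cup\{\rho_j^{\vee}\}_j)=\Cone(\rho_1,\rho_1^{\vee})$ and the displayed identity holds.

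The inclusion $\Cone(\alpha_1,\alpha_1^{\vee})\subseteq\mathrm{Nef}(F)$ is then immediate from \cite{HT07}, since $\alpha_1$ and $\alpha_1^{\vee}$ (which are linearly independent) lie in the set just described. It therefore suffices to prove that $\alpha_1$ and $\alpha_1^{\vee}$ are \emph{not} ample: $\mathrm{Nef}(F)$ is a full-dimensional closed convex cone in the plane $\Pic(F)_{\bR}$ containing the two-dimensional cone $\Cone(\alpha_1,\alpha_1^{\vee})$, so once we know neither generator is interior to $\mathrm{Nef}(F)$, the two cones must coincide.

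For the non-ampleness I would use the geometry assembled earlier. By Theorem~\ref{theo:cubicscroll}, $F$ contains the two Lagrangian planes $P,P^{\vee}\subset F(Y)$, and there is a birational contraction $F\to\bar F_{01}$ collapsing $P$ and, by the symmetric factorization of $\iota^{\vee}$, a contraction of $F$ collapsing $P^{\vee}$; a line $\ell\subset P$, resp. $\ell^{\vee}\subset P^{\vee}$ — for instance a ruling $\lambda_1^{\vee}$ of one of the cubic scrolls of Proposition~\ref{prop:scroll}, which lies in $P^{\vee}$ — is contracted, so the pull-back to $F$ of an ample class on the target is nef but pairs to zero with the contracted curve. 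To identify the corresponding rays with $\alpha_1,\alpha_1^{\vee}$ one computes the Beauville--Bogomolov dual of $[\ell^{\vee}]$: from $\nu^{*}\cO_F(1)=\cO_{\bP^2}(3)$ (Proposition~\ref{prop:getlines}) one gets $g\cdot\ell^{\vee}=3$, and one further intersection number — readily extracted from Proposition~\ref{prop:fixeddivisor} together with $[T]+[T^{\vee}]=2h^2$ — determines $[\ell^{\vee}]^{\flat}$ in the basis $g,\tau$. Since $J_{12}$ represents neither $-2$ nor $0$, the primitive class on that ray is forced to be a $(-10)$-class, and being extremal in $\oNE(F)$ it must be $\rho_1$ or $\rho_1^{\vee}$ (the two computations giving $\rho_1$ and $\rho_1^{\vee}$ respectively). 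As $(\alpha_1,\rho_1)=(\alpha_1^{\vee},\rho_1^{\vee})=0$, each of $\alpha_1,\alpha_1^{\vee}$ annihilates an effective curve class and hence is not ample; combined with the previous paragraph this gives $\mathrm{Nef}(F)=\Cone(\alpha_1,\alpha_1^{\vee})$, dual to $\Cone(\rho_1,\rho_1^{\vee})$.

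I expect the main obstacle to be the matching in the last step: translating the geometrically defined contracted curve classes on $P$ and $P^{\vee}$ into the correct Beauville--Bogomolov coordinates, i.e. identifying them, up to positive scalar, with the arithmetically defined $(-10)$-classes $\rho_1,\rho_1^{\vee}$ of $J_{12}$. Everything else is either a finite check inside the lattice $J_{12}$ or a direct appeal to \cite{HT07} and Theorem~\ref{theo:cubicscroll}.
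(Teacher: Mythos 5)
Your overall strategy is the same as the paper's: the inclusion $\Cone(\alpha_1,\alpha_1^{\vee})\subseteq$ nef cone comes from the main theorem of \cite{HT07} together with the lattice facts that $J_{12}$ represents neither $0$ nor $-2$ and that $\rho_1,\rho_1^{\vee}$ cut out the extreme inequalities among $(-10)$-classes, and the reverse inclusion comes from the Lagrangian planes $P,P^{\vee}$ and their degree-three lines, whose classes are (proportional to) the duals of $\rho_1$ and $\rho_1^{\vee}$. Two points in your write-up need repair, however. First, you invoke Theorem~\ref{theo:cubicscroll} (and the existence of $P,P^{\vee}$ coming from Section~\ref{sect:apply}) directly for the given $X$, but the hypothesis of the proposition is only the rank-two condition on $H^{2,2}\cap H^4(X,\bZ)$; Proposition~\ref{prop:fourfold} then only gives that $Y=X\cap\mathrm{span}(T)$ has at least six double points counted with multiplicity, not that they are six \emph{ordinary} nodes in \emph{linear general position}. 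The paper closes this with its one-line reduction ``it suffices to prove this for a generic cubic fourfold containing a cubic scroll'' (ampleness of $\alpha_1,\alpha_1^{\vee}$ is an open condition in the family over $\cC_{12}$, and the generic member has the required nodal hyperplane section), and some such reduction must appear in your argument; as written it only covers the generic case.

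Second, in the identification step: the claim that the needed second intersection number is ``readily extracted from Proposition~\ref{prop:fixeddivisor}'' is misplaced (that proposition only gives $\iota$-invariance of $\tau^{\vee}$), and the auxiliary inference ``since $J_{12}$ represents neither $-2$ nor $0$, the primitive class on that ray is forced to be a $(-10)$-class'' is not valid -- a primitive class on an arbitrary ray can have any square, so non-representability of $-2$ and $0$ by itself pins down nothing. What does work, and is what the paper's terse assertion rests on, is the projection formula for the Abel--Jacobi correspondence: if $\lambda$ is a curve in $F$ whose lines sweep out a surface $\Sigma\subset X$, then $g\cdot\lambda=\left<h^2,\Sigma\right>$ and $\tau\cdot\lambda=\left<T,\Sigma\right>$. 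The line $\lambda_1\subset P$ sweeps out a scroll of one family and $\lambda_1^{\vee}\subset P^{\vee}$ one of the other, giving intersection vectors $(3,7)$ and $(3,-1)$ (using $\left<T,T\right>=7$, $\left<T,2h^2-T\right>=-1$), whence the dual classes are $\tfrac12\rho_1$ and $\tfrac12\rho_1^{\vee}$ respectively; this is exactly the matching you flagged as the main obstacle, and with it your non-ampleness argument (each of $\alpha_1,\alpha_1^{\vee}$ kills a distinct effective curve class) goes through as in the paper.
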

\begin{proof}
The main theorem of \cite{HT07} asserts that a divisor class $f$ on $F$
is nef if $\left(f,\rho\right) \ge 0$ for each divisor
class $\rho$ on $F$ satisfying $\left(g,\rho \right)>0$ and either
$\left(\rho,\rho\right) \ge -2$ or
$\left(\rho,\rho\right)  -10$ and  $\left(\rho,H^2(F,\bZ)\right)=2\bZ$.  
Thus the nef cone contains $\Cone(\alpha_1,\alpha_1^{\vee})$.

It remains to show that $\alpha_1$ and $\alpha_1^{\vee}$ are
at the boundary of the nef cone.  We show they induce nontrivial
contractions of $F$.
It suffices to prove this for a generic cubic fourfold
containing a cubic scroll, so we may assume $Y$ has
exactly six ordinary double points in linear general position.  It follows
that there exist Lagrangian planes 
$$P,P^{\vee} \subset Y\subset F$$ 
such that lines
$\lambda_1\subset P$ and $\lambda^{\vee}_1 \subset P^{\vee}$ 
both have degree three.
The classes of these lines are dual to 
$\rho_1$ and $\rho_1^{\vee}$ respectively.
The divisors $\alpha_1$ and $\alpha_1^{\vee}$ induce small contractions 
$$\gamma_{10}:F=F_0\ra \bar{F}_{01}, \quad
\gamma^{\vee}_{01}:F_0 \ra \bar{F}_{10}^{\vee}$$
of $P$ and $P^{\vee}$ respectively.  
\end{proof}

We may flop the plane $P$ (resp. $P^{\vee}$) in $F$
to obtain new holomorphic symplectic fourfold $F_1$ (resp. $F_1^{\vee}$).
The birational maps between $F,F_1$, and $F_1^{\vee}$ induce
identifications of their Picard groups.  
The first step in analyzing their nef cones is
to enumerate the orbit of $\alpha_1$ under $\Gamma$:
$$
\begin{array}{lclcrcl}
\multicolumn{3}{c}{ \text{class} \quad \quad} & & \multicolumn{3}{c}{\text{intersection with }g} \\
\hline 
&\vdots& & & &\vdots & \\
\alpha^{\vee}_2=R_2R_1(\alpha_1)&=&9\tau-g & &\left( \alpha^{\vee}_2,g\right)&=&48 \\
\alpha^{\vee}_1=R_1(\alpha_1)&=&g+3\tau & &\left( \alpha^{\vee}_1,g\right)&=&24 \\
\alpha_1&=&7g-3\tau & &\left( \alpha_1,g\right)&=&24 \\
\alpha_2=R_1R_2(\alpha^{\vee}_1)&=&17g-9\tau & &\left( \alpha_2,g\right)&=&48 \\
&\vdots & & & & \vdots &
\end{array}$$
For $j \ge 3$ we define recursively
$$\alpha_j=R_1R_2(\alpha_{j-2})  \,\, \text{ and }\,\, 
\alpha^{\vee}_{j}=R_2R_1(\alpha^{\vee}_{j-2}).$$
Observe that $\left(\alpha_i^{\vee},\rho_i^{\vee}\right)=\left(\alpha_i,\rho_i\right)=0$ for each $i\ge 1$.  

\begin{prop} \label{prop:nef2}
The nef cone of $F_1$ (resp. $F^{\vee}_1$) equals $\Cone(\alpha_1,\alpha_2)$
(resp. $\Cone(\alpha_2^{\vee},\alpha_1^{\vee})$.)  
\end{prop}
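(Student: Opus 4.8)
The plan is to transport the computation of the nef cone of $F$ from Proposition~\ref{prop:nef1} across the Mukai flop, using the fact that a flop along a Lagrangian plane induces an isomorphism on $H^2$ which reflects the nef chamber across the appropriate wall. Concretely, flopping $P\subset F$ produces $F_1$ together with an identification $H^2(F,\bZ)\simeq H^2(F_1,\bZ)$; under this identification the wall of the nef cone of $F$ spanned by $\alpha_1$ (the ray on which $\gamma_{10}$ contracts $P$) becomes a shared wall of the nef cones of $F$ and $F_1$, while the divisor class that was effective-contracting, namely $\alpha_1^{\vee}$ (or rather the curve class $\rho_1^{\vee}$ it kills), changes sign. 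So the nef cone of $F_1$ is the chamber adjacent to $\Cone(\alpha_1,\alpha_1^{\vee})$ across the wall $\bR_{\ge 0}\alpha_1$; the reflection across that wall is precisely $R_1R_2$ applied in the orientation that fixes $\alpha_1$, carrying $\alpha_1^{\vee}$ to $\alpha_2=R_1R_2(\alpha_1^{\vee})=17g-9\tau$. This gives $\mathrm{Nef}(F_1)=\Cone(\alpha_1,\alpha_2)$.

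The steps I would carry out are as follows. First, recall from Proposition~\ref{prop:nef1} and its proof that $\alpha_1=7g-3\tau$ lies on the wall of $\mathrm{Nef}(F)$ along which the small contraction $\gamma_{10}\colon F\to \bar F_{01}$ of $P$ is induced, and that $\rho_1=3g-2\tau$ is dual to the ruling line $\lambda_1\subset P$, with $(\alpha_1,\rho_1)=0$. Second, invoke the structure of the Mukai flop $\beta_{10},\beta_{01}$ along $P$ (as set up in Section~\ref{sect:example}): the flop identifies $\mathrm{Pic}(F)$ with $\mathrm{Pic}(F_1)$, and a divisor class is nef on $F_1$ exactly when it is nef on $\bar F_{01}$ near the exceptional locus and pairs non-negatively with the flopped curve class, i.e.\ with $-\rho_1$ in the curve group $N_1(F,\bZ)$. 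Hence $\mathrm{Nef}(F_1)$ is the closure of the chamber sharing the wall $\bR_{\ge 0}\alpha_1$ with $\mathrm{Nef}(F)$ but lying on the side where $\rho_1$ pairs negatively. Third, identify this adjacent chamber explicitly: by item (3) of the lattice discussion the walls of the $\Gamma$-translates of the nef cone tile $\cP$, and the reflection fixing $\alpha_1$ and moving into the $\rho_1$-negative half-space sends $\alpha_1^{\vee}\mapsto \alpha_2$. Thus $\mathrm{Nef}(F_1)=\Cone(\alpha_1,\alpha_2)$. Finally, to verify $\alpha_2$ is genuinely on the boundary (not merely an upper bound), produce the contracting curve: the flopped plane $P_1\subset F_1$ carries a line whose class is dual to $\rho_3^{\vee}$ or, more directly, $S_1$ becomes a plane in $F_1$ (Theorem~\ref{theo:cubicscroll}), and $\alpha_2$ induces its small contraction, with $(\alpha_2,\cdot)$ vanishing on the class dual to it. The case of $F_1^{\vee}$ is identical after applying $R_1$, which interchanges the whole picture with its $\vee$-version.

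The main obstacle I anticipate is pinning down the sign convention and orientation in the wall-crossing: showing precisely that flopping $P$ reflects the nef cone across the ray $\bR_{\ge 0}\alpha_1$ in the direction that turns $\rho_1$ negative, rather than some other wall. This requires knowing that $\alpha_1$ alone spans the relevant wall (i.e.\ that $P$ is the \emph{only} locus contracted on that ray), which follows because for generic $X$ in $\cC_{12}$ the only extremal curve classes are the rulings $\lambda_1,\lambda_1^{\vee}$ together with the $S'$-classes — and the $S'$-contraction sits on the \emph{other} wall $\bR_{\ge 0}\alpha_1^{\vee}$. I would also need that the flop preserves the Beauville–Bogomolov form on $H^2$ (true since $F_1$ is deformation equivalent to $F$ and the flop acts by a Hodge isometry, cf.\ \cite{Hu97}), so that the combinatorics of $J_{12}$ transports verbatim. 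Once these points are in place, the identification $\mathrm{Nef}(F_1)=\Cone(\alpha_1,\alpha_2)$ is forced by the tiling of $\cP$ by chambers described in item (3), and the symmetric statement for $F_1^{\vee}$ follows by applying $R_1$.
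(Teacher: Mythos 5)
Your route is genuinely different from the paper's, and as written it has gaps that the paper's argument is specifically designed to avoid. The paper's proof is a short symmetry argument: by the factorization in Theorem~\ref{theo:cubicscroll}, the birational involution $\iota$ becomes a \emph{regular} automorphism of $F_1$; by Propositions~\ref{prop:fourfold} and \ref{prop:fixeddivisor} it fixes $\tau^{\vee}=\alpha(2h^2-T)=2g-\tau$, hence acts on $J_{12}$ by the reflection $R_3$ with $R_3(g)=11g-6\tau$, $R_3(\tau)=20g-11\tau$. Since a regular automorphism preserves the nef cone and $\alpha_1$ is one wall (it induces the contraction of the flopped plane), the other wall is forced to be $R_3(\alpha_1)=17g-9\tau=\alpha_2$. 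You never invoke $\iota$ or Proposition~\ref{prop:fixeddivisor}, and your substitutes do not close the argument.

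Concretely: (i) the claim that a class is nef on $F_1$ exactly when it is non-negative on the flopped curve (and ``nef near the exceptional locus'') is not true --- nefness requires non-negativity against \emph{all} curves of $F_1$, in particular against lines on the new Lagrangian plane $S_1$, and locating that second constraint is the entire content of the proposition; (ii) the appeal to ``the tiling of $\cP$ by chambers'' is circular, since the statement that the nef cones of the models are exactly the chambers cut out by the $(-10)$-classes is the conclusion of Theorem~\ref{theo:main}, while item (3) of the lattice discussion only computes the automorphism group of $J_{12}$; (iii) there is no isometry of $J_{12}$ fixing $\alpha_1$ and sending $\alpha_1^{\vee}$ to $\alpha_2$: the orthogonal reflection in $\alpha_1$ is not integral (it sends $g$ to $\tfrac{1}{5}(23g-12\tau)$), and $R_1R_2$ is not a reflection and carries $\Cone(\alpha_1^{\vee},\alpha_1)$ to $\Cone(\alpha_2,\alpha_3)$, two chambers over (it corresponds to the isomorphism $F\simeq F_2$, i.e.\ two flops); the isometry stabilizing $\Cone(\alpha_1,\alpha_2)$ is $R_3$, which swaps $\alpha_1$ and $\alpha_2$, and without the regularity of $\iota$ on $F_1$ you have no reason the nef cone of $F_1$ is preserved by it; (iv) the extremal class orthogonal to $\alpha_2$ is $\rho_2=7g-4\tau$ (indeed $\left(\alpha_2,\rho_2\right)=0$), not $\rho_3^{\vee}$, and you neither compute the class of a line on $S_1$ nor address the reverse inclusion $\Cone(\alpha_1,\alpha_2)\subseteq \mathrm{Nef}(F_1)$, i.e.\ that no further wall cuts into this cone. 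A repair along your lines would require computing the class of the line on $S_1$ under the flop and then applying the nefness criterion of \cite{HT07} on $F_1$ with a polarization in the interior; that is a substantially longer argument than the paper's use of the regular involution.
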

\begin{figure}
\begin{center}
\includegraphics{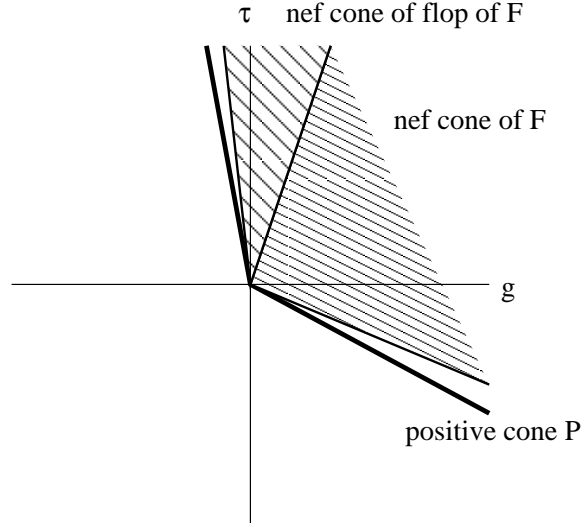}
\end{center}
\caption{The nef cones of $F$ and $F_1^{\vee}$}
\end{figure}
\begin{proof}
Since $F_1$ is the flop of $F$ along $P$, it is clear that $\alpha_1$
(which induces the contraction of $P$)
is one generators of the nef cone.  The factorization
given in Theorem~\ref{theo:cubicscroll}
shows that $\iota$ induces a {\em regular} involution
on $F_1$.  Proposition~\ref{prop:fixeddivisor} implies that $\iota$
fixes the divisor
$$\tau^{\vee}=\alpha(T^{\vee})=\alpha(2h^2-T)=2g-\tau,$$
where the middle equality uses Proposition~\ref{prop:fourfold}.  
Consequently, $\iota$ acts on $J_{12}$
via the reflection $R_3$ through the line orthogonal to $2g-\tau$
$$R_3(g) = 11g-6\tau, \quad R_3(\tau)=20g-11\tau.$$
The second generator of the nef cone of $F_1$ is thus
$$R_3(\alpha_1)=17g-9\tau=\alpha_2.$$
\end{proof}

Theorem~\ref{theo:cubicscroll} gives 
isomorphisms between every second model, so Proposition~\ref{prop:nef1}
and \ref{prop:nef2} suffice to describe the nef cone of
{\em every} model of $F$.    We summarize our whole discussion:
\begin{theo} \label{theo:main}
Suppose that $X$ is a smooth cubic fourfold containing a smooth cubic
scroll $T$ with
$$H^4(X,\bZ) \cap H^{2,2}(X,\bC)=\bZ h^2 + \bZ T$$
and let $F=F_0$ denote the variety of lines on $X$.  
Then we have an infinite sequence of Mukai flops
$$\cdots F^{\vee}_2  \dashrightarrow F^{\vee}_1 \dashrightarrow F_0 \dashrightarrow
F_1 \dashrightarrow F_2 \cdots$$
with isomorphisms between every other flop in this sequence
$$\cdots F^{\vee}_{2} \stackrel{\sim}{\ra} F_0 \stackrel{\sim}{\ra} F_2  \cdots \quad \text{ and } \quad
\cdots F^{\vee}_{1} \stackrel{\sim}{\ra} F_1  \cdots$$

The positive cone of $F$ can be expressed as the union of the nef cones
of the models $\{ \cdots, F^{\vee}_1,F_0,F_1, \cdots \}$:
$$\cdots, \Cone(\alpha^{\vee}_2,\alpha^{\vee}_1), \ \Cone(\alpha^{\vee}_1,\alpha_1),\ \Cone(\alpha_1,\alpha_2),
\cdots$$
The isomorphisms induce an action of
$\bZ\simeq \left<R_1R_2\right> \subset \Gamma$ on the Picard group
of $F$.  
\end{theo}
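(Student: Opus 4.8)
The plan is to assemble Theorem~\ref{theo:main} directly from the ingredients already in place, since all the hard work is done in Theorem~\ref{theo:cubicscroll} and Propositions~\ref{prop:nef1} and~\ref{prop:nef2}. First I would fix the birational model $F_0=F$ and produce the two-sided infinite chain of flops inductively: starting from $F_0$, Proposition~\ref{prop:nef1} identifies the two boundary rays $\bR_{\ge0}\alpha_1$ and $\bR_{\ge0}\alpha_1^{\vee}$ of the nef cone, each realized (for a generic such cubic fourfold, hence for all by deformation/specialization of the lattice-theoretic statement) by a small contraction of a Lagrangian plane $P$, resp. $P^{\vee}$, whose line class is dual to $\rho_1$, resp. $\rho_1^{\vee}$. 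Flopping these planes yields $F_1$ and $F_1^{\vee}$, and Proposition~\ref{prop:nef2} computes their nef cones as $\Cone(\alpha_1,\alpha_2)$ and $\Cone(\alpha_2^{\vee},\alpha_1^{\vee})$. Then I would invoke Theorem~\ref{theo:cubicscroll}: the involution $\iota$ (resp. $\iota^{\vee}$) is realized by the composite of the flop $F_0\dashrightarrow F_1$ followed by a second Mukai flop $F_1\dashrightarrow F_2$, and $\iota_2\colon F\to F_2$ is an isomorphism. Transporting the whole picture along this isomorphism reproduces the same local configuration at $F_2$ that we had at $F_0$, and iterating (and doing the symmetric thing with $\iota^{\vee}$ on the left) produces the bi-infinite sequence
$$\cdots F_2^{\vee}\dashrightarrow F_1^{\vee}\dashrightarrow F_0\dashrightarrow F_1\dashrightarrow F_2\cdots$$
together with the isomorphisms $F_{2k}\simeq F_0$ and $F_{2k+1}\simeq F_1$ for all $k\in\bZ$.

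Next I would verify that the nef cones of these successive models tile the positive cone $\cP$. Each nef cone is one of the chambers $\Cone(\alpha_j,\alpha_{j+1})$ or $\Cone(\alpha_{j+1}^{\vee},\alpha_j^{\vee})$ appearing in the $\Gamma$-orbit list for $\alpha_1$; adjacent models share a wall (the ray through the flopped curve class, dual to the relevant $\rho_j$ or $\rho_j^{\vee}$), and consecutive walls are related by the reflections $R_1,R_2$ generating $\Gamma$. Since $\Gamma$ is the infinite dihedral group and $R_1R_2$ has infinite order acting on $\cP$ with no fixed ray in its interior, the $\Gamma$-translates of the single chamber $\Cone(\alpha_1^{\vee},\alpha_1)$ exhaust $\cP$ with disjoint interiors — this is the standard fact that a Coxeter group acts on its Tits cone with fundamental chamber the given cone. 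Matching this combinatorial tiling against the geometric sequence of nef cones gives the displayed decomposition $\cdots,\Cone(\alpha_2^{\vee},\alpha_1^{\vee}),\Cone(\alpha_1^{\vee},\alpha_1),\Cone(\alpha_1,\alpha_2),\cdots$ of $\cP$.

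Finally, for the last assertion I would read off the group action on $\Pic(F)\otimes\bR$. The isomorphism $\iota_2\colon F\xrightarrow{\sim}F_2$ composed with the flop identifications $\Pic(F)=\Pic(F_1)=\Pic(F_2)$ gives an auto-equivalence of $(\Pic(F),(,))$ that, by the computation in the proof of Proposition~\ref{prop:nef2} (where $\iota$ acts by the reflection $R_3$ fixing $2g-\tau$), composed appropriately with the flop transition, realizes the element $R_1R_2\in\Gamma$; symmetrically $\iota^{\vee}$ realizes $R_2R_1=(R_1R_2)^{-1}$. Since $R_1R_2$ has infinite order, the subgroup it generates is $\bZ$, and this is exactly the group of "shift by two models" acting on the chain, hence on $\Pic(F)$. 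I expect the main obstacle — or at least the only point requiring genuine care rather than bookkeeping — to be the bootstrap step: one must check that after applying $\iota_2$ the model $F_2$ really sits in the same relative position to its neighbors as $F_0$ did (so that the induction closes and the flopped planes on the $F_2$ side are again the ones dual to the appropriate $(-10)$-classes), and that the nef cone of $F_2$ is the $\Gamma$-translate one expects rather than something degenerate; this uses that $J_{12}$ represents neither $0$ nor $-2$ (property (2) of the lattice), so that no $(0)$- or $(-2)$-wall ever intervenes and the wall-and-chamber structure is governed purely by the $(-10)$-classes and the dihedral group $\Gamma$.
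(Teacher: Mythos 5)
Your overall strategy is the paper's own: Theorem~\ref{theo:main} is proved there simply by assembling Theorem~\ref{theo:cubicscroll} with Propositions~\ref{prop:nef1} and~\ref{prop:nef2} and iterating via the isomorphism $F\simeq F_2$, which is exactly your bootstrap. But two of your specific justifications are wrong. First, the lattice element realized by the isomorphism $F\xrightarrow{\sim}F_2$ is \emph{not} $R_1R_2$. Under the proper-transform identification $\Pic(F)=\Pic(F_1)=\Pic(F_2)$ (which is the identity on classes), the automorphism induced by this isomorphism is precisely $\iota^*$, and by Proposition~\ref{prop:fixeddivisor} and the proof of Proposition~\ref{prop:nef2} this is the reflection $R_3=R_1R_2R_1$ fixing $2g-\tau$ --- an element of order two; no composition with a flop transition that acts trivially on $\Pic$ can convert it into the infinite-order translation $R_1R_2$. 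Likewise $\iota^{\vee}$ acts by the reflection $R_2$ fixing $\tau$, not by $R_2R_1$. The infinite-order element the geometry actually produces is the composite of the two involutions, $(\iota\circ\iota^{\vee})^*=R_2R_3=(R_2R_1)^2$ (or its inverse), i.e.\ the \emph{square} of the dihedral translation. Note that $R_3$ sends $\Cone(\alpha_1^{\vee},\alpha_1)$ to $\Cone(\alpha_2,\alpha_3)$ while fixing $\Cone(\alpha_1,\alpha_2)$, and $R_2$ fixes $\Cone(\alpha_2^{\vee},\alpha_1^{\vee})$; so the group your isomorphisms generate on $\Pic(F)$ is the dihedral group $\langle R_2,R_3\rangle$, and identifying the resulting action with $\langle R_1R_2\rangle$ (in particular, producing the asserted isomorphism $F_1^{\vee}\simeq F_1$, which would require realizing $R_1$ or $R_1R_2$) is exactly the bookkeeping you cannot wave through with the sentence ``composed appropriately with the flop transition''; this is the delicate point you yourself flagged, and as written your argument for it fails.

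Second, the tiling step is misargued. $\Cone(\alpha_1^{\vee},\alpha_1)$ is \emph{not} a fundamental chamber for $\Gamma$: $R_1$ fixes $g$, an interior ray of that cone, so it stabilizes the cone, and the $\Gamma$-orbit of $\Cone(\alpha_1^{\vee},\alpha_1)$ consists only of every second chamber (the nef cones of the models isomorphic to $F$); the chambers $\Cone(\alpha_1,\alpha_2)$, etc.\ form a second orbit, so ``$\Gamma$-translates of the single chamber exhaust $\cP$'' is false (the mirrors of $R_1,R_2$ are the rays through $g$ and $\tau$, and the actual fundamental domain is $\Cone(g,\tau)$). The correct reason the nef cones of the successive models cover the positive cone is more elementary: the walls $\alpha_j,\alpha_j^{\vee}$ are the $\langle R_1R_2\rangle$-orbits of $\alpha_1,\alpha_1^{\vee}$, and since $R_1R_2$ is a hyperbolic isometry of the rank-two lattice $J_{12}$ its orbit rays accumulate only on the two isotropic boundary rays of $\cP$; because $J_{12}$ represents neither $0$ nor $-2$, no other walls intervene, so the consecutive cones $\ldots,\Cone(\alpha_1^{\vee},\alpha_1),\Cone(\alpha_1,\alpha_2),\ldots$ have disjoint interiors and fill $\cP$. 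With these two corrections the assembly goes through as in the paper.
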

\begin{figure}[h]
\begin{center}
\includegraphics{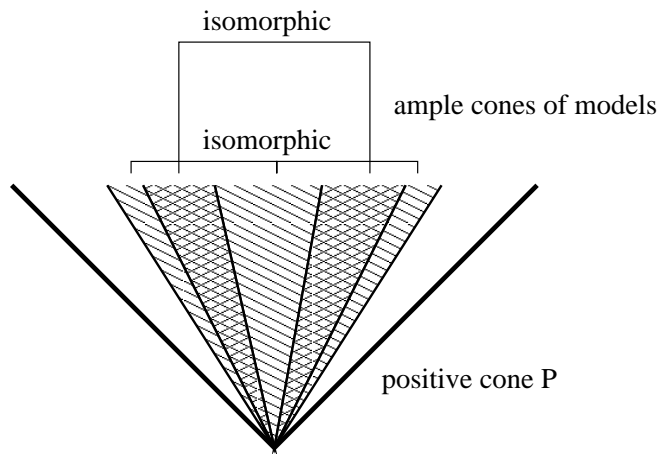}
\end{center}
\caption{Schematic illustration of the partition of the positive cone 
into ample cones for isomorphism classes of minimal models}
\end{figure}
This verifies the conjectures of \cite{HT01} for the birational models of $F$.  
It also illustrates the Finiteness of Models Conjecture of Kawamata and Morrison--here
we have two birational models up to isomorphism.

\begin{rema}[Application to rational points]
Let $F$ be the variety of lines of a cubic fourfold $X$ containing
a cubic scroll $T$, both defined over a field $k$.  Assume that the
hyperplane section containing $T$ has precisely six ordinary double
points in linear general position and $X$ does not contain a plane.
Then $k$-rational points on $F$ are Zariski dense.  Indeed, the 
infinite collection of Lagrangian planes defined over $k$ is 
Zariski dense.  

If the Picard group of $F$ has rank two then
\begin{itemize}
\item $F$ does not admit regular automorphisms, and
\item $F$ is not birational to an abelian fibration.
\end{itemize}

Potential density of rational points on varieties of lines on {\em generic}
cubic fourfolds over number fields has recently been established in \cite{AV}.  
\end{rema}

\bibliographystyle{plain}
\bibliography{flopping}

\end{document}